\newtheorem{theorem}{Theorem}[section]
\newtheorem{conjecture}[theorem]{Conjecture}
\newtheorem{definition}[theorem]{Definition}
\newtheorem{example}[theorem]{Example}
\newtheorem{problem}[theorem]{Problem}
\newtheorem{remark}[theorem]{Remark}
\title{This is the title}
\begin{document}
\begin{center}
{\bf{FEICHTINGER CONJECTURES, $R_\varepsilon$-CONJECTURE AND WEAVER'S CONJECTURES  FOR  BANACH SPACES}}\\
\textbf{K. MAHESH KRISHNA} \\
Post Doctoral Fellow\\
Statistics and Mathematics Unit\\
Indian Statistical Institute, Bangalore Centre\\
Karnataka 560 059 India\\
Email: kmaheshak@gmail.com \\

Date: \today
\end{center}

\hrule
\vspace{0.5cm}
\textbf{Abstract}: Motivated from  two decades old famous Feichtinger conjectures for frames, $R_\varepsilon$-conjecture and Weaver's conjecture  for Hilbert spaces (and their solution by Marcus, Spielman, and Srivastava), we formulate Feichtinger conjectures for p-approximate Schauder frames, $R_\varepsilon$-conjecture,  Weaver's conjectures and Akemann-Weaver conjectures   for Banach  spaces. We also formulate  conjectures on p-approximate Schauder frames based on the results  of Casazza for frames for Hilbert spaces. We state conjectures and problems for p-approximate Schauder frames based on fundamental inequality for frames for Hilbert spaces and scaling problem for Hilbert space frames. Based on Kothe-Lorch theorem for Riesz bases for Hilbert spaces, we formulate a problem for p-approximate Riesz bases for Banach spaces. We formulate dynamical sampling problem for p-approximate Schauder frames for Banach spaces. We ask phase retrieval problem and norm retrieval problem for p-approximate Schauder frames for Banach spaces. We also formulate discretization problem for continuous p-approximate Schauder frames.

\textbf{Keywords}: Feichtinger conjectures, p-approximate Schauder Frame, p-approximate Riesz basis,  p-approximate Bessel sequence, p-orthonormal basis.

\textbf{Mathematics Subject Classification (2020)}: 42C15, 11K38, 46B45, 46B04, 94A20, 46L05, 14Q15.\\
\hrule
\tableofcontents
\section{Introduction}
Let    $\mathcal{H}$ be a Hilbert space. Recall that a sequence  $\{\tau_n\}_n$ in  $\mathcal{H}$ is said to be a \textbf{frame} for $\mathcal{H}$  \cite{OLEBOOK, DUFFIN, HEILFRAME} if there exist $a,b>0$ such that 
\begin{align}\label{FE}
	a\|h\|^2 \leq \sum_{n=1}^\infty |\langle h, \tau_n\rangle|^2\leq b\|h\|^2, \quad \forall h \in \mathcal{H}.
\end{align}
Constants $a$ and $b$ are called as \textbf{lower frame bound} and \textbf{upper frame bound}, respectively. If 
\begin{align*}
	\sum_{n=1}^\infty |\langle h, \tau_n\rangle|^2=\|h\|^2, \quad \forall h \in \mathcal{H},
\end{align*}
then we say $\{\tau_n\}_n$ is a Parseval frame for   $\mathcal{H}$. If we do not demand the first inequality in  Inequality (\ref{FE}), then we say that  $\{\tau_n\}_n$ is a \textbf{Bessel sequence} for   $\mathcal{H}$. Constant  $b$ is called as \textbf{Bessel bound}.
Notion which is stronger than that of frames is that of Riesz bases defined as follows.
\begin{definition} \label{RIESZ}\cite{BARI, OLEBOOK, BARI2}
	\begin{enumerate}[\upshape(i)]
		\item 	A sequence $\{\tau_n\}_n$ in  $\mathcal{H}$ is said to be a \textbf{Riesz basis} for 
		$\mathcal{H}$ if there exists   a bounded invertible linear operator $T: \mathcal{H}\to \mathcal{H}$ such that 
		\begin{align*}
			T\omega_n=\tau_n, \quad \forall n \in \mathbb{N}, 
		\end{align*}
	where $\{\omega_n\}_n$ is an orthonormal basis for    $\mathcal{H}$.
		\item 	A sequence $\{\tau_n\}_n$ in  $\mathcal{H}$ is said to be a \textbf{Riesz sequence}  for 
		$\mathcal{H}$ if it is a Riesz basis for $\overline{span}\{\tau_n\}_n$. Constants $a,b>0$ satisfying  for all $m\in \mathbb{N}$, 
		\begin{align*}
			a\left(\sum_{n=1}^{m}|c_n|^2 \right)^\frac{1}{2}\leq \left\|\sum_{n=1}^{m}c_n\tau_n\right\|\leq b\left(\sum_{n=1}^{m}|c_n|^2 \right)^\frac{1}{2}\quad , \forall c_1, \dots, c_m \in\mathbb{R} \text{ or } \mathbb{C}
		\end{align*}
		are called as \textbf{lower Riesz bound} and \textbf{upper Riesz  bound}, respectively.
	\end{enumerate}
\end{definition}
Towards the end of $20^{th}$ century, works of Casazza, Christensen, and Lindner \cite{CASAZZACHRISTENSEN1996, CHRISTENSEN1996, CHRISTENSENLINDNER, CASAZZACHRISTENSEN1998, CASAZZALOCAL}      showed that there are frames which do not contain Schauder basis, in particular Riesz basis. On the other hand, work on Gabor frames led Feichtinger to formulate the following conjectures in the beginning years of $21^{th}$ century (see \cite{CHRISTENSENSEVEN} for the history).
\begin{conjecture}\cite{CASAZZACHRISTENSENVER}\textbf{(Feichtinger conjecture for frames)}\label{FCON}
\textbf{Let  $\{\tau_n\}_n$ be  a frame for   $\mathcal{H}$ such that 
	\begin{align*}
		0<\inf_{n\in \mathbb{N}}\|\tau_n\|.
	\end{align*}
	Then $\{\tau_n\}_n$ can be partitioned into a finite union of Riesz sequences. Moreover, what is the number of partitions required?}
\end{conjecture}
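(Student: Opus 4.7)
The plan is to prove this via the Marcus--Spielman--Srivastava (MSS) resolution of the Kadison--Singer problem, combined with the chain of equivalences due to Casazza--Tremain relating Feichtinger, the paving conjecture, and Weaver's $KS_2$.

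First I would reduce to a unit-norm Bessel sequence. Since $\inf_n \|\tau_n\| > 0$ and the frame has an upper bound $b$, set $v_n := \tau_n/\|\tau_n\|$; this is a unit-norm Bessel sequence with bound $B := b/\inf_n \|\tau_n\|^2$. A partition of $\{v_n\}$ into Riesz sequences induces one of $\{\tau_n\}$ with Riesz bounds simply multiplied by $\inf_n \|\tau_n\|$ and $\sup_n \|\tau_n\|$, so it suffices to prove the conjecture for unit-norm Bessel sequences.

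Next I would invoke the following consequence of MSS (Weaver's $KS_r$): there exist universal constants $C,c>0$ such that every unit-norm Bessel sequence with bound $B$ can be partitioned into at most $\lceil CB^2\rceil$ subsets, each of which is a Riesz sequence with lower bound $\geq c$. I would derive this consequence by first passing, via the synthesis operator and a standard exhaustion argument, from $\mathbb{N}$ to finite index sets and from unit-norm Bessel to finite-dimensional Parseval frames $\{w_i\}$ with small norms $\|w_i\|^2\leq \delta$. MSS's theorem then supplies a partition into $r$ parts with $\bigl\|\sum_{i\in T_j} w_i w_i^*\bigr\|\leq (1/\sqrt{r}+\sqrt{\delta})^{2}<1$. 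The Casazza--Tremain equivalence between paving and Feichtinger then promotes this operator-norm bound, together with unit-normness of the vectors, into a two-sided Riesz bound on each part by extracting a block of the Gramian which is a small perturbation of the identity and hence invertible with bounded inverse.

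The main obstacle is the MSS theorem itself: it is proved by constructing an interlacing family of mixed characteristic polynomials of a random sum of rank-one matrices and controlling its largest root via the multivariate barrier method, machinery developed specifically to resolve Kadison--Singer and far beyond routine frame theory. Once this input is accepted, the answer to the second question in the conjecture is quantitative and governed by the MSS constants: the number of Riesz sequences needed is $O(B^2)$, depending only on the ratio of the upper frame bound to $\inf_n \|\tau_n\|^2$, though the known constants are certainly not sharp.
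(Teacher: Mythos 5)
There is nothing in the paper to compare your argument against: Conjecture \ref{FCON} is stated verbatim from the literature (cited to Casazza--Christensen--Vershynin) and is not proved anywhere in this paper. The paper's only remark on its status is the sentence in the introduction recording that the Feichtinger conjectures were resolved by Marcus, Spielman, and Srivastava via mixed characteristic polynomials; the author's purpose here is to transplant the statement to p-approximate Schauder frames, not to reprove it. So your proposal should be judged on its own, and on its own it is the standard derivation: normalize to a unit-norm Bessel sequence, invoke MSS/Weaver's $KS_r$, and convert the resulting operator-norm paving into two-sided Riesz bounds via the Gramian. That outline is correct and is exactly how the result is obtained in the Casazza--Tremain ``consequences'' literature, with the count of Riesz sequences indeed $O(B^2)$ in the Bessel bound of the normalized system.

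Two of your steps conceal real work and would need to be filled in for this to count as a proof rather than a roadmap. First, the passage ``from unit-norm Bessel to finite-dimensional Parseval frames with small norms'' is not automatic: a Bessel sequence need not span, so its frame operator need not be invertible, and one must either adjoin vectors or pass to $(S+\varepsilon I)^{-1/2}$ and then track how the lower Riesz bound survives the un-rescaling; moreover the lower bound on each block comes from paving the hollow Gramian $G - I$ (so that each diagonal block of $G$ is $I$ plus an off-diagonal part of norm $<1$), not directly from the bound $\bigl\|\sum_{i\in T_j} w_i w_i^*\bigr\| < 1$ on the frame operators of the parts. Second, the ``standard exhaustion argument'' returning from finite index sets to $\mathbb{N}$ requires that both the number of blocks and the lower Riesz bound be uniform over all finite truncations, followed by a compactness (K\"onig's lemma / pinball) argument to produce a single partition of $\mathbb{N}$; without that uniformity the finite-dimensional statement does not imply the infinite one. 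Neither issue is fatal --- both are handled in the cited literature --- but as written your proof delegates every load-bearing step, including MSS itself, to references.
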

\begin{conjecture}\cite{CASAZZACHRISTENSENVER}\label{FCB}\textbf{(Feichtinger conjecture for Bessel sequences)}
	\textbf{Let  $\{\tau_n\}_n$ be  a Bessel sequence  for   $\mathcal{H}$ such that 
		\begin{align*}
			0<\inf_{n\in \mathbb{N}}\|\tau_n\|.
		\end{align*}
		Then $\{\tau_n\}_n$ can be partitioned into a finite union of Riesz sequences. Moreover, what is the number of partitions required?}
\end{conjecture}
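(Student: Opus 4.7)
The plan is to reduce Conjecture \ref{FCB} to the solution of Weaver's $KS_r$-conjecture by Marcus, Spielman and Srivastava, since the latter (equivalent to the resolution of the Kadison–Singer problem) was shown by Casazza, Christensen, Lindner and Vershynin to imply the Feichtinger conjecture for Bessel sequences. So rather than attacking the partition directly, I would first recast the hypothesis as a statement about a sum of rank one positive operators on $\mathcal{H}$ and then invoke the paving theorem that MSS provide.

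First I would normalize. Let $\{\tau_n\}_n$ be a Bessel sequence with bound $b$ and write $\delta := \inf_n \|\tau_n\| > 0$. Rescaling by $1/\sqrt{b}$, I may assume $b = 1$, in which case the synthesis operator $T\colon \ell^2 \to \mathcal{H}$, $T e_n = \tau_n$, satisfies $\|T\| \leq 1$, and every vector $\eta_n := \tau_n / \|\tau_n\|$ has norm $1$. Passing from $\{\tau_n\}$ to $\{\eta_n\}$ costs only a factor $\delta^2$ on both sides of any Riesz inequality, so proving the conclusion for $\{\eta_n\}$ implies it for $\{\tau_n\}$. Letting $P_n$ be the rank one positive operator $\langle \cdot , \eta_n\rangle \eta_n / \|\tau_n\|^{-2}$, one checks
\begin{align*}
\sum_n \|\tau_n\|^2 P_n = \sum_n \langle \cdot, \tau_n\rangle \tau_n = TT^*,
\end{align*}
which is a bounded positive operator of norm $\leq 1$.

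Next I would apply the solution of Weaver's conjecture $KS_r$: for every $\varepsilon > 0$ there exists an integer $r = r(\varepsilon)$ such that whenever $\{v_i\}_{i \in I}$ is a finite or countable family of vectors in a Hilbert space satisfying $\|v_i\|^2 \leq \varepsilon$ and $\sum_i \langle \cdot , v_i\rangle v_i \leq I$, the index set $I$ admits a partition $I = I_1 \sqcup \cdots \sqcup I_r$ with
\begin{align*}
\Bigl\|\sum_{i \in I_k} \langle \cdot, v_i\rangle v_i\Bigr\| \leq 1 - c(\varepsilon), \qquad k = 1, \dots, r,
\end{align*}
for some explicit $c(\varepsilon)>0$. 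I would first split the index set $\mathbb{N}$ into blocks $B_1,\dots,B_N$ so that within each block $B_j$ the norms $\|\tau_n\|$ lie in a dyadic window, then rescale the vectors inside $B_j$ to bring them into the regime $\|v\|^2 \leq \varepsilon$, then apply $KS_r$ to partition each $B_j$ into $r$ pieces, and finally take the common refinement. The resulting partition has at most $Nr$ blocks, each of which is a Riesz sequence: the upper Riesz bound follows from the Bessel assumption, while the lower Riesz bound comes from the paving inequality via the standard identity $\|\sum c_n \eta_n\|^2 = \langle Q c, c\rangle$ with $Q$ the Gram matrix, together with a Neumann series argument showing that if the off-diagonal part of $Q$ is small, $Q$ is bounded below.

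The main obstacle is of course the $KS_r$ step itself, which is the content of the Marcus–Spielman–Srivastava paper and rests on the method of interlacing families of real stable polynomials, multivariate barrier functions, and the mixed characteristic polynomial bound. I would not try to reprove this; the contribution of the present argument is bookkeeping: the dyadic slicing that handles unequal norms, the verification that the paving constant $c(\varepsilon)$ transfers to a uniform lower Riesz bound, and the quantitative estimate on the number of pieces, which my argument gives as $N r(\varepsilon) = O\!\bigl(\log(b/\delta^2)\cdot r(\varepsilon)\bigr)$ for a suitable choice of $\varepsilon$ depending on the ratio $\delta^2/b$. This also answers the ``moreover'' part of the conjecture.
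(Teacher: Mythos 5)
First, a point of comparison: the paper does not prove this statement at all. Conjecture \ref{FCB} is recalled verbatim from the literature (with a citation) purely as motivation, and the author simply records later in the introduction that the Feichtinger conjectures were resolved by Marcus, Spielman and Srivastava. So there is no in-paper proof to measure your argument against; I can only assess your outline on its own terms.

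Your overall strategy (reduce to Weaver's $KS_r$ as proved by MSS) is the standard and correct one, but the step that is supposed to produce the \emph{lower} Riesz bound does not work as written. The conclusion of $KS_r$ that you invoke is one-sided: it yields a partition on which the frame operator $\sum_{i\in I_k}\langle\cdot, v_i\rangle v_i$ has \emph{small norm}, equivalently the Gram matrix $Q_k$ of the $k$-th piece satisfies $\|Q_k\|\le 1-c(\varepsilon)$. That is an upper bound on $Q_k$; it says nothing about $Q_k$ being bounded \emph{below}, and in particular it does not control the off-diagonal part of $Q_k$, which is exactly what your Neumann-series argument needs. (After your rescaling the diagonal of $Q_k$ has size about $\varepsilon$, so $\|Q_k\|\le 1-c$ is perfectly compatible with $Q_k$ having a nontrivial kernel, i.e.\ with the piece failing to be a Riesz sequence.) The missing ingredient is the passage from the one-sided Weaver statement to a genuine \emph{paving} of the hollow Gram matrix $G-I$ (equivalently, the two-sided variant $KS_r'$): one needs a partition with $\|P_{I_k}(G-I)P_{I_k}\|\le \varepsilon$, after which $P_{I_k}GP_{I_k}\ge(1-\varepsilon)P_{I_k}$ gives the lower Riesz bound. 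That implication ($KS_r\Rightarrow$ paving $\Rightarrow$ Feichtinger) is precisely the nontrivial bookkeeping carried out by Casazza and Tremain in their ``consequences of MSS'' paper, and your outline elides it. Two smaller defects: the operator you write as $P_n=\langle\cdot,\eta_n\rangle\eta_n/\|\tau_n\|^{-2}$ does not satisfy $\sum_n\|\tau_n\|^2P_n=TT^*$ (the exponents are off), and the dyadic slicing is unnecessary, since $\delta\le\|\tau_n\|\le\sqrt{b}$ already lets you normalize to unit norm in one step at the cost of the fixed factors $\delta$ and $\sqrt{b}$ in the resulting Riesz bounds.
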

\begin{conjecture}\cite{CASAZZACHRISTENSENVER}\textbf{(Finite dimensional Feichtinger conjecture for frames)}\label{FDFCF}
\textbf{Let 	 $\mathcal{H}$ be a $d$-dimensional Hilbert space. For every    real $b,c>0$, there exist a natural number $M(b,c)$,  a real $a(b,c)>0$ so that whenever $\{\tau_j\}_{j=1}^n$ is a frame   for   $\mathcal{H}$ with upper frame bound $b$ and $\|\tau_j\|\geq c$, $\forall 1\leq j \leq n$, then the set $\{1,2, \dots, n\}$ can be partitioned into sets $I_1,I_2,\dots,  I_{M(b,c)}$ so that for each $1\leq k \leq M(b,c)$, $\{\tau_j\}_{j\in I_k}$ is a Riesz sequence with lower Riesz bound $a(b,c)$ and upper Riesz bound $b$. Moreover, what is the number of partitions required?}
\end{conjecture}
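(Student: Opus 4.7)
The approach is to deduce this conjecture from Anderson's paving conjecture, which was resolved by Marcus, Spielman and Srivastava (MSS) via their solution of the Kadison--Singer problem. In the form I would invoke, their theorem provides, for every $B,\varepsilon>0$, an integer $r=r(B,\varepsilon)$ such that every self-adjoint matrix $T$ with zero diagonal and $\|T\|\le B$ admits a partition $I_1,\dots,I_r$ of its index set satisfying $\|Q_{I_k} T Q_{I_k}\|\le \varepsilon$ for every $k$, where $Q_{I_k}$ is the coordinate projection onto $I_k$. This paving statement is the one external input the whole argument rests on.

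The reduction itself is the classical diagonal-dominance trick. Starting from $\{\tau_j\}_{j=1}^n$, I form the Gram matrix $G=(\langle \tau_i,\tau_j\rangle)_{i,j=1}^n$, whose operator norm equals that of the synthesis operator and is therefore bounded by $b$. Its diagonal $D$ also satisfies $\|D\|\le b$, since each $\|\tau_j\|^2\le b$ (an immediate consequence of the upper Bessel bound). Hence the off-diagonal part $G-D$ satisfies $\|G-D\|\le 2b$. I then apply the paving statement to $G-D$ with target $\varepsilon:=c^2/2$, producing a partition into $M(b,c):=r(2b,\,c^2/2)$ blocks on each of which the restricted off-diagonal Gram mass has operator norm at most $c^2/2$.

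On each block $I_k$ the Riesz sequence bounds then follow by a direct computation. For any scalars $(c_j)_{j\in I_k}$,
\[
\Bigl\|\sum_{j\in I_k} c_j\tau_j\Bigr\|^2 \;=\; \sum_{j\in I_k}\|\tau_j\|^2\,|c_j|^2 \;+\; \bigl\langle (G-D)_{I_k}c,\,c\bigr\rangle \;\ge\; \Bigl(c^2 - \tfrac{c^2}{2}\Bigr)\sum_{j\in I_k}|c_j|^2,
\]
which yields lower Riesz bound $a(b,c):=c/\sqrt{2}$. The upper Riesz bound $\sqrt{b}$ is inherited from the Bessel bound restricted to the subset. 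Note that both $M(b,c)$ and $a(b,c)$ depend only on $b$ and $c$, not on $n$ or on the dimension $d$, as required.

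The main obstacle is the MSS input itself: the paving conjecture is deep and uses the method of interlacing families and expected characteristic polynomials. Once it is taken as a black box, the rest of the argument is entirely routine. Squeezing out explicit and near-optimal expressions for $M(b,c)$ would require tracing constants through the interlacing-polynomial machinery, producing a bound roughly polynomial in $b/c^2$; optimizing this is a separate effort I would not attempt in the sketch.
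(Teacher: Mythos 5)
The paper does not prove this statement at all: it is quoted verbatim as a conjecture from the Casazza--Christensen et al.\ literature, and the introduction merely records that the Feichtinger conjectures were resolved by Marcus, Spielman, and Srivastava. So there is no internal proof to compare against; your proposal has to be judged on its own terms, and on those terms it is essentially correct. The reduction you give is the classical one used to show that paving implies the finite-dimensional Feichtinger conjecture: bound the Gram matrix by the Bessel bound, pave the off-diagonal part $G-D$ to norm at most $c^2/2$, and read off the lower Riesz bound from diagonal dominance, with the constants $M(b,c)=r(2b,c^2/2)$ and $a(b,c)=c/\sqrt 2$ depending only on $b$ and $c$ because the MSS paving number depends only on the ratio of the norm bound to the target. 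Two small quibbles, neither fatal: the operator norm of $G$ is the \emph{square} of the synthesis operator norm (the conclusion $\|G\|\le b$ is still right, and in fact $0\le D\le G$ gives the sharper $\|G-D\|\le b$); and the conjecture as transcribed asks for upper Riesz bound $b$ while the natural bound coming from the frame inequality is $\sqrt b$ --- this is a normalization artifact of the statement itself, not a defect of your argument.
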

\begin{conjecture}\cite{CASAZZACHRISTENSENVER}\textbf{(Finite dimensional Feichtinger conjecture for Bessel sequences)}\label{FDFCB}
\textbf{Let 	 $\mathcal{H}$ be a $d$-dimensional Hilbert space. For every $b>0$, there exists a natural number $M(b)$ and a real  $a(b)>0$ so that for every (Bessel) sequence $\{\tau_j\}_{j=1}^n$ for $\mathcal{H}$  with Bessel bound $b$ and $\|\tau_j\|=1$, $\forall 1\leq j \leq n$, can be written as a union of $M(b)$ Riesz  sequences each with lower Riesz bound $a(b)$. Moreover, what is the number of partitions required?}
\end{conjecture}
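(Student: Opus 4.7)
The plan is to reduce the conjecture to the quantitative paving theorem of Marcus, Spielman, and Srivastava, which is the finite-dimensional operator-theoretic form of their solution to the Kadison--Singer problem. First I form the $n\times n$ Gram matrix $G$ of the sequence, $G_{jk}=\langle \tau_{k},\tau_{j}\rangle$; the unit-norm hypothesis makes every diagonal entry of $G$ equal to $1$, and the Bessel bound translates into $\|G\|\leq b$ via $G=UU^{*}$, where $U$ is the analysis operator and $\|U\|^{2}\leq b$. The key elementary observation is that $\{\tau_{j}\}_{j\in I}$ is a Riesz sequence with lower Riesz bound $a$ precisely when the principal submatrix $G|_{I\times I}$ has smallest eigenvalue at least $a^{2}$, since $\|\sum_{j\in I} c_{j}\tau_{j}\|^{2} = c^{*} G|_{I\times I}\, c$ for every choice of scalars $\{c_j\}_{j\in I}$.

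Next, I would set $A := G - I_{n}$. Then $A$ is Hermitian, has zero diagonal, and $\|A\|\leq b+1$. Applying the MSS paving theorem to the rescaled matrix $A/(b+1)$ yields, for every $\varepsilon>0$, a universal integer $r(\varepsilon)$ and a partition $\{1,\dots,n\}=I_{1}\cup\cdots\cup I_{r(\varepsilon)}$ such that $\|P_{I_{k}}AP_{I_{k}}\|\leq \varepsilon(b+1)$ for each $k$. Choosing $\varepsilon = 1/(2(b+1))$, each principal submatrix $G|_{I_{k}\times I_{k}} = I + P_{I_{k}}AP_{I_{k}}$ has spectrum contained in $[\tfrac{1}{2},\, \tfrac{3}{2}]$. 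Consequently $\{\tau_{j}\}_{j\in I_{k}}$ is a Riesz sequence with lower Riesz bound $1/\sqrt{2}$ (the upper Riesz bound can be taken as $\sqrt{3/2}$, or more crudely $\sqrt{b}$ directly from the Bessel hypothesis), so the statement holds with $M(b)=r\bigl(1/(2(b+1))\bigr)$ and $a(b)=1/\sqrt{2}$.

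The core difficulty is the MSS paving theorem itself, which rests on the interlacing families and mixed characteristic polynomial machinery, and which I would invoke as a black box; I would not attempt to reprove it. The genuinely problem-specific work is the Gram-matrix reformulation of the Riesz-sequence condition and the observation that a small-norm paving of the off-diagonal matrix $G-I$ translates directly into uniform lower Riesz bounds for the partitioned subsequences. Finally, tracking the MSS constants through the rescaling by $b+1$ yields an explicit quantitative estimate for $M(b)$, answering the ``number of partitions required'' half of the conjecture.
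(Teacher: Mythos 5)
Your reduction is correct, and it is worth noting up front that the paper itself contains no proof of this statement: Conjecture \ref{FDFCB} is simply recalled from the literature as one of the original Feichtinger conjectures, with its resolution attributed to Marcus, Spielman, and Srivastava via Weaver's conjecture. What you have written is the standard (and sound) route from the MSS paving theorem to the finite-dimensional Feichtinger conjecture for unit-norm Bessel sequences. The two problem-specific steps check out: with $G_{jk}=\langle \tau_k,\tau_j\rangle$ one has $c^{*}\,G|_{I\times I}\,c=\bigl\|\sum_{j\in I}c_j\tau_j\bigr\|^{2}$, so a lower Riesz bound $a$ for $\{\tau_j\}_{j\in I}$ is exactly the condition $G|_{I\times I}\geq a^{2}I$ (the upper Riesz bound being free from the Bessel hypothesis); and $A=G-I_n$ is Hermitian with zero diagonal and $\|A\|\leq b+1$ (in fact $\|A\|\leq\max\{1,b-1\}$, since $0\leq G\leq bI$), so the paving theorem with $\varepsilon=1/(2(b+1))$ forces every paved block of $G$ to have spectrum in $[\tfrac12,\tfrac32]$, giving $a(b)=1/\sqrt{2}$ uniformly and $M(b)=r\bigl(1/(2(b+1))\bigr)$, e.g.\ $M(b)=O\bigl((b+1)^{4}\bigr)$ using the quantitative bound $r(\varepsilon)=(6/\varepsilon)^{4}$ for Hermitian pavings. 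Two cosmetic remarks: the paving theorem is usually stated homogeneously as $\|P_{I_k}AP_{I_k}\|\leq\varepsilon\|A\|$, so the explicit rescaling by $b+1$ is unnecessary; and the hypothesis is vacuous for $b<1$, since a unit vector already forces Bessel bound at least $1$. Neither affects correctness.
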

After the formulation of Conjectures  \ref{FCON}, \ref{FCB}, \ref{FDFCF} and  \ref{FDFCB}, several of their  equivalent conjectures  were  found and several particular cases  of Conjectures \ref{FCON}, \ref{FCB},  \ref{FDFCF} and  \ref{FDFCB}, have been solved \cite{BALANCASAZZAHEILLANDAU, CASAZZAKUTYNIOKSPEEGLE, PAULSEN, WEBER, LAWTON, BOWNIKCASAZZA2019, LATA, CASAZZACHRISTENSENVER, GAVRUTA, BARANOVDYAKONOV, BOWNIKSPEEGLE,  GROCHENIG}. Conjectures \ref{FCON}, \ref{FCB},  \ref{FDFCF} and  \ref{FDFCB} received a lot of importance after establishing their  equivalence  with Kadison-Singer conjectures  \cite{KADISONSINGER, CASAZZABOOK, CASAZZATREMAINSMALL, CASAZZAEDIDIN, CASAZZATREMAINLARGE, CASAZZAEDIDINEQUI}. Finally, the Feichtinger conjectures have been solved fully  resolving Weaver's conjecture by Marcus, Spielman, and Srivastava in 2013 using an entirely new method called ``mixed characteristic polynomials" by them  \cite{MARCUSSPIELMANSRIVASTAVA2, BOWNIK, CASAZZACONSEQUENCES, MARCUSSPIELMANSRIVASTAVA, TIMOTINTH, MARCUSCD}. Later, Conjecture \ref{FCON} has been set for tight p-frames for $\ell^p(\mathbb{N})$ in \cite{LIULIU} (see \cite{CHRISTENSENSTOEVA2003, STOEVA2006, STOEVA2005, CASAZZACHRISTESENSCH, CASAZZACHRISTENSENSTOEVA, ALDROUBI} for p-frames)    and solved by using Clarkson's inequalities \cite{CLARKSON} in the same paper. In this paper, we formulate  Conjectures \ref{FCON}, \ref{FCB},  \ref{FDFCF} and  \ref{FDFCB} \ref{FCON} for p-approximate Schauder frames for Banach spaces. We also formulate $R_\varepsilon$-conjecture,   Weaver's conjectures and Akemann-Weaver conjectures  for Banach spaces. Three  conjectures based on results of Casazza are formulated. We state scaling problem, fundamental inequality problem, Kothe-Lorch problem  for p-approximate Schauder frames for Banach spaces. We formulate dynamical sampling problem and phase and norm retrieval problems for p-approximate Schauder frames. We also state some other problems including discretization problem for continuous p-approximate Schauder frames for Banach spaces.

\section{Feichtinger conjectures,  $R_\varepsilon$-conjecture, Weaver's conjectures,  scaling problem, dynamical sampling problems,  phase and norm retrieval problems and  discretization problem for p-approximate Schauder frames for Banach spaces}
 Let  $p\in[1, \infty)$. Let $\{e_n\}_n$ be the standard Schauder basis for $\ell^p(\mathbb{N})$  and $\{\zeta_n\}_n$ be the co ordinate functionals associated with $\{e_n\}_n$. Throughout the paper, $\mathcal{X}$ denotes a Banach space. In the process of characterization of approximate Schauder frames and its duals (like that of Holub \cite{HOLUB}  and Li \cite{LI}) for Banach spaces \cite{FREEMANODELL, THOMAS, CASAZZA, CASAZZAHANLARSON}, the notion of p-approximate Schauder frames has been introduced by Krishna and Johnson whose detailed study has been done in the thesis of the  author \cite{MAHESHTHESIS} which reads as follows. 
\begin{definition}\cite{MAHESHJOHNSON, MAHESHJOHNSON3}
Let  $p\in[1, \infty)$.	Let $\{\tau_n\}_n$ be a sequence in a Banach space  $\mathcal{X}$ and 	$\{f_n\}_n$ be a sequence in  $\mathcal{X}^*$ (dual of  $\mathcal{X}$). The pair $ (\{f_n \}_{n}, \{\tau_n \}_{n}) $ is said to be a \textbf{p-approximate Schauder frame (p-ASF)} for $\mathcal{X}$ if the following conditions are satisfied.
	\begin{enumerate}[\upshape(i)]
		\item The map (\textbf{frame operator})
		\begin{align*}
			S_{f, \tau}:\mathcal{X}\ni x \mapsto S_{f, \tau}x\coloneqq \sum_{n=1}^\infty
			f_n(x)\tau_n \in
			\mathcal{X}
		\end{align*}
		is a well-defined bounded linear, invertible operator.	
		\item The map  (\textbf{analysis operator})
		\begin{align*}
\theta_f: \mathcal{X}\ni x \mapsto \theta_f x\coloneqq \{f_n(x)\}_n \in \ell^p(\mathbb{N})
		\end{align*}
		is a well-defined bounded linear  operator.
		\item The map  (\textbf{synthesis operator})
		\begin{align*}
		\theta_\tau : \ell^p(\mathbb{N}) \ni \{a_n\}_n \mapsto \theta_\tau \{a_n\}_n\coloneqq \sum_{n=1}^\infty a_n\tau_n \in \mathcal{X}	
		\end{align*}
		is a well-defined bounded linear  operator.
	\end{enumerate}
Constants $a>0$ and $b>0$ satisfying 
\begin{align*}
a\|x\|\leq \|S_{f,\tau}	x\|\leq b \|x\|, \quad \forall x \in \mathcal{X},
\end{align*}
are called as \textbf{lower frame bound} and \textbf{upper frame bound}, respectively. If $S_{f,\tau}x=x$, $\forall x \in \mathcal{X}$, then we say that $ (\{f_n \}_{n}, \{\tau_n \}_{n}) $ is a Parseval p-ASF for $\mathcal{X}$. If $S_{f,\tau}x=\lambda x$, for some nonzero scalar $\lambda$, $\forall x \in \mathcal{X}$, then we say that $ (\{f_n \}_{n}, \{\tau_n \}_{n}) $ is a tight p-ASF for $\mathcal{X}$. If we do not demand  condition (i), then we say that $ (\{f_n \}_{n}, \{\tau_n \}_{n}) $ is a \textbf{p-approximate Bessel sequence (p-ABS)} for $\mathcal{X}$. Constant  $b$ is called as \textbf{Bessel bound}.
\end{definition}
Now we define the notion of p-orthonormal sequence and basis for  Banach spaces. Our basic motivation is the notion of orthonormal basis for Hilbert spaces and the standard canonical Schauder basis for $\ell^p(\mathbb{N})$.
\begin{definition}
Let  $p\in[1, \infty)$. Let $\mathbb{M} \subseteq\mathbb{N}$. 	Let $\{\tau_n\}_{n\in \mathbb{M}}$ be a sequence in  $\mathcal{X}$ and 	$\{f_n\}_{n\in \mathbb{M}}$ be a sequence in  $\mathcal{X}^*$. The pair $ (\{f_n \}_{n\in \mathbb{M}}, \{\tau_n \}_{n\in \mathbb{M}}) $ is said to be a \textbf{p-orthonormal sequence} for $\mathcal{X}$ if the following conditions are satisfied.	
	\begin{enumerate}[\upshape(i)]
		\item $f_n(\tau_m)=\delta_{n,m}$, $\forall n, m\in \mathbb{M}$. 
		\item For each $x\in \mathcal{X}$, 
		\begin{align*}
			\|x\|^p\geq \sum_{n\in \mathbb{M}}|f_n(x)|^p.
		\end{align*}
		\item For each $\{a_n\}_{n\in \mathbb{M}}\in \ell^p(\mathbb{M})$, 
		\begin{align*}
			\left\|\sum_{{n\in \mathbb{M}}} a_n \tau_n\right\|^p=\sum_{{n\in \mathbb{M}}}|a_n|^p.
		\end{align*}
	\end{enumerate}
\end{definition}
\begin{definition}\label{PRIESZ}
Let  $p\in[1, \infty)$.	Let $\{\tau_n\}_n$ be a sequence in  $\mathcal{X}$ and 	$\{f_n\}_n$ be a sequence in  $\mathcal{X}^*$.		The pair $ (\{f_n \}_{n}, \{\tau_n \}_{n}) $ is said to be a \textbf{p-orthonormal basis}   for $\mathcal{X}$ if the following conditions hold.
\begin{enumerate}[\upshape(i)]
	\item $\{\tau_n\}_n$ is  a Schauder basis for   $\mathcal{X}$.
	\item $f_n(\tau_m)=\delta_{n,m}$, $\forall n, m\in \mathbb{N}$, i.e., 	$\{f_n\}_n$ is the coordinate functionals associated with   $\{\tau_n\}_n$
	\item For each $x\in \mathcal{X}$, 
	\begin{align*}
		\|x\|^p=\sum_{n=1}^\infty|f_n(x)|^p.
	\end{align*}
	\item For each $\{a_n\}_n\in \ell^p(\mathbb{N})$, 
	\begin{align*}
		\left\|\sum_{n=1}^\infty a_n \tau_n\right\|^p=\sum_{n=1}^\infty|a_n|^p.
	\end{align*}
\end{enumerate}
\end{definition}
It is clear that a p-orthonormal basis is a p-orthonormal sequence and every subset of p-orthonormal basis is a p-orthonormal sequence. Also note that condition (iv) in Definition \ref{PRIESZ} says that if $ (\{f_n \}_{n}, \{\tau_n \}_{n}) $ is a p-orthonormal basis   for $\mathcal{X}$, then  $\{\tau_n\}_n$ is an unconditional  Schauder basis for   $\mathcal{X}$. Definition \ref{PRIESZ} give the following observations: 
\begin{align*}
	\|\tau_n\|=1=\|f_n\|, \quad \forall n \in \mathbb{N},
\end{align*}
\begin{align*}
	\|\tau_{j_1}+\cdots+\tau_{j_n}\|=n^\frac{1}{p}, \quad \forall j_1, \dots, j_n.
\end{align*}
\begin{remark}
	Let $\mathcal{H}$ be a Hilbert space and $\{\tau_n\}_n$ be an orthonormal basis for $\mathcal{H}$. Define $f_n: \mathcal{H} \ni h \mapsto \langle h, \tau_n \rangle \in \mathbb{K}$, $\forall n$. Then $ (\{f_n \}_{n}, \{\tau_n \}_{n}) $ is a 2-orthonormal basis   for $\mathcal{X}$.
\end{remark}
\begin{example}
	Standard Schauder basis and its co ordinate functionals is a p-orthonormal basis for $\ell^p(\mathbb{N})$, for each $p\in [1,\infty)$.
\end{example}
Gram-Schmidt orthonormalization converts every linearly independent set in a Hilbert space into an orthonormal set and hence to an orthonormal basis \cite{LEONBJORCKGANDER, DEBANTH}. On the other hand, there is a Gram-Schmidt orthonormalization  in Banach spaces due to Lin \cite{LINGRAMS}. We ask the following open problem. 
\begin{problem}
	\textbf{Let $\{\tau_j\}_{j=1}^n$ be a linearly independent collection in  $\mathcal{X}$ and 	$\{f_j\}_{j=1}^n$ be a linearly independent collection  in  $\mathcal{X}^*$. Whether there is a way to convert $ (\{f_j \}_{j=1}^n, \{\tau_j \}_{j=1}^n) $ into a p-orthonormal sequence, say $ (\{g_j \}_{j=1}^n, \{\omega_j \}_{j=1}^n) $ such that
	\begin{align*}
		\operatorname{span}\{f_1, \dots, f_j \}=	\operatorname{span}\{g_1, \dots, g_j \}~ \text{ and/or  } 	\operatorname{span}\{\tau_1, \dots, \tau_j \}=	\operatorname{span}\{\omega_1, \dots, \omega_j \}, \quad \forall 1\leq j \leq n.
	\end{align*}
In particular, whether we can convert a $ (\{f_j \}_{j=1}^n, \{\tau_j \}_{j=1}^n) $ into a p-orthonormal basis?}
\end{problem}
Like orthonormal basis for Hilbert spaces, we can characterize all p-orthonormal bases for Banach spaces.
\begin{theorem}\label{PREVIUOS}
Let $ (\{f_n \}_{n}, \{\tau_n \}_{n}) $ be a p-orthonormal basis   for $\mathcal{X}$. Then all p-orthonormal bases for $\mathcal{X}$ are precisely the family $ (\{f_nV^{-1} \}_{n}, \{V\tau_n \}_{n}) $, where $V:\mathcal{X}\to \mathcal{X}$ is an invertible isometry.
\end{theorem}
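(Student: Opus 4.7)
The plan is to establish the two directions of the word ``precisely''.

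First I would verify the easy forward direction: suppose $V:\mathcal{X}\to\mathcal{X}$ is an invertible isometry, and check the four defining conditions of a p-orthonormal basis for the pair $(\{f_nV^{-1}\}_n,\{V\tau_n\}_n)$. That $\{V\tau_n\}_n$ is a Schauder basis for $\mathcal{X}$ is immediate since $V$ is a topological isomorphism. The biorthogonality condition follows from $(f_nV^{-1})(V\tau_m)=f_n(\tau_m)=\delta_{n,m}$. For the two norm identities I would simply transport the corresponding identities of $(\{f_n\}_n,\{\tau_n\}_n)$ via the isometry property of $V$, writing $\|x\|^p=\|V^{-1}x\|^p=\sum_n|f_n(V^{-1}x)|^p=\sum_n|(f_nV^{-1})(x)|^p$ and $\|\sum_n a_n V\tau_n\|^p=\|V\sum_n a_n\tau_n\|^p=\|\sum_n a_n\tau_n\|^p=\sum_n|a_n|^p$.

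For the reverse direction, let $(\{g_n\}_n,\{\omega_n\}_n)$ be an arbitrary p-orthonormal basis for $\mathcal{X}$. I would define a linear map $V$ on the dense subspace $\operatorname{span}\{\tau_n\}_n$ by $V(\sum_{n=1}^N a_n\tau_n):=\sum_{n=1}^N a_n\omega_n$, which is well-defined because $\{\tau_n\}_n$, being a Schauder basis, is linearly independent. Applying condition (iv) of Definition \ref{PRIESZ} to both p-orthonormal bases gives $\|\sum_n a_n\tau_n\|^p=\sum_n|a_n|^p=\|\sum_n a_n\omega_n\|^p$, so $V$ is isometric on $\operatorname{span}\{\tau_n\}_n$ and therefore extends uniquely to a bounded isometry $V:\mathcal{X}\to\mathcal{X}$ by continuity and completeness. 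Its image is closed, contains every $\omega_n$, and hence contains $\overline{\operatorname{span}}\{\omega_n\}_n=\mathcal{X}$, so $V$ is surjective, and thus an invertible isometry with $V\tau_n=\omega_n$.

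Finally I would identify the dual side by showing $g_n=f_nV^{-1}$ for every $n$. Both are bounded linear functionals on $\mathcal{X}$, and by biorthogonality they satisfy $g_n(\omega_m)=\delta_{n,m}=f_n(\tau_m)=(f_nV^{-1})(\omega_m)$ for all $m$. Since $\{\omega_n\}_n$ is a Schauder basis, every $x\in\mathcal{X}$ expands as $x=\sum_m g_m(x)\omega_m$ with convergence in norm, and applying each of $g_n$ and $f_nV^{-1}$ to this expansion yields $g_n(x)=(f_nV^{-1})(x)$. This gives the desired parametrization.

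The main obstacle I anticipate is the well-definedness and isometric extension of $V$ from $\operatorname{span}\{\tau_n\}_n$ to all of $\mathcal{X}$. This is not merely formal: it relies crucially on condition (iv) of Definition \ref{PRIESZ}, which is the strong (unconditional, p-norm) basis property holding simultaneously for both $\{\tau_n\}_n$ and $\{\omega_n\}_n$ with the \emph{same} value of $p$. Once that hurdle is cleared, surjectivity of $V$ and the identification of the dual sequences follow by standard soft arguments.
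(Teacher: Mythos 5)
Your proposal is correct and follows essentially the same route as the paper: both directions are handled by constructing the isometry $V$ with $V\tau_n=\omega_n$, using condition (iv) of Definition \ref{PRIESZ} to get the isometric property and biorthogonality plus the Schauder expansion to identify $g_n=f_nV^{-1}$. The only cosmetic difference is that you build $V$ on the dense span and extend by continuity (getting invertibility from isometry plus dense range), whereas the paper defines $V$ directly via the basis expansion and exhibits the explicit inverse $U$; both are standard and equivalent.
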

\begin{proof}
	(i) $\Rightarrow$ (ii) Let $ (\{g_n \}_{n}, \{\omega_n \}_{n}) $ be a p-orthonormal basis   for $\mathcal{X}$. Define 
	\begin{align*}
		V:\mathcal{X} \ni \sum_{n=1}^{\infty}f_n(x)\tau_n \mapsto \sum_{n=1}^{\infty}f_n(x)\omega_n \in \mathcal{X}
	\end{align*}
and 
	\begin{align*}
	U:\mathcal{X} \ni \sum_{n=1}^{\infty}g_n(x)\omega_n \mapsto \sum_{n=1}^{\infty}g_n(x)\tau_n \in \mathcal{X}.
\end{align*}
Then $U$ and $V$ are well-defined and 
\begin{align*}
UVx=U\left(\sum_{n=1}^{\infty}f_n(x)\omega_n\right)=\sum_{n=1}^{\infty}f_n(x)U\omega_n=\sum_{n=1}^{\infty}f_n(x)\tau_n=x, \quad \forall x \in \mathcal{X},
\end{align*}
\begin{align*}
VUx=V\left(\sum_{n=1}^{\infty}g_n(x)\tau_n\right)=\sum_{n=1}^{\infty}g_n(x)V \tau_n=\sum_{n=1}^{\infty}g_n(x)\omega_n=x, \quad \forall x \in \mathcal{X}	.
\end{align*}
Hence $V$ is invertible. Now 
\begin{align*}
	\|Vx\|=\left\|\sum_{n=1}^{\infty}f_n(x)\omega_n\right\|=\left(\sum_{n=1}^\infty|f_n(x)|^p\right)^\frac{1}{p}=\|x\|, \quad \forall x \in \mathcal{X}.
\end{align*}
 Hence $V$ is isometry. We now have $V\tau_n =\omega_n$, $\forall n \in \mathbb{N}$, and 
 \begin{align*}
 	f_n(V^{-1}x)=f_n(Ux)=f_n\left(\sum_{m=1}^{\infty}g_m(x)\tau_m\right)=\sum_{m=1}^{\infty}g_m(x)f_n(\tau_m)=g_n(x), \quad \forall n \in \mathbb{N}, \forall x \in \mathcal{X}.
 \end{align*}
(ii) $\Rightarrow$ (i) Define $g_n\coloneqq f_n V^{-1}, \omega_n\coloneqq V\tau_n$, $\forall n \in \mathbb{N}$. We claim that $ (\{g_n \}_{n}, \{\omega_n \}_{n}) $ is  a p-orthonormal basis   for $\mathcal{X}$. Since $V$ is invertible, $\{\omega_n\}_n$ is a Schauder basis for   $\mathcal{X}$. Now we verify remaining three conditions:
\begin{align*}
	g_n(\omega_m)=f_n(V^{-1}V\tau_m)=f_n(\tau_m)=\delta_{n,m}, \quad \forall n, m\in \mathbb{N},
\end{align*}
\begin{align*}
	\sum_{n=1}^\infty|g_n(x)|^p=	\sum_{n=1}^\infty|f_n(V^{-1}x)|^p=\|V^{-1}x\|^p=\|x\|^p, \quad \forall x \in \mathcal{X},
\end{align*}
\begin{align*}
	\left\|\sum_{n=1}^\infty a_n \omega_n\right\|^p&=	\left\|\sum_{n=1}^\infty a_n V\tau_n\right\|^p=\left\|V\left(\sum_{n=1}^\infty a_n \tau_n\right)\right\|^p\\
	&=\left\|\sum_{n=1}^\infty a_n \tau_n\right\|^p=\sum_{n=1}^\infty|a_n|^p, \quad 	\{a_n\}_n\in \ell^p(\mathbb{N}).
\end{align*}
\end{proof}
\begin{theorem}
If 	 $\mathcal{X}$ admits a p-orthonormal basis, then $\mathcal{X}$ is isometrically isomorphic to $\ell^p(\mathbb{N})$.
\end{theorem}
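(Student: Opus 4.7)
The plan is to exhibit an explicit isometric isomorphism $U:\mathcal{X}\to\ell^p(\mathbb{N})$ built from the given p-orthonormal basis $(\{f_n\}_n,\{\tau_n\}_n)$. Define
\begin{align*}
U:\mathcal{X}\ni x\mapsto \{f_n(x)\}_n.
\end{align*}
By condition (iii) of Definition \ref{PRIESZ}, $\sum_{n=1}^{\infty}|f_n(x)|^{p}=\|x\|^{p}<\infty$, so $U$ indeed lands in $\ell^{p}(\mathbb{N})$; linearity is immediate, and the same identity shows $\|Ux\|_{p}=\|x\|$, so $U$ is a linear isometry.

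For surjectivity, pick $\{a_n\}_n\in\ell^{p}(\mathbb{N})$. Applying condition (iv) to the tails $\{a_n\}_{n\ge N}$ (which lie in $\ell^{p}$) gives
\begin{align*}
\Bigl\|\sum_{n=N}^{M}a_n\tau_n\Bigr\|^{p}=\sum_{n=N}^{M}|a_n|^{p}\xrightarrow[N,M\to\infty]{}0,
\end{align*}
so the partial sums of $\sum_n a_n\tau_n$ form a Cauchy sequence in $\mathcal{X}$ and thus converge to some $x\in\mathcal{X}$. For any fixed $m$, continuity of $f_m$ together with the biorthogonality relation $f_m(\tau_n)=\delta_{m,n}$ yields $f_m(x)=a_m$, i.e. $Ux=\{a_n\}_n$. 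Hence $U$ is onto.

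Combining the two steps, $U$ is a surjective linear isometry, which is precisely the claim. I do not anticipate a genuine obstacle: the only point requiring any care is the convergence of $\sum_n a_n\tau_n$ for arbitrary $\{a_n\}_n\in\ell^{p}(\mathbb{N})$, and this is handed to us directly by condition (iv) of the definition of a p-orthonormal basis applied to finite partial sums.
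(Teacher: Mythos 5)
Your proof is correct and follows essentially the same route as the paper: the paper defines the coordinate map $x\mapsto\{f_n(x)\}_n$ together with the synthesis map $\{a_n\}_n\mapsto\sum_n a_n\tau_n$ as mutual inverses and deduces the isometry from condition (iii), which is exactly your argument with surjectivity phrased via the synthesis map. Your explicit Cauchy-tail justification of the convergence of $\sum_n a_n\tau_n$ is a detail the paper leaves implicit, but it is the same proof.
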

\begin{proof}
Let $ (\{f_n \}_{n}, \{\tau_n \}_{n}) $ be a p-orthonormal basis   for $\mathcal{X}$.  Define 
\begin{align*}
	V:\mathcal{X} \ni \sum_{n=1}^{\infty}f_n(x)\tau_n \mapsto \sum_{n=1}^{\infty}f_n(x)e_n \in \ell^p(\mathbb{N})
\end{align*}
and 
\begin{align*}
	U:\ell^p(\mathbb{N}) \ni \sum_{n=1}^{\infty}\zeta_n(x)e_n \mapsto \sum_{n=1}^{\infty}\zeta_n(x)\tau_n \in \mathcal{X}.
\end{align*}	
Other parts are similar to the proof of Theorem \ref{PREVIUOS}.
\end{proof}
\begin{theorem}
If a Banach space $\mathcal{X}$ is isometrically isomorphic to $\ell^p(\mathbb{N})$, then it admits a p-orthonormal basis.	
\end{theorem}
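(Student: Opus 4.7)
The plan is to transport the canonical $p$-orthonormal basis structure on $\ell^p(\mathbb{N})$ across the given isometric isomorphism, in essentially the reverse direction to the preceding theorem.

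Let $T:\ell^p(\mathbb{N})\to\mathcal{X}$ be an isometric isomorphism. First I would define the candidate system by setting $\tau_n\coloneqq Te_n$ and $f_n\coloneqq\zeta_n\circ T^{-1}$ for each $n\in\mathbb{N}$, where $\{e_n\}_n$ is the standard Schauder basis for $\ell^p(\mathbb{N})$ and $\{\zeta_n\}_n$ its coordinate functionals. Since $T$ is a bounded invertible linear operator and $\{e_n\}_n$ is a Schauder basis for $\ell^p(\mathbb{N})$, standard basis-transport yields that $\{\tau_n\}_n$ is a Schauder basis for $\mathcal{X}$, verifying condition (i) of Definition \ref{PRIESZ}.

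Next I would check condition (ii): for $x\in\mathcal{X}$ write $T^{-1}x=\sum_{m=1}^{\infty}\zeta_m(T^{-1}x)e_m$; applying $T$ gives $x=\sum_{m=1}^{\infty}\zeta_m(T^{-1}x)\tau_m$, so $f_n$ as defined is precisely the coordinate functional of $\{\tau_n\}_n$, and in particular
\begin{align*}
f_n(\tau_m)=\zeta_n(T^{-1}Te_m)=\zeta_n(e_m)=\delta_{n,m}, \quad \forall n,m\in\mathbb{N}.
\end{align*}
For condition (iii), using that $T^{-1}$ is an isometry,
\begin{align*}
\sum_{n=1}^{\infty}|f_n(x)|^p=\sum_{n=1}^{\infty}|\zeta_n(T^{-1}x)|^p=\|T^{-1}x\|^p=\|x\|^p,\quad \forall x\in\mathcal{X}.
\end{align*}
For condition (iv), using that $T$ is an isometry and linear,
\begin{align*}
\left\|\sum_{n=1}^{\infty}a_n\tau_n\right\|^p=\left\|T\left(\sum_{n=1}^{\infty}a_ne_n\right)\right\|^p=\left\|\sum_{n=1}^{\infty}a_ne_n\right\|^p=\sum_{n=1}^{\infty}|a_n|^p,\quad \forall\{a_n\}_n\in\ell^p(\mathbb{N}).
\end{align*}

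There is no serious obstacle here; the only mild point to be careful about is justifying that the series $\sum_n a_n \tau_n$ and $\sum_n f_n(x)\tau_n$ actually converge in $\mathcal{X}$, which follows from the continuity of $T$ applied to the convergent series in $\ell^p(\mathbb{N})$. This result is essentially the converse of the previous theorem and, together with it, shows that admitting a $p$-orthonormal basis is equivalent to being isometrically isomorphic to $\ell^p(\mathbb{N})$.
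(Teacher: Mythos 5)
Your proposal is correct and follows essentially the same route as the paper: the paper takes an isometric isomorphism $V:\mathcal{X}\to\ell^p(\mathbb{N})$ and sets $f_n\coloneqq\zeta_nV$, $\tau_n\coloneqq V^{-1}e_n$, which is exactly your construction with $V=T^{-1}$. The only difference is that the paper merely asserts the conclusion, whereas you spell out the verification of the four conditions of the definition, which is a welcome addition.
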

\begin{proof}
	Let $V: \mathcal{X} \to \ell^p(\mathbb{N})$ be an isometric isomorphism. Define $f_n\coloneqq \zeta _n V, \tau_n\coloneqq V^{-1}e_n$, $\forall n \in \mathbb{N}$. Then  $ (\{f_n \}_{n}, \{\tau_n \}_{n}) $ is  a p-orthonormal basis   for $\mathcal{X}$. 
\end{proof}
Now we want to define the notion of Riesz basis in accordance with Definition \ref{RIESZ}. We note that Riesz basis notion has been defined for Banach spaces  using a single sequence in \cite{CHRISTENSENSTOEVA2003, ALDROUBI}  which we do not consider in this paper. Our motivation comes from the definition given in \cite{MAHESHJOHNSON2}.
\begin{definition}\label{PRIESZDEFINITION}
Let  $p\in[1, \infty)$.	Let $\{\tau_n\}_n$ be a sequence in  $\mathcal{X}$ and 	$\{f_n\}_n$ be a sequence in  $\mathcal{X}^*$.	The pair $ (\{f_n \}_{n}, \{\tau_n \}_{n}) $ is said to be a \textbf{p-approximate Riesz basis}   for $\mathcal{X}$ if there exist bounded linear invertible operators $U,V:\mathcal{X}\to \mathcal{X}$ such that 
	\begin{align*}
		f_n=g_n U,~	\tau_n=V\omega_n, \quad \forall n \in \mathbb{N}, 
	\end{align*}
where $(\{g_n\}_n, \{\omega_n\}_n)$  is a p-orthonormal basis    for $\mathcal{X}$.
\end{definition}
\begin{definition}
Let  $p\in[1, \infty)$. Let $\{\tau_n\}_n$ be a sequence in  $\mathcal{X}$ and 	$\{f_n\}_n$ be a sequence in  $\mathcal{X}^*$.	The pair $ (\{f_n \}_{n}, \{\tau_n \}_{n}) $ is said to be a \textbf{p-approximate Riesz sequence}    for $\mathcal{X}$ if	$ (\{f_n \}_{n}, \{\tau_n \}_{n}) $ is a  p-approximate Riesz basis    for $\overline{span}\{\tau_n\}_n$. Constants $a,b>0$ satisfying  for all $m\in \mathbb{N}$, 
\begin{align*}
	a\left(\sum_{n=1}^{m}|c_n|^p \right)^\frac{1}{p}\leq \left\|\sum_{n=1}^{m}c_n\tau_n\right\|\leq b\left(\sum_{n=1}^{m}|c_n|^p \right)^\frac{1}{p},\quad  \forall c_1, \dots, c_m \in \mathbb{K}
\end{align*}
are called as \textbf{lower p-approximate Riesz bound} and \textbf{upper p-approximate Riesz  bound}, respectively. Note that they exist.  We say that $ (\{f_n \}_{n}, \{\tau_n \}_{n}) $ is unit norm if $\|f_n\|=\|\tau_n\|=|f_n(\tau_n)|=1$, $\forall n\in \mathbb{N}$.
\end{definition}
 Note that Definition  \ref{PRIESZDEFINITION} of p-approximate Riesz basis is not the same as the definition of p-approximate Riesz basis given in \cite{MAHESHJOHNSON2}. The following theorem says that they are equivalent.  
 In the paper, given a space $\mathcal{X}$, by $I_\mathcal{X}$ we mean the identity operator on $\mathcal{X}$.
  \begin{theorem}\label{RIESZCHAR}
  	The following are equivalent.
  	\begin{enumerate}[\upshape(i)]
  		\item The pair $ (\{f_n \}_{n}, \{\tau_n \}_{n}) $ is  a p-approximate Riesz basis   for $\mathcal{X}$.
  		\item The pair $ (\{f_n \}_{n}, \{\tau_n \}_{n}) $ is  a p-ASF   for $\mathcal{X}$ and 
  		\begin{align}\label{RIESZEQUATION}
  		\theta_fS_{f,\tau}^{-1}\theta_\tau=I_{\ell^p(\mathbb{N})}.	
  		\end{align}
  	\end{enumerate}
  \end{theorem}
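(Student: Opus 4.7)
The approach is to prove each direction separately, leveraging the algebraic relationships among the analysis, synthesis, and frame operators of a p-orthonormal basis.

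For (i) $\Rightarrow$ (ii), I would unpack Definition \ref{PRIESZDEFINITION} to obtain invertible operators $U, V\colon \mathcal{X} \to \mathcal{X}$ and a p-orthonormal basis $(\{g_n\}_n, \{\omega_n\}_n)$ of $\mathcal{X}$ with $f_n = g_n U$ and $\tau_n = V\omega_n$ for every $n$. A direct check yields $\theta_f = \theta_g U$ and $\theta_\tau = V\theta_\omega$, so
\begin{align*}
S_{f,\tau} = \theta_\tau\theta_f = V\theta_\omega\theta_g U.
\end{align*}
The biorthogonality and Schauder basis conditions of Definition \ref{PRIESZ} give $\theta_\omega\theta_g = I_{\mathcal{X}}$ and $\theta_g\theta_\omega = I_{\ell^p(\mathbb{N})}$, so $S_{f,\tau} = VU$ is invertible, establishing the p-ASF property. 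Substituting,
\begin{align*}
\theta_f S_{f,\tau}^{-1}\theta_\tau = \theta_g U(VU)^{-1}V\theta_\omega = \theta_g\theta_\omega = I_{\ell^p(\mathbb{N})}.
\end{align*}

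For (ii) $\Rightarrow$ (i), the key observation is that $\theta_\tau$ is itself invertible. Indeed, Equation (\ref{RIESZEQUATION}) exhibits $\theta_f S_{f,\tau}^{-1}$ as a left inverse of $\theta_\tau$, while $S_{f,\tau} = \theta_\tau\theta_f$ gives $\theta_\tau(\theta_f S_{f,\tau}^{-1}) = S_{f,\tau}S_{f,\tau}^{-1} = I_{\mathcal{X}}$, so the same operator is also a right inverse. Hence $\theta_\tau\colon \ell^p(\mathbb{N}) \to \mathcal{X}$ is a bounded linear bijection with $\theta_\tau^{-1} = \theta_f S_{f,\tau}^{-1}$, and $\theta_f = \theta_\tau^{-1}S_{f,\tau}$ is likewise invertible. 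Fixing any p-orthonormal basis $(\{g_n\}_n, \{\omega_n\}_n)$ of $\mathcal{X}$, I would set $V \coloneqq \theta_\tau\theta_\omega^{-1}$ and $U \coloneqq \theta_g^{-1}\theta_f$, both invertible operators on $\mathcal{X}$, and verify $V\omega_n = \theta_\tau e_n = \tau_n$ and $g_n U = \zeta_n\theta_f = f_n$, so that $(\{f_n\}_n, \{\tau_n\}_n)$ is a p-approximate Riesz basis.

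The main obstacle lies in this last construction: invoking an ambient p-orthonormal basis of $\mathcal{X}$ presupposes that $\mathcal{X}$ admits one, which by the immediately preceding theorems is equivalent to $\mathcal{X}$ being isometrically isomorphic to $\ell^p(\mathbb{N})$. Assumption (ii) alone delivers only a topological isomorphism via $\theta_\tau$, not an isometric one. I would therefore read the statement under the standing hypothesis that $\mathcal{X}$ admits a p-orthonormal basis, in which case the construction above goes through without further difficulty.
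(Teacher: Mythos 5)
Your argument is essentially the paper's own proof. The $(i)\Rightarrow(ii)$ direction is identical, and in $(ii)\Rightarrow(i)$ your operators $V=\theta_\tau\theta_\omega^{-1}$ and $U=\theta_g^{-1}\theta_f$ coincide with the paper's $V=\theta_\tau\theta_g$ and $U=\theta_\omega\theta_f$ (since $\theta_g\theta_\omega=I_{\ell^p(\mathbb{N})}$ and $\theta_\omega\theta_g=I_{\mathcal{X}}$); you merely obtain their invertibility by first inverting $\theta_\tau$ and $\theta_f$, whereas the paper exhibits two-sided inverses of $U$ and $V$ directly. The caveat you raise at the end is genuine and applies equally to the paper: its proof of $(ii)\Rightarrow(i)$ also opens with ``Let $(\{g_n\}_n,\{\omega_n\}_n)$ be a p-orthonormal basis for $\mathcal{X}$'' without justifying existence, yet hypothesis (ii) only makes $\theta_\tau$ a topological isomorphism of $\ell^p(\mathbb{N})$ onto $\mathcal{X}$, while by the surrounding theorems a p-orthonormal basis exists iff $\mathcal{X}$ is \emph{isometrically} isomorphic to $\ell^p(\mathbb{N})$ (e.g.\ $\ell^p(\mathbb{N})$ under an equivalent non-isometric norm satisfies (ii) with the standard pair but admits no p-orthonormal basis, so no p-approximate Riesz basis). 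Your suggested standing hypothesis that $\mathcal{X}$ admit a p-orthonormal basis is exactly the repair needed, and with it your proof is complete.
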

\begin{proof}
	\begin{enumerate}[\upshape(i)]
	\item $\Rightarrow$ (ii) Let $U,V:\mathcal{X}\to \mathcal{X}$ be bounded linear invertible operators such that $	f_n=g_n U,~	\tau_n=V\omega_n,  \forall n \in \mathbb{N}$,  where $(\{g_n\}_n, \{\omega_n\}_n)$  is a p-orthonormal basis    for $\mathcal{X}$. We then have $\theta_f=\theta_gU$ and $\theta_\tau=V\theta_\omega$. Hence $S_{f,\tau}=\theta_\tau \theta_f=V\theta_\omega \theta_gU=VU$ which is invertible. Therefore $ (\{f_n \}_{n}, \{\tau_n \}_{n}) $ is  a p-ASF   for $\mathcal{X}$. We now find 
	\begin{align*}
		\theta_fS_{f,\tau}^{-1}\theta_\tau\{a_n\}_n&=\theta_gU(VU)^{-1}V\theta_\omega\{a_n\}_n=\theta_gUU^{-1}V^{-1}V\theta_\omega\{a_n\}_n\\
		&=\theta_g \theta_\omega\{a_n\}_n=\theta_g \left(\sum_{n=1}^{\infty}a_n\omega_n\right)=\sum_{n=1}^{\infty}a_n\theta_g\omega_n\\
		&=\sum_{n=1}^{\infty}a_n\sum_{m=1}^{\infty}g_m(\omega_n)e_m= \sum_{n=1}^{\infty}a_ne_n=\{a_n\}_n,\quad \forall \{a_n\}_n \in \ell^p(\mathbb{N}).		
	\end{align*}
	\item $\Rightarrow$ (i) Let $(\{g_n\}_n, \{\omega_n\}_n)$  be  a p-orthonormal basis    for $\mathcal{X}$. Define $U\coloneqq \theta_\omega\theta_f$ and $V\coloneqq \theta_\tau \theta_g$. Then 
	
	\begin{align*}
		g_n(Ux)&=g_n(\theta_\omega \theta_fx)=g_n\left(\theta_\omega\left(\sum_{m=1}^{\infty}f_m(x)e_m\right)\right)\\
		&=g_n\left(\sum_{m=1}^{\infty}f_m(x)\theta_\omega e_m\right)
		=g_n\left(\sum_{m=1}^{\infty}f_m(x) \omega_m\right)\\
		&=\sum_{m=1}^{\infty}f_m(x) g_n(\omega_m)=f_n(x), \quad \forall x \in \mathcal{X},
			\end{align*}
			\begin{align*}
		V\omega_n=\theta_\tau \theta_g\omega_n=\theta_\tau\left(\sum_{m=1}^\infty g_m(\omega_n)e_m\right)=\theta_\tau e_n=\tau_n, \quad  \forall n \in \mathbb{N}.
	\end{align*}
To complete the proof, we have to show that both $U$ and $V$ are bounded invertible. First note that 
\begin{align*}
	\theta_g\theta_\omega\{a_n\}_n&=	\theta_g \left(\sum_{n=1}^{\infty}a_n\omega_n\right)=\sum_{n=1}^{\infty}a_n\theta_g\omega_n
	\\
	&=\sum_{n=1}^{\infty}a_n\sum_{m=1}^{\infty}g_m(\omega_n)e_m
	= \sum_{n=1}^{\infty}a_ne_n\\
	&=\{a_n\}_n, \quad \forall \{a_n\}_n \in \ell^p(\mathbb{N}).
\end{align*} 
Hence 
\begin{align}\label{RIESZEPP}
	\theta_g\theta_\omega =I_{\ell^p(\mathbb{N})}.
\end{align} 
Now using Equation (\ref{RIESZEQUATION}) and  Equation (\ref{RIESZEPP}) we find 
\begin{align*}
	&U (S_{f,\tau}^{-1}\theta_\tau\theta_g)=\theta_\omega \theta_fS_{f,\tau}^{-1}\theta_\tau\theta_g=\theta_\omega I_{\ell^p(\mathbb{N})}\theta_g=\theta_\omega \theta_g=I_\mathcal{X},\\
	&(S_{f,\tau}^{-1}\theta_\tau\theta_g)U=S_{f,\tau}^{-1}\theta_\tau\theta_g\theta_\omega \theta_f=S_{f,\tau}^{-1}\theta_\tau I_{\ell^p(\mathbb{N})}\theta_f=S_{f,\tau}^{-1}\theta_\tau \theta_f=I_\mathcal{X}
\end{align*}
and 
\begin{align*}
	&V(\theta_\omega \theta_fS_{f,\tau}^{-1})=\theta_\tau \theta_g\theta_\omega \theta_fS_{f,\tau}^{-1}=\theta_\tau I_{\ell^p(\mathbb{N})}\theta_fS_{f,\tau}^{-1}=\theta_\tau\theta_fS_{f,\tau}^{-1}=I_\mathcal{X},\\
	&(\theta_\omega \theta_fS_{f,\tau}^{-1})V=\theta_\omega \theta_fS_{f,\tau}^{-1}\theta_\tau \theta_g=\theta_\omega I_{\ell^p(\mathbb{N})}\theta_g=\theta_\omega\theta_g=I_\mathcal{X}.
\end{align*}
Hence $U$ and $V$ are invertible.
\end{enumerate}	
\end{proof}
Now we formulate Feichtinger conjectures for Banach spaces.
\begin{conjecture}\label{FB}\textbf{(Feichtinger conjecture for p-approximate Schauder frames)
	Let  $ (\{f_n \}_{n}, $ $ \{\tau_n \}_{n}) $  be  a p-ASF for   $\mathcal{X}$ such that 
\begin{align*}
	&0<\inf_{n\in \mathbb{N}}\|\tau_n\|\leq \sup_{n\in \mathbb{N}}\|\tau_n\|<\infty ,\\
	&0<\inf_{n\in \mathbb{N}}\|f_n\|\leq \sup_{n\in \mathbb{N}}\|f_n\|<\infty.
\end{align*}
Then $ (\{f_n \}_{n}, \{\tau_n \}_{n}) $  can be partitioned into a finite union of p-approximate Riesz sequences. Moreover, what is the number of partitions required?}
\end{conjecture}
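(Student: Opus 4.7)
The plan is to adapt the Hilbert-space Feichtinger strategy (restricted invertibility plus Marcus-Spielman-Srivastava) together with the Liu-Liu argument for tight p-frames on $\ell^p(\mathbb{N})$. The first step is a reduction to the Parseval case. Since $S_{f,\tau}$ is invertible, replacing $\tau_n$ by $S_{f,\tau}^{-1}\tau_n$ yields a Parseval p-ASF, while the upper and lower norm bounds on $\tau_n$ are preserved up to the factors $\|S_{f,\tau}\|$ and $\|S_{f,\tau}^{-1}\|$. A partition of the new pair into p-approximate Riesz sequences immediately yields a partition of the original pair, since one may compose the block operators $U,V$ from Definition \ref{PRIESZDEFINITION} on the appropriate side with $S_{f,\tau}$. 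So assume from now on $\theta_\tau \theta_f = I_\mathcal{X}$.

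Second, I would translate the target via Theorem \ref{RIESZCHAR}. For any $I \subseteq \mathbb{N}$ the restricted pair $(\{f_n\}_{n\in I},\{\tau_n\}_{n\in I})$ is a p-approximate Riesz sequence iff the restricted synthesis operator $\theta_\tau|_{\ell^p(I)}:\ell^p(I)\to \mathcal{X}$ is bounded above and bounded below by constants depending only on the global frame data. Thus the task reduces to the following: partition $\mathbb{N}=I_1 \sqcup \cdots \sqcup I_M$ so that each $\theta_\tau|_{\ell^p(I_k)}$ is uniformly bounded below, with $M$ and the lower bound depending only on $a,b$ and the norm bounds on $f_n,\tau_n$.

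Third, I would attempt this partition via a Banach-space analog of the Bourgain-Tzafriri restricted invertibility theorem applied to $T=\theta_\tau$: if $T:\ell^p(\mathbb{N})\to \mathcal{X}$ is bounded with $\|Te_n\|\geq c>0$, one seeks $I\subseteq \mathbb{N}$ of positive density such that $T|_{\ell^p(I)}$ is bounded below; iterating gives the finite partition. For $\mathcal{X}$ isomorphic to $\ell^p(\mathbb{N})$ this can be pursued via Clarkson-type inequalities exactly as in Liu-Liu. For a general Banach space, one would need either a suitable embedding into an $\ell^p$-type space, or a direct $p$-analog of MSS interlacing.

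The main obstacle is precisely this last ingredient. The Marcus-Spielman-Srivastava technique is deeply Hilbertian: mixed characteristic polynomials, real-rootedness, interlacing families and orthogonal projections have no transparent $\ell^p$ or general Banach analog. Clarkson-inequality routes are restricted to $\ell^p$-like spaces, while existing Banach-space restricted invertibility results tend to produce $M$ and $a$ depending on structural invariants of $\mathcal{X}$ (such as type or cotype constants) rather than on the frame bounds alone. Consequently I expect the qualitative existence of a finite partition to be tractable under geometric hypotheses on $\mathcal{X}$ (say, $\mathcal{X}$ of type $p$ and cotype $p$, or $\mathcal{X}$ embedding isomorphically into $\ell^p$), while the quantitative \emph{moreover} part — a partition count depending only on the frame data — will require either a genuine $p$-analog of real-rootedness or a fundamentally new method.
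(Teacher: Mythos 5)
You should first note that the statement you are trying to prove is not a theorem in the paper at all: Conjecture \ref{FB} is \emph{formulated} there as an open problem, and the paper supplies no proof to compare against. Your text is accordingly not a proof but a strategy sketch, and you concede as much in your final paragraph: the entire argument funnels into a Banach-space (or $\ell^p$) analogue of Marcus--Spielman--Srivastava interlacing or of Bourgain--Tzafriri restricted invertibility with constants depending only on the frame data, and you state explicitly that no such ingredient is available. An argument whose decisive step is ``one would need either a suitable embedding \dots or a direct $p$-analog of MSS interlacing'' establishes nothing; at best it records why the conjecture is plausible and where the difficulty sits.

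Beyond the admitted gap, your second step contains an error that the paper itself warns against in the paragraph immediately following Conjecture \ref{FB}. You reduce the problem to partitioning $\mathbb{N}$ so that each restricted synthesis operator $\theta_\tau|_{\ell^p(I_k)}$ is bounded below, i.e.\ you work only with the vectors $\tau_n$. But a p-approximate Riesz sequence is by definition a property of the \emph{pair} $(\{f_n\}_{n\in I},\{\tau_n\}_{n\in I})$: Definition \ref{PRIESZDEFINITION} requires operators $U,V$ intertwining \emph{both} families with a p-orthonormal basis, and Theorem \ref{RIESZCHAR} encodes this through the condition $\theta_f S_{f,\tau}^{-1}\theta_\tau = I_{\ell^p}$, which involves $\theta_f$ as well as $\theta_\tau$. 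The paper's remark that separately partitioning $\{\tau_n\}$ and the (Riesz-identified) $\{f_n\}$ in the Hilbert case does not resolve the conjecture is precisely the observation that a partition good for the synthesis side need not be good for the pair. So even granting a restricted invertibility theorem for $\theta_\tau$, your reduction would not yield the conclusion; you would still have to arrange a \emph{simultaneous} partition under which each restricted pair satisfies the two-sided intertwining condition, and nothing in your outline addresses this.
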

Note that using Feichtinger conjecture (result of Marcus, Spielman, and Srivastava) for Hilbert spaces we get the following. Let $ (\{f_n \}_{n}, $ $ \{\tau_n \}_{n}) $  be  a  2-ASF for a Hilbert space   $\mathcal{H}$ such that 
\begin{align*}
	&0<\inf_{n\in \mathbb{N}}\|\tau_n\|\leq \sup_{n\in \mathbb{N}}\|\tau_n\|<\infty ,\\
	&0<\inf_{n\in \mathbb{N}}\|f_n\|\leq \sup_{n\in \mathbb{N}}\|f_n\|<\infty.
\end{align*}
 Riesz representation theorem says we can identify  $ \{f_n \}_{n} $ by  a sequence $ \{\omega_n \}_{n} $ in $\mathcal{H}$. Therefore, we have $ \{\tau_n \}_{n} $ and $ \{\omega_n \}_{n} $ are frames for $\mathcal{H}$.
Feichtinger conjecture for Hilbert spaces says that both $ \{\tau_n \}_{n} $ and $ \{\omega_n \}_{n} $ can be partitioned into  a finite union of  Riesz sequences. However, this does not solve Conjecture \ref{FB} even in this special  case. The reason is that the number of partitions of $ \{\tau_n \}_{n} $ and $ \{\omega_n \}_{n} $ may be different. We want a partition  of the form 
\begin{align*}
 (\{f_n \}_{n},  \{\tau_n \}_{n})= (\{f^{(1)}_n \}_{n}, \{\tau^{(1)}_n \}_{n})	\cup \cdots \cup (\{f^{(m)}_n \}_{n}, \{\tau^{(m)}_n \}_{n}), 
\end{align*}
where each of $(\{f^{(1)}_n \}_{n}, \{\tau^{(1)}_n \}_{n}),  \dots,  (\{f^{(m)}_n \}_{n}, \{\tau^{(m)}_n \}_{n})$ is a p-approximate Riesz sequence.
\begin{conjecture}\label{FS}\textbf{(Feichtinger conjecture for p-approximate Bessel sequences)
		Let  $ (\{f_n \}_{n}, $ $ \{\tau_n \}_{n}) $  be  a p-ABS for   $\mathcal{X}$ such that 
		\begin{align*}
			&0<\inf_{n\in \mathbb{N}}\|\tau_n\|\leq \sup_{n\in \mathbb{N}}\|\tau_n\|<\infty ,\\
		&0<\inf_{n\in \mathbb{N}}\|f_n\|\leq \sup_{n\in \mathbb{N}}\|f_n\|<\infty.
		\end{align*}
		Then $ (\{f_n \}_{n}, \{\tau_n \}_{n}) $  can be partitioned into a finite union of p-approximate Riesz sequences. Moreover, what is the number of partitions required?}
\end{conjecture}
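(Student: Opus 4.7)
The plan is to reduce Conjecture \ref{FS} to Conjecture \ref{FB} by extending the given p-ABS to a p-ASF while preserving the uniform upper and lower norm conditions on both $\{f_n\}_n$ and $\{\tau_n\}_n$, then transferring the partition guaranteed by Conjecture \ref{FB} back to the original pair.

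First I would fix an auxiliary Parseval p-ASF $(\{g_m\}_m, \{\omega_m\}_m)$ for $\mathcal{X}$ with uniformly bounded norms (assuming $\mathcal{X}$ admits one, which is the main structural hypothesis entering the argument; this is automatic when $\mathcal{X}$ is isometrically isomorphic to $\ell^p(\mathbb{N})$). Given the p-ABS $(\{f_n\}_n, \{\tau_n\}_n)$, its frame operator $S_{f,\tau}$ is a bounded operator on $\mathcal{X}$. Scale the auxiliary family by a parameter $\alpha > 0$ to obtain $(\{\alpha g_m\}_m, \{\alpha \omega_m\}_m)$, and interleave it with the original into a single pair indexed by $\mathbb{N}$. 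The combined frame operator becomes $S_{f,\tau} + \alpha^2 I_{\mathcal{X}}$, invertible for any $\alpha^2 > \|S_{f,\tau}\|$ via the Neumann series. The analysis and synthesis operators of the combined pair into and out of $\ell^p(\mathbb{N})$ remain bounded because each summand is bounded, and the uniform norm conditions $0 < \inf \|\cdot\| \leq \sup \|\cdot\| < \infty$ on both sides are preserved since the inserted vectors have constant norm $\alpha$.

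Applying Conjecture \ref{FB} to the extended p-ASF then produces a partition into finitely many p-approximate Riesz sequences, say $M$ classes, with $M$ depending on the bounds. Restrict each class to the subset of indices coming from the original p-ABS, giving a candidate partition of $(\{f_n\}_n, \{\tau_n\}_n)$ into at most $M$ subfamilies. To conclude, I would verify that a subsequence of a p-approximate Riesz sequence is itself a p-approximate Riesz sequence: if $\{\tau_{n_k}\}_k = \{V\omega_{n_k}\}_k$ is extracted from a p-approximate Riesz basis in the sense of Definition \ref{PRIESZDEFINITION} with data $(\{g_n\}_n, \{\omega_n\}_n)$ and invertible $V$, then $\{\omega_{n_k}\}_k$ is a subsequence of an unconditional Schauder basis, hence an unconditional basic sequence that forms a p-orthonormal basis for its closed span, and pulling back through $V$ exhibits $(\{f_{n_k}\}_k, \{\tau_{n_k}\}_k)$ as a p-approximate Riesz basis for $\overline{\operatorname{span}}\{\tau_{n_k}\}_k$.

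The main obstacle will be the subtlety that the span of a subsequence of a p-approximate Riesz sequence must be isometrically (not merely isomorphically) isomorphic to $\ell^p(\mathbb{N})$ in order to admit a p-orthonormal basis, whereas the transfer through the invertible $V$ is only an isomorphism in general. This may force either working with the equivalent inequality characterization $a\bigl(\sum_n |c_n|^p\bigr)^{1/p} \leq \bigl\|\sum_n c_n \tau_n\bigr\| \leq b\bigl(\sum_n |c_n|^p\bigr)^{1/p}$ as the operative notion of p-approximate Riesz sequence, or a careful renorming step compatible with the ambient norm on $\overline{\operatorname{span}}\{\tau_{n_k}\}_k$. A secondary obstacle is guaranteeing existence of the auxiliary Parseval p-ASF with bounded norms for a general $\mathcal{X}$; without it, the extension strategy fails and one must instead pursue a direct argument via a Banach-space analog of Weaver's conjecture combined with Clarkson-type inequalities, in the spirit of the Liu--Liu treatment of tight p-frames on $\ell^p(\mathbb{N})$.
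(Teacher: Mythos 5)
There is no proof of this statement in the paper for your argument to be compared against: Conjecture \ref{FS} is stated as an open conjecture, one of the problems the paper is formulating, not a theorem the paper establishes. Your proposal therefore cannot be "correct" in the sense of matching or improving on the paper's argument, and more importantly it does not actually prove anything, because its entire weight rests on Conjecture \ref{FB}, which is equally open. Reducing the Bessel-sequence version to the frame version is a standard and reasonable move (it mirrors the known equivalence of Conjectures \ref{FCON} and \ref{FCB} in the Hilbert space case), but it only transfers the difficulty; it does not discharge it.

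Beyond that structural issue, the reduction itself has two genuine gaps that you partly acknowledge but do not close. First, the auxiliary Parseval p-ASF with uniformly bounded norms need not exist for a general Banach space $\mathcal{X}$; the paper's own results show that admitting a p-orthonormal basis forces $\mathcal{X}$ to be isometrically isomorphic to $\ell^p(\mathbb{N})$, and the paper explicitly cites the Krishna--Johnson result that p-approximate Bessel sequences cannot always be extended to p-ASFs (this is exactly the extension step your argument needs, and it is known to fail in general). Second, the claim that a subfamily of a p-approximate Riesz sequence is again a p-approximate Riesz sequence is delicate under Definition \ref{PRIESZDEFINITION}, since that definition routes through a p-orthonormal basis of the closed span, hence through an isometric copy of $\ell^p$, while conjugation by the invertible $V$ only preserves isomorphism. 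Until the definitional equivalence with the two-sided inequality characterization is established for subfamilies, the final step of restricting the partition to the original index set is not justified.
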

\begin{conjecture}\textbf{(Finite dimensional Feichtinger conjecture for p-approximate Schauder frames)}\label{3}
\textbf{Let 	 $\mathcal{X}$ be a $d$-dimensional Banach  space. For every    real $b,c>0$, there exist a natural number $M(b,c)$,  a real $a(b,c)>0$ so that whenever $(\{f_j\}_{j=1}^n, \{\tau_j\}_{j=1}^n)$ is a p-ASF   for   $\mathcal{X}$ with upper frame bound $b$ and
	\begin{align*}
	\|\tau_j\|\geq c \quad \text{ and }\quad 	\|f_j\|\geq c, \quad \forall 1\leq j \leq n,
	\end{align*}
 then the set $\{1,2, \dots, n\}$ can be partitioned into sets $I_1,I_2,\dots,  I_{M(b,c)}$ so that for each $1\leq k \leq M(b,c)$, $ (\{f_j\}_{j\in I_k}, \{\tau_j\}_{j\in I_k})$ is a p-approximate Riesz sequence with lower p-approximate Riesz bound  $a(b,c)$ and upper p-approximate Riesz bound $b$. Moreover, what is the number of partitions required?}	
\end{conjecture}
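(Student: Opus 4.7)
The plan is to reduce to the Hilbert space case and invoke the Marcus-Spielman-Srivastava resolution of the finite dimensional Feichtinger conjecture, then pull the resulting partition back to $\mathcal{X}$ at the cost of dimension-dependent distortions. Since $\dim \mathcal{X} = d$ is fixed, by John's theorem the underlying vector space can be equipped with a Hilbert norm $\|\cdot\|_\mathcal{H}$ satisfying $\|x\|_\mathcal{X} \leq \|x\|_\mathcal{H} \leq \sqrt{d}\,\|x\|_\mathcal{X}$. Under this auxiliary structure, Riesz representation identifies each $f_j \in \mathcal{X}^*$ with a vector $\omega_j \in \mathcal{H}$ whose $\mathcal{H}$-norm is comparable to $\|f_j\|_{\mathcal{X}^*}$, and the norms $\|\tau_j\|_\mathcal{H}$, $\|\omega_j\|_\mathcal{H}$ are both bounded below by $c/\sqrt{d}$.

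First I would verify that $\{\tau_j\}_{j=1}^n$ and $\{\omega_j\}_{j=1}^n$ are Bessel sequences in $\mathcal{H}$ with a common bound $\widetilde{b}=\widetilde{b}(b,d)$, by combining the hypothesis on the synthesis and analysis operators of the p-ASF together with the finite-dimensional norm equivalences between $\ell^p_n$ and $\ell^2_n$. Applying the finite dimensional Feichtinger conjecture for Bessel sequences (Conjecture \ref{FDFCB}, settled by MSS) to each family separately and taking a common refinement, I obtain a partition $\{1,\ldots,n\} = I_1 \cup \cdots \cup I_{M}$, with $M = M(b,c,d)$, such that each restricted family is a Hilbert space Riesz sequence with uniform lower Riesz bound $\widetilde{a}$. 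Using Theorem \ref{RIESZCHAR} and the norm comparisons, I would then check that each pair $(\{f_j\}_{j \in I_k}, \{\tau_j\}_{j \in I_k})$ forms a p-approximate Riesz sequence in $\mathcal{X}$: the upper bound $b$ is inherited directly from the Bessel bound of the original p-ASF restricted to the coordinate subspace $\ell^p(I_k)$, while the lower bound is obtained by translating the Hilbert space Riesz condition on $\{\tau_j\}_{j\in I_k}$ through the isomorphism and absorbing all Banach-Mazur constants into $a(b,c)$.

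The principal obstacle is controlling how the $\ell^p \leftrightarrow \ell^2$ conversion factor $|I_k|^{|1/p - 1/2|}$ enters the lower Riesz bound. In the present finite-dimensional setting the remedy is elementary: any Riesz sequence is linearly independent, so $|I_k| \leq d$ automatically, bounding the conversion factor by $d^{|1/p - 1/2|}$ and yielding a constant $a(b,c)$ depending only on $b$, $c$, and $d$. A deeper difficulty, which I expect to be the real obstacle when one passes to the infinite dimensional Conjecture \ref{FB}, is that this block-size control disappears: one then needs either a dimension-free sharpening of MSS that simultaneously partitions the two families $\{\tau_n\}_n$ and $\{f_n\}_n$, or a direct adaptation of the mixed characteristic polynomial machinery to the synthesis and analysis operators on $\ell^p(\mathbb{N})$. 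Developing appropriate $p$-analogues of real-rootedness, interlacing families, and the expected polynomial seems to be where essentially new ideas are required.
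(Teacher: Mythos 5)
The paper does not prove this statement: Conjecture \ref{3} is posed as an open problem, so there is no proof of record to compare yours against. What follows is an assessment of your attempt on its own terms.

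There are two genuine gaps. First, and most seriously, you are proving the wrong conclusion. In this paper a p-approximate Riesz sequence (Definition \ref{PRIESZDEFINITION} and the definition following it) is not a pair satisfying a two-sided inequality $a\left(\sum|c_j|^p\right)^{1/p}\leq\left\|\sum c_j\tau_j\right\|\leq b\left(\sum|c_j|^p\right)^{1/p}$; it is a pair obtained from a p-orthonormal basis of $\overline{\operatorname{span}}\{\tau_j\}_{j\in I_k}$ by invertible operators, which forces that span to be \emph{isometrically} isomorphic to $\ell^p_{|I_k|}$ and forces the exact identity $\theta_fS_{f,\tau}^{-1}\theta_\tau=I$ of Theorem \ref{RIESZCHAR}. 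Your John-ellipsoid/MSS argument produces, at best, the norm inequalities with $d$-dependent constants, together with Hilbert-space Riesz bounds for $\{\tau_j\}$ and $\{\omega_j\}$ separately; it produces neither an isometric copy of $\ell^p$ inside $\mathcal{X}$ nor the joint condition tying $\{f_j\}_{j\in I_k}$ to $\{\tau_j\}_{j\in I_k}$. For a generic $d$-dimensional $\mathcal{X}$ (say $\ell^\infty_d$ with $p=2$) no subspace of dimension at least $2$ spanned by the $\tau_j$ is isometric to a Euclidean space, so no partition whatsoever can turn your blocks into p-approximate Riesz sequences in the paper's sense; a scheme that only controls norms up to constants cannot close this gap.

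Second, the constants. The point of the finite dimensional Feichtinger conjectures (\ref{FDFCF}, \ref{FDFCB} and their Banach analogues \ref{3}, \ref{4}) is that $M(b,c)$ and $a(b,c)$ are uniform in $d$ and $n$; that uniformity is what makes the finite dimensional statements carry the content of the infinite dimensional ones. Your $M(b,c,d)$ and $a(b,c,d)$, which absorb a factor $\sqrt{d}$ from John's theorem and $d^{|1/p-1/2|}$ from the $\ell^p$--$\ell^2$ comparison on blocks, therefore do not establish the conjecture even if the first gap were repaired. A smaller issue of the same flavour: the upper frame bound $b$ of a p-ASF bounds $\|S_{f,\tau}\|$, not $\|\theta_\tau\|$, and restricting to a subset need not preserve it, so the claimed upper p-approximate Riesz bound $b$ for the blocks is not ``inherited directly'' either. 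Your closing remarks about where new ideas are needed for Conjecture \ref{FB} are reasonable, but the dimension-free difficulty is already present in the finite dimensional statement you set out to prove.
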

\begin{conjecture}\textbf{(Finite dimensional Feichtinger conjecture for p-approximate Bessel sequences)}\label{4}
	\textbf{Let 	 $\mathcal{X}$ be a $d$-dimensional Banach space. For every $b>0$, there exists a natural number $M(b)$ and a real  $a(b)>0$ so that for every p-ABS $(\{f_j\}_{j=1}^n, \{\tau_j\}_{j=1}^n)$ for $\mathcal{X}$  with Bessel bound $b$ and 
			\begin{align*}
			\|\tau_j\|=1 \quad \text{ and }\quad 	\|f_j\|=1, \quad \forall 1\leq j \leq n
		\end{align*}
	 can be written as a union of $M(b)$ p-approximate Riesz basic sequences each with lower p-approximate Riesz bound $a(b)$. Moreover, what is the number of partitions required?}
\end{conjecture}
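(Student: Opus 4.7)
The plan is to attempt a reduction of Conjecture \ref{4} to the finite-dimensional Feichtinger conjecture for Bessel sequences in Hilbert spaces (Conjecture \ref{FDFCB}), known via the Marcus--Spielman--Srivastava resolution of the Kadison--Singer problem, and then to transport the partition back to the Banach space via the analysis and synthesis operators of the p-ABS.

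First I would extract from the given unit-norm p-ABS $(\{f_j\}_{j=1}^n,\{\tau_j\}_{j=1}^n)$ a companion Bessel sequence in a Hilbert space whose parameters are controlled purely by $b$. One natural attempt is to consider the Gram columns $w_j:=(f_i(\tau_j))_{i=1}^n\in\ell^2(\{1,\dots,n\})$. Because $f_j(\tau_j)$ is close to $1$ under unit-norm normalization, these vectors enjoy lower norm bounds, while $\sum_j w_j w_j^*$ is the Gram matrix $\theta_f\theta_\tau$ of the p-ABS and is controlled by the Bessel bound. Applying the Hilbert space theorem would then yield a partition $\{1,\dots,n\}=I_1\sqcup\cdots\sqcup I_{M(b)}$ such that each $\{w_j\}_{j\in I_k}$ is a Riesz sequence with a uniform lower Riesz bound $a'(b)$.

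It remains to transport this partition back to $\mathcal{X}$. The upper p-approximate Riesz bound is immediate from the synthesis operator,
\begin{align*}
\left\|\sum_{j\in I_k}c_j\tau_j\right\|\leq \|\theta_\tau\|\left(\sum_{j\in I_k}|c_j|^p\right)^{1/p},
\end{align*}
since $\|\theta_\tau\|$ is controlled by $b$. The lower bound, however, requires converting the $\ell^2$-Riesz inequality for $\{w_j\}_{j\in I_k}$ into the $\ell^p$-Riesz inequality for $\{\tau_j\}_{j\in I_k}$ measured in the Banach norm of $\mathcal{X}$.

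The principal obstacle, and where I expect the argument to stall without genuinely new ideas, is precisely this $\ell^2$-to-$\ell^p$ transfer under the additional demand that $M(b)$ and $a(b)$ be independent of $n$ and $d$. Naive $\ell^2$--$\ell^p$ comparisons introduce factors of $|I_k|^{|1/p-1/2|}$ which destroy uniformity, and the mixed characteristic polynomial and interlacing family machinery of Marcus--Spielman--Srivastava is intrinsically spectral and has no immediate $\ell^p$ analogue. A more promising route would be to first establish the corresponding Weaver-type conjecture for Banach spaces (which the author formulates elsewhere in the paper) and deduce Conjecture \ref{4} from it; alternatively, one could try to prove a Bourgain--Tzafriri style restricted invertibility theorem in the $\ell^p$ setting, using the p-orthonormal basis framework of Theorem \ref{PREVIUOS} as a substitute for orthonormality. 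Either of these intermediate steps appears to be the genuine technical heart of the problem.
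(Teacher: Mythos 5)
This statement is posed in the paper as an open conjecture: the paper offers no proof of Conjecture \ref{4} (nor of Conjectures \ref{FB}, \ref{FS}, \ref{3}), so there is no argument of the author's to compare yours against. Your proposal is candid that it does not close the argument, and the obstruction you single out --- transferring a lower Riesz bound from the $\ell^2$ Gram-vector picture back to an $\ell^p$ inequality in the norm of $\mathcal{X}$ with constants independent of $n$ and $d$ --- is indeed a genuine one. But it is worth recording that the reduction stalls even earlier than that.

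First, the hypotheses of Conjecture \ref{4} are only $\|f_j\|=\|\tau_j\|=1$; unlike the paper's definition of a unit norm p-approximate Riesz sequence, they do not include $|f_j(\tau_j)|=1$, and in a general Banach space a norm-one functional can annihilate a norm-one vector. Hence your companion vectors $w_j=(f_i(\tau_j))_{i=1}^n$ need not be bounded below in $\ell^2$-norm, so the norm hypothesis of Conjecture \ref{FDFCB} is not available. Second, the Bessel bound $b$ controls $\theta_f\theta_\tau$ as an operator on $\ell^p$, not on $\ell^2$; for $p\neq 2$ the passage between these operator norms costs dimension-dependent factors, so the Hilbert-space input is not applicable with constants depending on $b$ alone. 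Third, even if one obtained a two-sided $\ell^p$ inequality for $\sum_{j\in I_k}c_j\tau_j$, that is not what Definition \ref{PRIESZDEFINITION} asks for: a p-approximate Riesz sequence is a \emph{pair} $(\{f_j\}_{j\in I_k},\{\tau_j\}_{j\in I_k})$ obtained from a p-orthonormal basis of $\overline{\operatorname{span}}\{\tau_j\}_{j\in I_k}$ by invertible operators, which by the paper's own results forces that span to be isometrically isomorphic to $\ell^p(I_k)$ and imposes a compatibility condition on the functionals $f_j$ that your transfer step never addresses. Your closing suggestion --- to route the problem through a Banach-space Weaver conjecture or an $\ell^p$ restricted invertibility theorem --- is consistent with how the paper frames these questions, namely as open problems awaiting exactly such machinery.
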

Casazza-Vershynin conjecture \cite{CASAZZABOOK} known as $R_\varepsilon$-conjecture for Hilbert spaces reads as follows.
\begin{theorem}\cite{CASAZZABOOK}
\textbf{(Casazza-Vershynin conjecture or $R_\epsilon$-conjecture for Hilbert spaces)}	\textbf{For every $\varepsilon>0$, every unit norm  Riesz sequence can be written as a finite union of unit norm $\varepsilon$-Riesz sequences}.
\end{theorem}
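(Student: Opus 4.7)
The plan is to reduce the statement to the paving conjecture of Anderson, which is equivalent to the Kadison--Singer problem and hence, by Marcus--Spielman--Srivastava, is now a theorem. Let $\{\tau_n\}_n$ be a unit norm Riesz sequence for $\mathcal{H}$ with Riesz bounds $A,B>0$, and form its Gram matrix $G$ on $\ell^2(\mathbb{N})$ with entries $G_{nm}=\langle \tau_m,\tau_n\rangle$. Since $\|\tau_n\|=1$, the diagonal of $G$ is $1$, and $G=\theta^*\theta$ where $\theta$ is the synthesis operator, so $G$ is a bounded positive operator with spectrum in $[A^2,B^2]$. Define $T\coloneqq G-I$. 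Then $T$ is bounded self-adjoint with zero diagonal and $\|T\|\leq \max(B^2-1,\,1-A^2)$.

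The key observation is that a subsequence $\{\tau_n\}_{n\in I_k}$ is itself a unit norm Riesz sequence whose Gram submatrix is $Q_kGQ_k$, where $Q_k$ is the diagonal projection onto $\ell^2(I_k)$. This subsequence is a unit norm $\varepsilon$-Riesz sequence (Riesz bounds in, say, $[1-\varepsilon,1+\varepsilon]$) precisely when
\begin{align*}
\|Q_kTQ_k\|=\|Q_k(G-I)Q_k\|\leq \varepsilon.
\end{align*}
So the problem reduces to producing a finite partition $\mathbb{N}=I_1\sqcup\cdots\sqcup I_r$ with $\|Q_kTQ_k\|\leq\varepsilon$ for every $k$.

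The next step is to rescale so that $\|T/\|T\|\,\|\leq 1$ and apply the paving theorem: for every $\eta>0$ there exists $r=r(\eta)\in\mathbb{N}$ such that every bounded self-adjoint operator on $\ell^2(\mathbb{N})$ with zero diagonal and norm at most $1$ admits a paving by $r$ diagonal projections $Q_1,\ldots,Q_r$ summing to $I$ with $\|Q_kTQ_k\|\leq\eta$. Choosing $\eta=\varepsilon/\|T\|$ and pulling back the scaling gives the desired partition and hence the desired decomposition of $\{\tau_n\}_n$ into $r$ unit norm $\varepsilon$-Riesz sequences.

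The main obstacle is Step~3: Marcus--Spielman--Srivastava prove the required paving statement only in the finite-dimensional setting via the method of interlacing families and mixed characteristic polynomials, while our Gram matrix $T$ acts on $\ell^2(\mathbb{N})$. One therefore needs the standard lifting of finite paving to infinite paving (a compactness/ultrafilter or direct limit argument, essentially due to Weaver and used throughout the Kadison--Singer literature) in order to obtain a uniform bound $r$ depending only on $\varepsilon$ and $\|T\|$, and not on the dimension of the ambient truncations. Once this lifting is in place, the verification that the two spectral interpretations of ``$\varepsilon$-Riesz'' match is routine from the identity $Q_kGQ_k=Q_k+Q_kTQ_k$.
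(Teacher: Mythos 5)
The paper does not actually prove this statement: it is quoted verbatim as a known theorem with the citation \cite{CASAZZABOOK}, serving only as motivation for the Banach-space conjectures that follow. So there is no in-paper argument to compare against; what you have written is, in effect, a reconstruction of how the $R_\varepsilon$-conjecture is deduced in the literature, and it is the standard and correct one. Your reduction is exactly the Casazza--Tremain route: pass to the Gram operator $G=\theta^*\theta$ with unit diagonal and spectrum in $[A^2,B^2]$, observe that a diagonal compression $Q_kGQ_k=Q_k+Q_kTQ_k$ controls the (squared) Riesz bounds of the corresponding subsequence, and invoke paving of the zero-diagonal self-adjoint part $T=G-I$. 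You also correctly identify the only genuinely delicate point, namely that Marcus--Spielman--Srivastava work in finite dimensions and one needs the dimension-independence of the paving constant $r(\eta)$ together with the standard finite-to-infinite lifting to pave $T$ on $\ell^2(\mathbb{N})$; that lifting is indeed available in the Kadison--Singer literature. Two small remarks. First, your ``precisely when'' is convention-dependent: $\|Q_kTQ_k\|\leq\varepsilon$ yields squared Riesz bounds $1\pm\varepsilon$, hence unsquared bounds $\sqrt{1\pm\varepsilon}\in[1-\varepsilon,1+\varepsilon]$, so the implication you actually need holds under either normalization, but the converse direction does not and is not required. Second, the rescaling $\eta=\varepsilon/\|T\|$ should be accompanied by the trivial observation that if $\|T\|\leq\varepsilon$ no partition is needed, so that $\eta$ may be assumed to lie in $(0,1)$. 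Neither point is a gap.
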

We next formulate Casazza-Vershynin conjecture \cite{CASAZZABOOK}/$R_\varepsilon$-conjecture for Banach spaces. First we need a definition. 
\begin{definition}
Let  $p\in[1, \infty)$. A p-approximate Riesz sequence $ (\{f_n \}_{n}, \{\tau_n \}_{n}) $     for $\mathcal{X}$ is said to be \textbf{unit norm $\varepsilon$-p-approximate Riesz sequence} ($\varepsilon<1$) if the following conditions hold.
\begin{enumerate}[\upshape(i)]
	\item $\|f_n\|=\|\tau_n\|=|f_n(\tau_n)|=1$, $\forall n\in \mathbb{N}$.
	\item  For all $m\in \mathbb{N}$, 
	\begin{align*}
		(1-\varepsilon)\left(\sum_{n=1}^{m}|c_n|^p \right)^\frac{1}{p}\leq \left\|\sum_{n=1}^{m}c_n\tau_n\right\|\leq (1+\varepsilon)\left(\sum_{n=1}^{m}|c_n|^p \right)^\frac{1}{p},\quad  \forall c_1, \dots, c_m \in \mathbb{K}.
	\end{align*}
\end{enumerate}   
\end{definition}
\begin{conjecture}\label{11}\textbf{(Casazza-Vershynin conjecture or $R_\epsilon$-conjecture for Banach spaces}) \textbf{For every $\varepsilon>0$, every unit norm p-approximate Riesz sequence can be written as a finite union of unit norm $\varepsilon$-p-approximate Riesz sequences}.
\end{conjecture}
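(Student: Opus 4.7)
The plan is to bootstrap Conjecture \ref{11} from the Feichtinger conjecture for p-approximate Schauder frames (Conjecture \ref{FB}, and its finite-dimensional version \ref{3}) together with an iterative refinement, modeled on the Hilbert-space proof of the $R_\varepsilon$-conjecture. Let $(\{f_n\}_n, \{\tau_n\}_n)$ be a unit norm p-approximate Riesz sequence with Riesz bounds $a\le b$. By Theorem \ref{RIESZCHAR} applied on $Y := \overline{\operatorname{span}}\{\tau_n\}_n$, the pair is a p-ASF for $Y$ satisfying $\theta_f S_{f,\tau}^{-1}\theta_\tau = I_{\ell^p(\mathbb{N})}$. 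The goal is to partition $\mathbb{N}$ into finitely many sets $I_1, \dots, I_M$ so that each restricted pair $(\{f_n\}_{n\in I_k}, \{\tau_n\}_{n\in I_k})$ is a unit norm $\varepsilon$-p-approximate Riesz sequence, i.e.\ has Riesz bounds inside $[1-\varepsilon,1+\varepsilon]$.

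The central object is the \emph{defect operator} $D := \theta_f\theta_\tau - I_{\ell^p(\mathbb{N})}$ on $\ell^p(\mathbb{N})$, whose matrix $(f_n(\tau_m))_{n,m}$ has vanishing diagonal (by the biorthogonal normalization) and encodes the deviation from orthonormality. A short calibration shows that a sub-pair indexed by $I\subseteq \mathbb{N}$ is an $\varepsilon$-p-approximate Riesz sequence whenever the restriction $D|_{\ell^p(I)}$ has operator norm at most $\eta = \eta(\varepsilon,p)$, where $(1\pm\eta)^{1/p}$ lies in $[1-\varepsilon,1+\varepsilon]$. So the conjecture reduces to a \emph{paving} statement for $D$: partition $\mathbb{N}$ into finitely many blocks on each of which the restricted operator has norm at most $\eta$. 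I would feed this through Conjecture \ref{FB} to extract a first coarse partition compatible with the unit-norm hypothesis, and then iterate the refinement on every oversize block. The total number of iterations, and hence the final partition size $M = M(\varepsilon, p, a, b)$, is controlled by the ratio $\log\eta / \log\lambda$, where $\lambda<1$ is the per-iteration shrinkage of $\|D|_{\ell^p(\cdot)}\|$.

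The main obstacle is establishing this per-iteration shrinkage $\lambda<1$, i.e.\ a Banach-space analogue of Weaver's $KS_r$ theorem used by Marcus--Spielman--Srivastava in the Hilbert setting. Their proof relies on mixed characteristic polynomials of rank-one Hermitian matrices, a technique tied to self-adjointness and to the Gram matrix, both of which are absent here. I expect the paving of $D$ on $\ell^p(\mathbb{N})$ to require either a Bourgain--Tzafriri-type restricted invertibility result for p-ASFs, or a Clarkson-inequality argument in the spirit of \cite{LIULIU}, where Feichtinger for tight $p$-frames on $\ell^p$ was settled. Modulo such a paving theorem, the remaining steps---calibration of $\eta$ in terms of $\varepsilon$ and $p$, a rescaling argument to re-establish the unit-norm condition on each block (scaling each $\tau_n$ by $\|\tau_n\|^{-1}$ and absorbing the resulting multiplicative error into a slightly smaller working $\varepsilon$), and a finite induction to drive the defect below $\varepsilon$---are routine and yield the desired decomposition together with an explicit upper bound on the number of pieces.
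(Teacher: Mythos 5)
The statement you are addressing is posed in the paper as an open conjecture; the paper contains no proof of it, so the only question is whether your argument actually closes it. It does not. The decisive step --- the per-iteration shrinkage $\lambda<1$ for the restricted defect operator $D=\theta_f\theta_\tau-I_{\ell^p(\mathbb{N})}$, i.e.\ a paving theorem for zero-diagonal operators on $\ell^p(\mathbb{N})$ --- is essentially the entire content of the conjecture, and you explicitly leave it unproved (``Modulo such a paving theorem\dots''). In the Hilbert case this is supplied by Marcus--Spielman--Srivastava's resolution of Weaver's $KS_r$, and as you yourself note, their method (mixed characteristic polynomials of rank-one Hermitian updates) has no known analogue for $p\neq 2$; no substitute is provided. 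Compounding this, your ``first coarse partition'' is extracted from Conjecture \ref{FB}, which is itself one of the open conjectures of this paper, so even the initialization of the iteration rests on an unproved statement. A conditional reduction of one open conjecture to two other open statements is not a proof.

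There is also a gap in the calibration step even if one grants the paving input. In a Hilbert space the Riesz bounds of $\{\tau_n\}_{n\in I}$ are read off from the Gram matrix because $\|\sum_{n\in I}c_n\tau_n\|^2=\langle G_I c,c\rangle$; no such identity is available in $\ell^p$. Knowing $\|(\theta_f\theta_\tau-I)|_{\ell^p(I)}\|\leq\eta$ controls the composite $\theta_f\theta_\tau$ on $\ell^p(I)$, but the unit norm $\varepsilon$-p-approximate Riesz property is a two-sided estimate on $\|\theta_\tau c\|$ alone. From $\|\theta_f\theta_\tau c\|\geq(1-\eta)\|c\|_p$ you only obtain $\|\theta_\tau c\|\geq(1-\eta)\|\theta_f\|^{-1}\|c\|_p$, and $\|\theta_f\|$ is a fixed constant of the original sequence that need not be close to $1$; likewise the upper bound $\|\theta_\tau c\|\leq\|\theta_\tau\|\,\|c\|_p$ does not improve merely by restricting the index set. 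So driving $\|D|_{\ell^p(I)}\|$ to zero does not by itself force the restricted Riesz bounds into $[1-\varepsilon,1+\varepsilon]$: you would need to pave the synthesis operator (or the pair $(\theta_f,\theta_\tau)$ jointly), not just their composite, which is a genuinely different and harder statement. As it stands, the proposal is a plausible research programme, not a proof.
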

Weaver's conjecture for Hilbert spaces states as follows. 
\begin{theorem}\cite{WEAVER2004}
\textbf{(Weaver's conjecture for Hilbert spaces)}
\textbf{Let  $\mathcal{H}$ be a $d$-dimensional Hilbert  space.	There are universal constants $b\geq 2$ and $ b>\varepsilon > 0$ such that the following holds. Let $\{\tau_j\}_{j=1}^n$ be a collection in  $\mathcal{H}$ satisfying: 
	\begin{align*}
		\|\tau_j\|\leq 1,  \quad \forall 1\leq j \leq n
	\end{align*}
	and 
	\begin{align*}
		\sum_{j=1}^{n}|\langle h, \tau_j\rangle|^2\leq b\|h\|^2, \quad \forall h \in \mathcal{H}.
	\end{align*}
	Then there exists a partition $I_1, \dots, I_M$ of $\{1, 2, \dots, n\}$ such that 
	\begin{align*}
		\sum_{j\in I_k}|\langle h, \tau_j\rangle|^2\leq (b-\varepsilon)\|h\|^2, \quad \forall h \in \mathcal{H}, \forall 1\leq k \leq M.
\end{align*} }	
\end{theorem}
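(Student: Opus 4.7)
The plan is to derive the statement from the Marcus--Spielman--Srivastava theorem on sums of independent rank-one random matrices (the mixed characteristic polynomials method alluded to in the introduction). In the form I will use, MSS asserts that whenever $v_1,\dots,v_N$ are independent, finitely supported random vectors in $\mathbb{C}^{D}$ satisfying $\sum_{j=1}^{N}\mathbb{E}[v_{j}v_{j}^{*}]\preceq I_{D}$ and $\mathbb{E}\|v_{j}\|^{2}\leq \epsilon$ for every $j$, there exists a realization with $\lambda_{\max}\bigl(\sum_{j}v_{j}v_{j}^{*}\bigr)\leq (1+\sqrt{\epsilon})^{2}$. The strategy is to engineer random vectors $\xi_{j}$ whose realizations encode a uniformly random assignment of the indices to one of $M$ partition classes, and then read off the partition bound from this inequality.

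First, I would normalize the Bessel hypothesis by setting $\sigma_{j}:=\tau_{j}/\sqrt{b}$, so that $\|\sigma_{j}\|^{2}\leq 1/b$ and $\sum_{j}\sigma_{j}\sigma_{j}^{*}\preceq I_{d}$. Next, I would encode the random partition through independent random vectors $\xi_{j}$ in $\mathbb{C}^{Md}\cong \mathbb{C}^{M}\otimes \mathbb{C}^{d}$, each taking the value $\sqrt{M}\,e_{k}\otimes \sigma_{j}$ with probability $1/M$ for $k=1,\dots,M$. A short calculation gives $\mathbb{E}[\xi_{j}\xi_{j}^{*}]=I_{M}\otimes \sigma_{j}\sigma_{j}^{*}$, whence
\[
\sum_{j=1}^{n}\mathbb{E}[\xi_{j}\xi_{j}^{*}]=I_{M}\otimes \sum_{j=1}^{n}\sigma_{j}\sigma_{j}^{*}\preceq I_{Md},
\]
while $\|\xi_{j}\|^{2}=M\|\sigma_{j}\|^{2}\leq M/b$ holds deterministically.

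Then I would apply MSS with $\epsilon=M/b$ to extract a realization in which $\xi_{j}=\sqrt{M}\,e_{k(j)}\otimes\sigma_{j}$ for some function $k\colon\{1,\dots,n\}\to\{1,\dots,M\}$; the level sets $I_{\ell}:=k^{-1}(\ell)$ furnish the desired partition. The realized sum is block-diagonal,
\[
\sum_{j=1}^{n}\xi_{j}\xi_{j}^{*}=M\cdot\operatorname{diag}\Bigl(\sum_{j\in I_{1}}\sigma_{j}\sigma_{j}^{*},\,\dots,\,\sum_{j\in I_{M}}\sigma_{j}\sigma_{j}^{*}\Bigr),
\]
whose largest eigenvalue equals $M\cdot\max_{\ell}\bigl\|\sum_{j\in I_{\ell}}\sigma_{j}\sigma_{j}^{*}\bigr\|$. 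Undoing the normalization,
\[
\max_{1\leq \ell\leq M}\Bigl\|\sum_{j\in I_{\ell}}\tau_{j}\tau_{j}^{*}\Bigr\|\leq \frac{b}{M}\bigl(1+\sqrt{M/b}\,\bigr)^{2}=\bigl(\sqrt{b/M}+1\bigr)^{2}.
\]

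To close the argument, I would select $M$ depending only on $b$ so that $\bigl(\sqrt{b/M}+1\bigr)^{2}<b$, which reduces to $M>b/(\sqrt{b}-1)^{2}$ and is satisfiable for every $b\geq 2$; one may then take $\varepsilon:=b-\bigl(\sqrt{b/M}+1\bigr)^{2}>0$ as the required universal gap. The genuine obstacle is the MSS theorem itself---the construction of interlacing families of real-rooted polynomials and the barrier-function estimate for their largest roots---which I intend to invoke as a black box; the remainder of the argument is a tensor-product encoding plus an eigenvalue comparison. The only minor routine reduction is to pass from the version of MSS with $\sum\mathbb{E}[v_{j}v_{j}^{*}]\preceq I$ used here to the equality form that appears in the original MSS paper, and this is handled by appending a single deterministic dummy vector that fills the gap without contributing to any $I_{\ell}$.
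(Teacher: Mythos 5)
The paper does not prove this statement at all: it is quoted as a known background theorem, cited to Weaver and to the Marcus--Spielman--Srivastava papers, so there is no internal proof to compare against. Your derivation is essentially the standard MSS reduction of Weaver's $KS_r$ conjecture to their main theorem on random rank-one decompositions (it is, in substance, Corollary 1.3 of ``Interlacing families II''), and the computations check out: $\mathbb{E}[\xi_j\xi_j^*]=I_M\otimes\sigma_j\sigma_j^*$, the realized sum is block-diagonal with blocks $M\sum_{j\in I_\ell}\sigma_j\sigma_j^*$, and the bound $\frac{b}{M}\bigl(1+\sqrt{M/b}\bigr)^2=\bigl(\sqrt{b/M}+1\bigr)^2<b$ for $M>b/(\sqrt{b}-1)^2$ is correct, yielding a valid $\varepsilon$ and a number of blocks $M$ depending only on $b$. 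The one inaccuracy is the final remark that the passage from $\sum_j\mathbb{E}[v_jv_j^*]\preceq I$ to the equality form of MSS is handled by ``appending a single deterministic dummy vector'': the deficiency operator $I_{Md}-\sum_j\mathbb{E}[\xi_j\xi_j^*]$ is in general of rank up to $Md$, so you must append finitely many deterministic rank-one terms from its spectral decomposition, splitting each as needed so that every appended vector has squared norm at most $\epsilon=M/b$; since these terms are positive semidefinite, discarding them only decreases the realized sum and the conclusion survives. With that routine correction, and granting MSS as a black box (which is where all the depth lies), the argument is complete.
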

Here is Weaver's conjecture  \cite{WEAVER2004} for Banach spaces.
\begin{conjecture}\label{12}(\textbf{Weaver's conjecture for Banach spaces - 1})
\textbf{Let  $\mathcal{X}$ be a $d$-dimensional Banach  space.	There are universal constants $b\geq 2$ and $ b>\varepsilon > 0$ such that the following holds. Let $\{\tau_j\}_{j=1}^n$ be a collection in  $\mathcal{X}$ and  $\{f_j\}_{j=1}^n$ be a collection in  $\mathcal{X}^*$ satisfying: 
\begin{align*}
	\|f_j\|\leq 1, ~ \|\tau_j\|\leq 1, \quad \forall 1\leq j \leq n
\end{align*}
and 
\begin{align*}
	\left\|\sum_{j=1}^{n}f_j(x)\tau_j\right\|\leq b\|x\|, \quad \forall x \in \mathcal{X}.
\end{align*}
Then there exists a partition $I_1, \dots, I_M$ of $\{1, 2, \dots, n\}$ such that 
\begin{align*}
	\left\|\sum_{j\in I_k}f_j(x)\tau_j\right\|\leq (b-\varepsilon)\|x\|, \quad \forall x \in \mathcal{X}, \forall 1\leq k \leq M.
\end{align*} }
\end{conjecture}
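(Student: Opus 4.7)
The plan is to reduce Conjecture \ref{12} to the Hilbert space Weaver conjecture (established by Marcus, Spielman, and Srivastava \cite{MARCUSSPIELMANSRIVASTAVA}) by equipping $\mathcal{X}$ with an auxiliary Euclidean structure. Since $\dim \mathcal{X} = d < \infty$, John's theorem furnishes an inner product $\langle \cdot , \cdot \rangle_2$ on $\mathcal{X}$ whose induced norm $\|\cdot\|_2$ satisfies $\|x\| \leq \|x\|_2 \leq \sqrt{d}\,\|x\|$ for all $x \in \mathcal{X}$. Using the Riesz representation theorem for this Hilbert structure, I would realize each $f_j$ as $f_j(\cdot) = \langle \cdot, v_j \rangle_2$ for a unique $v_j \in \mathcal{X}$, so that the rank-one operator $T_j := f_j(\cdot)\,\tau_j$ acquires the Hilbert form $T_j(x) = \langle x, v_j\rangle_2 \,\tau_j$. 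This places the problem in a setting where MSS-type techniques apply directly.

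The next step is to translate the hypotheses into the Hilbert structure: the norm conditions $\|f_j\|\leq 1$ and $\|\tau_j\|\leq 1$ force $\|v_j\|_2 \leq \sqrt{d}$ and $\|\tau_j\|_2 \leq \sqrt{d}$, while the operator-norm bound $\|\sum_j T_j\|_{\mathcal{X}\to\mathcal{X}} \leq b$ transfers to a bound on the Hilbert operator norm $\|\sum_j T_j\|_{2\to 2}$ up to controlled distortion. After suitable rescaling so that the dilated rank-one pieces satisfy the hypotheses of the MSS theorem, I would invoke that theorem to obtain a partition $I_1,\dots,I_M$ (with $M$ depending only on the Hilbert-side parameters) such that each partial sum $\sum_{j\in I_k} T_j$ has Hilbert operator norm shrunk by a definite multiplicative factor. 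The final step pushes the estimate back to the original Banach norm via $\|y\| \leq \|y\|_2$ together with $\|x\|_2 \leq \sqrt{d}\,\|x\|$.

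The main obstacle, which I do not expect to be overcome by this reduction alone, is the preservation of \emph{universal} constants. The John distortion $\sqrt{d}$ inflates the bound on $\|\sum_j T_j\|_{2\to 2}$ and deflates the pull-back, so a crude execution loses a polynomial factor in $d$ and forces $\varepsilon$ to depend on $d$, whereas the conjecture insists that $b$ and $\varepsilon$ be dimension-free. To remove this dependence I see three avenues worth pursuing: (a) replace John's ellipsoid by a reduction that respects finer geometric invariants of $\mathcal{X}$ such as type, cotype, or modulus of smoothness; (b) establish the conjecture first for $\mathcal{X} = \ell^p_d$ using Clarkson's inequalities in the spirit of the Liu-Liu treatment \cite{LIULIU} of Feichtinger for tight p-frames on $\ell^p(\mathbb{N})$, and then extend to general $\mathcal{X}$ via interpolation or Banach--Mazur arguments; and (c) develop a genuine Banach-space analogue of the mixed characteristic polynomial and interlacing families method of Marcus, Spielman, and Srivastava \cite{MARCUSSPIELMANSRIVASTAVA2}, now applied to the non-self-adjoint rank-one operators $\tau_j \otimes f_j$ rather than Hermitian rank-one projections. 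Option (c) appears to be the only route to truly universal constants, and it is also where the deepest new ideas will be required.
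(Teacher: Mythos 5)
The statement you are trying to prove is posed in the paper as an open \emph{conjecture}: the author gives no proof of it, and indeed the whole point of the paper is to formulate it (together with its variants) as an unsolved problem. So there is no proof in the paper to compare against, and your proposal should be judged on its own terms — and on those terms it is not a proof but a reduction strategy that, as you yourself concede, does not close.

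There are two concrete gaps. First, the one you name: John's theorem distorts norms by a factor of $\sqrt{d}$ in each direction, so after transferring the hypothesis $\bigl\|\sum_j f_j(x)\tau_j\bigr\| \leq b\|x\|$ to the Euclidean norm and pulling the conclusion back, the deficit $\varepsilon$ you obtain degrades polynomially in $d$. Since the conjecture demands that $b$ and $\varepsilon$ be universal (dimension-free), the reduction as executed proves a strictly weaker statement, and none of your three proposed repairs (a)--(c) is carried out. Second, and more fundamental: even granting perfect isometry, the Marcus--Spielman--Srivastava form of Weaver's conjecture applies to sums of \emph{positive semidefinite} rank-one operators $u_j u_j^*$; its mixed characteristic polynomial and interlacing-family machinery rests on Hermitian structure and real-rootedness. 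After your Riesz representation step the summands are $x \mapsto \langle x, v_j\rangle_2\,\tau_j$ with $v_j$ generally not proportional to $\tau_j$, so $\sum_{j\in I_k} T_j$ is a sum of non-self-adjoint rank-one operators and the MSS theorem simply does not apply to it; controlling its operator norm by a partition is a genuinely different (and open) problem. You acknowledge this in option (c), but that acknowledgment is an admission that the essential idea is missing, not a step toward supplying it. As it stands, the proposal identifies the right obstacles without overcoming either of them, which is consistent with the statement remaining a conjecture in the paper.
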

Well, we can make variants of Conjecture \ref{12} as follows. 
\begin{conjecture}\label{121}
	(\textbf{Weaver's conjecture for Banach spaces - 2})
\textbf{Let  $\mathcal{X}$ be a $d$-dimensional Banach  space.	There are universal constants $b\geq 2$ and $ b>\varepsilon > 0$ such that the following holds. Let $\{\tau_j\}_{j=1}^n$ be a collection in  $\mathcal{X}$ and  $\{f_j\}_{j=1}^n$ be a collection in  $\mathcal{X}^*$ satisfying: 
	\begin{align*}
		\|f_j\|\leq 1, ~ \|\tau_j\|\leq 1, \quad \forall 1\leq j \leq n
	\end{align*}
	and 
	\begin{align*}
		\left\|\sum_{j=1}^{n}f_j(x)\tau_j\right\|= b\|x\|, \quad \forall x \in \mathcal{X}.
	\end{align*}
	Then there exists a partition $I_1, \dots, I_M$ of $\{1, 2, \dots, n\}$ such that 
	\begin{align*}
		\left\|\sum_{j\in I_k}f_j(x)\tau_j\right\|\leq (b-\varepsilon)\|x\|, \quad \forall x \in \mathcal{X}, \forall 1\leq k \leq M.
\end{align*} }	
\end{conjecture}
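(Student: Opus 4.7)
First observe that Conjecture \ref{121} is obtained from Conjecture \ref{12} by strengthening the upper bound hypothesis to an equality, so a resolution of Conjecture \ref{12} would give Conjecture \ref{121} as an immediate corollary. The plan is therefore to exploit the extra rigidity coming from equality. The key preliminary reduction is: since $\mathcal{X}$ is finite-dimensional and $\|S_{f,\tau}x\|=b\|x\|$ for every $x$, the operator $V\coloneqq b^{-1}S_{f,\tau}$ is an isometric isomorphism of $\mathcal{X}$, so $V^{-1}$ exists with $\|V^{-1}\|=1$. Replacing $\tau_j$ by $\tau_j'\coloneqq V^{-1}\tau_j$ yields a family with $\|\tau_j'\|\leq 1$, $\|f_j\|\leq 1$, and $S_{f,\tau'}=bI_{\mathcal{X}}$. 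Hence one may assume from the outset that the frame is \emph{exactly tight}, i.e., $\sum_{j=1}^n f_j(x)\tau_j'=bx$ for every $x\in\mathcal{X}$, which converts the problem into a question about partitioning the resolution of $bI_{\mathcal{X}}$ itself.

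Next I would transport the tight problem to a Hilbert space. By John's theorem (or equivalently Auerbach's lemma applied in a suitable form), $\mathcal{X}$ carries an auxiliary Hilbert norm $|\cdot|$ satisfying $|x|\leq \|x\|\leq \sqrt{d}\,|x|$, with a dual comparison on $\mathcal{X}^*$. Using Riesz representation in $(\mathcal{X},|\cdot|)$, identify each $f_j$ with a vector $\omega_j$; then $|\omega_j|\leq \sqrt{d}$ and $|\tau_j'|\leq 1$. The tightness identity $\sum_j f_j(x)\tau_j'=bx$ transforms into a Hilbert-space operator identity whose norm is bounded by a constant $b'=b'(b,d)$ in the auxiliary inner product. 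After rescaling the $\tau_j'$ and $\omega_j$ to have Euclidean norm at most $1$, the hypotheses of the Marcus-Spielman-Srivastava theorem (Weaver's conjecture for Hilbert spaces) are met, and their result furnishes a partition $I_1,\dots,I_M$ such that each sub-operator $\sum_{j\in I_k}\langle\cdot,\omega_j\rangle\tau_j'$ has Hilbert operator norm at most $b'-\tilde\varepsilon$. Translating this Hilbert bound back through the John comparison produces $\|\sum_{j\in I_k}f_j(x)\tau_j'\|\leq (b-\varepsilon)\|x\|$ for some $\varepsilon=\varepsilon(b,d,\tilde\varepsilon)>0$, and finally composing with $V$ returns the statement for the original $\tau_j$.

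The principal obstacle is the word ``universal'' in the statement. Both $M$ and $\varepsilon$ produced by the above scheme inherit explicit dependence on $d$ through the John distortion factor $\sqrt{d}$, whereas the analogous Hilbert conjecture admits constants depending only on $b$. Eliminating this $d$-dependence would require a genuinely Banach-space analogue of the mixed characteristic polynomial and interlacing-family machinery underlying the Marcus-Spielman-Srivastava proof, for which no adequate substitute is presently known; spectral tools such as positivity and self-adjointness, on which that machinery critically relies, are absent in a general Banach space. A natural intermediate milestone is the case $\mathcal{X}=\ell_d^p$, where the lattice structure may permit $\varepsilon$ depending only on $b$ and $p$ (in the spirit of the Clarkson inequalities used in \cite{LIULIU}); a truly universal resolution for arbitrary $\mathcal{X}$ would be a breakthrough of comparable depth to Kadison-Singer itself.
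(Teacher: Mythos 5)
The first thing to say is that the paper does not prove this statement at all: Conjecture \ref{121} is posed as an \emph{open conjecture} (one of several Weaver-type variants the author formulates for Banach spaces), so there is no proof in the paper to compare yours against. Your text is honestly labelled a proposal, and by its own final paragraph it does not close; what follows is why the gaps are genuine and not merely technical.

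Your opening reduction is fine: in finite dimensions $\|S_{f,\tau}x\|=b\|x\|$ for all $x$ does make $V=b^{-1}S_{f,\tau}$ an isometric isomorphism, and replacing $\tau_j$ by $V^{-1}\tau_j$ preserves the norm constraints and the conclusion, so you may assume $S_{f,\tau}=bI_{\mathcal{X}}$. The transport step, however, fails at the point where you claim ``the hypotheses of the Marcus--Spielman--Srivastava theorem are met.'' After identifying $f_j$ with $\omega_j$ via the auxiliary inner product, the summands become the rank-one operators $x\mapsto\langle x,\omega_j\rangle\tau_j'$ with $\omega_j\neq\tau_j'$ in general; these are not Hermitian positive semidefinite, whereas the interlacing-families machinery of Marcus, Spielman, and Srivastava applies to sums of outer products $u_ju_j^*$ and depends essentially on positivity and self-adjointness. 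There is no known analogue for non-symmetric rank-one sums --- this is precisely why the paper separately states ``strong form'' versions (Conjectures \ref{1231}--\ref{1234}) that add the hypothesis that the spectrum of $S_{f,\tau}$ lies in $[0,\infty)$. Even setting that aside, the John-ellipsoid distortion $\sqrt{d}$ contaminates both the Bessel bound and the gap $\varepsilon$, so the constants you obtain depend on $d$, which contradicts the word ``universal'' in the statement; you acknowledge this, but it means the scheme proves a strictly weaker, dimension-dependent assertion rather than Conjecture \ref{121}. Your suggestion to first treat $\mathcal{X}=\ell_d^p$ via Clarkson-type inequalities, in the spirit of the $\ell^p$ Feichtinger result cited as \cite{LIULIU}, is a reasonable intermediate target, but as written the proposal does not constitute a proof, and the conjecture remains open.
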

\begin{conjecture}\label{122}
(\textbf{Weaver's conjecture for Banach spaces - 3})	
\textbf{Let  $\mathcal{X}$ be a $d$-dimensional Banach  space.	There are universal constants $b\geq 2$ and $ b>\varepsilon > 0$ such that the following holds. Let $\{\tau_j\}_{j=1}^n$ be a collection in  $\mathcal{X}$ and  $\{f_j\}_{j=1}^n$ be a collection in  $\mathcal{X}^*$ satisfying: 
	\begin{align*}
		\|f_j\|= \|\tau_j\|=|f_j(\tau_j)|=1, \quad \forall 1\leq j \leq n
	\end{align*}
	and 
	\begin{align*}
		\left\|\sum_{j=1}^{n}f_j(x)\tau_j\right\|\leq  b\|x\|, \quad \forall x \in \mathcal{X}.
	\end{align*}
	Then there exists a partition $I_1, \dots, I_M$ of $\{1, 2, \dots, n\}$ such that 
	\begin{align*}
		\left\|\sum_{j\in I_k}f_j(x)\tau_j\right\|\leq (b-\varepsilon)\|x\|, \quad \forall x \in \mathcal{X}, \forall 1\leq k \leq M.
\end{align*} }	
\end{conjecture}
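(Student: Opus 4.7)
The plan is to reformulate the statement operator-theoretically and then try to adapt the interlacing / mixed characteristic polynomials machinery of Marcus, Spielman, and Srivastava. Introduce the rank-one operators $T_j : \mathcal{X} \to \mathcal{X}$ defined by $T_j x := f_j(x)\tau_j$. The normalization $\|f_j\| = \|\tau_j\| = |f_j(\tau_j)| = 1$ gives $\|T_j\|_{\mathrm{op}} = 1$ and $\mathrm{tr}(T_j) = f_j(\tau_j) = 1$. The hypothesis becomes $\bigl\|\sum_{j=1}^n T_j\bigr\|_{\mathrm{op}} \leq b$, and the desired conclusion is that the index set admits a partition $I_1,\dots,I_M$ with $\bigl\|\sum_{j\in I_k} T_j\bigr\|_{\mathrm{op}} \leq b - \varepsilon$ for every $k$.

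Next I would set up a probabilistic colouring. Choose i.i.d.\ uniform colours $c_j \in \{1,\dots,M\}$ and let $I_k = \{j : c_j = k\}$. The expected block operator $\mathbb{E}\sum_{j\in I_k} T_j = \tfrac{1}{M}\sum_{j=1}^n T_j$ has norm at most $b/M$, so the question reduces to a concentration estimate: how much can $\sum_{j\in I_k} T_j$ deviate from its mean in operator norm? For $M=2$ this is the natural Banach-space analogue of the Kadison–Singer partitioning problem that was resolved by Marcus–Spielman–Srivastava. One would try to iterate a successful two-colour step to obtain the general partition, as in the Hilbert setting.

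The central step is to replace the real-rootedness / barrier function arguments of MSS, which control the largest eigenvalue of a self-adjoint sum, by a genuine operator-norm statement valid on an arbitrary $d$-dimensional Banach space. I would attempt two complementary routes. The first uses John's theorem: any $d$-dimensional $\mathcal{X}$ is $\sqrt{d}$-isomorphic to $\ell^2_d$, so one can pull MSS through the isomorphism and recover the conclusion with constants that degrade with $d$; this falls short of "universal constants" but is a useful sanity check. The second route is a direct random signing argument in the Bourgain–Tzafriri spirit: show that for some choice of signs $\epsilon_j \in \{-1, +1\}$ one has $\bigl\|\sum_j \epsilon_j T_j\bigr\|_{\mathrm{op}} \leq b - \varepsilon$, and then convert each successful signing into a refinement of the partition.

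The hard part will be exactly the replacement of the Hilbertian spectral machinery. Operator norm on $\mathcal{X}$ is not a root of a real-rooted polynomial, and rank-one operators on $\mathcal{X}$ do not carry the rigid self-adjoint structure on which mixed characteristic polynomials rely; in particular the clean identity $\|T\|_{\mathrm{op}} = \lambda_{\max}(T)$ fails outside the Hilbert setting. Any complete proof therefore seems to require either a genuinely new operator-norm analogue of mixed characteristic polynomials, or an embedding step that uses the extremal duality $\|f_j\| = \|\tau_j\| = f_j(\tau_j) = 1$ to produce a Hilbertian structure with constants independent of $d$ and of $\mathcal{X}$. I expect this to be the decisive obstruction, and the reason the statement is phrased here as a conjecture.
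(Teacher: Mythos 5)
This statement is posed in the paper as an open conjecture; the paper supplies no proof of it, so there is nothing to check your argument against, and your own text concedes at the end that the decisive step is missing. What you have written is a research plan rather than a proof, and the gap is exactly where you locate it: every route you sketch stops short of the required conclusion. The uniform random colouring only controls the expectation $\tfrac{1}{M}\sum_{j=1}^{n}T_j$; without a concentration or interlacing statement for the operator norm on an arbitrary finite-dimensional Banach space, nothing forces even one colour class to satisfy $\bigl\|\sum_{j\in I_k}T_j\bigr\|\leq b-\varepsilon$. The John's theorem route is quantitatively fatal rather than merely suboptimal: transporting the Marcus--Spielman--Srivastava bound through a $\sqrt{d}$-isomorphism multiplies the relevant norms by factors growing with $d$, so the constants $b$ and $\varepsilon$ cannot be universal, which is the entire content of the conjecture. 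The random signing route inherits the same problem, since Bourgain--Tzafriri-type estimates in general Banach spaces carry cotype or dimension-dependent constants. Finally, note a small but real error of detail: $\mathrm{tr}(T_j)=f_j(\tau_j)$ has modulus one by hypothesis, but $f_j(\tau_j)$ need not equal $1$ (only $|f_j(\tau_j)|=1$), so even the trace normalization you lean on is not quite what you state.

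If you wish to make partial progress rather than a plan, the honest targets are: (a) prove the statement for $\mathcal{X}=\ell^{p}_{d}$ with constants independent of $d$ but possibly depending on $p$, where the analysis and synthesis operators give you concrete matrices to work with; or (b) prove the $d$-dependent version via John's theorem explicitly and record how the constants degrade, which would at least delimit what a genuine solution must overcome. As it stands, the proposal identifies the obstruction correctly but does not surmount it, so it cannot be accepted as a proof of the conjecture.
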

\begin{conjecture}\label{123}
(\textbf{Weaver's conjecture for Banach spaces - 4})	
\textbf{Let  $\mathcal{X}$ be a $d$-dimensional Banach  space.	There are universal constants $b\geq 2$ and $ b>\varepsilon > 0$ such that the following holds. Let $\{\tau_j\}_{j=1}^n$ be a collection in  $\mathcal{X}$ and  $\{f_j\}_{j=1}^n$ be a collection in  $\mathcal{X}^*$ satisfying: 
	\begin{align*}
		\|f_j\|= \|\tau_j\|=|f_j(\tau_j)|=1, \quad \forall 1\leq j \leq n
	\end{align*}
	and 
	\begin{align*}
		\left\|\sum_{j=1}^{n}f_j(x)\tau_j\right\|=  b\|x\|, \quad \forall x \in \mathcal{X}.
	\end{align*}
	Then there exists a partition $I_1, \dots, I_M$ of $\{1, 2, \dots, n\}$ such that 
	\begin{align*}
		\left\|\sum_{j\in I_k}f_j(x)\tau_j\right\|\leq (b-\varepsilon)\|x\|, \quad \forall x \in \mathcal{X}, \forall 1\leq k \leq M.
\end{align*} }	
\end{conjecture}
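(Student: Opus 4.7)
The plan is to translate the hypothesis into an operator-valued identity and then to attack it either by reduction to the Hilbert-space Weaver theorem of Marcus-Spielman-Srivastava or by a direct random partition argument patterned on their method. First I would record the structural consequence of the normalization $\|f_j\|=\|\tau_j\|=|f_j(\tau_j)|=1$. After multiplying each $f_j$ by a suitable unimodular scalar one may assume $f_j(\tau_j)=1$, and then the rank-one operator $T_jx:=f_j(x)\tau_j$ satisfies $T_j^2=T_j$ and $\|T_j\|=1$, i.e.\ each $T_j$ is a norm-one rank-one projection on $\mathcal{X}$. The hypothesis $\bigl\|\sum_{j} f_j(x)\tau_j\bigr\|=b\|x\|$ then says $S_{f,\tau}=\sum_{j=1}^{n}T_j=bV$ for some surjective isometry $V$ of $\mathcal{X}$. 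So Conjecture \ref{123} asks: whenever a scalar multiple of an isometry of $\mathcal{X}$ is written as a sum of $n$ norm-one rank-one projections, that sum can be split into $M$ pieces each of operator norm at most $b-\varepsilon$.

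The main steps I would carry out are the following. (1) Choose a random partition $I_1,\dots,I_M$ by assigning to each index $j$ a colour in $\{1,\dots,M\}$ independently and uniformly. (2) Use the identity $\mathbb{E}\sum_{j\in I_k}T_j=\tfrac{1}{M}S_{f,\tau}=\tfrac{b}{M}V$ to compute the mean of each block sum; since $V$ is an isometry, its norm is exactly $b/M$, which is small if $M$ is large. (3) Control the deviation of $\bigl\|\sum_{j\in I_k}T_j\bigr\|$ from its mean, either via a noncommutative Khintchine/Rudelson-type inequality in $\mathcal{X}$ or, failing that, via an interlacing-families argument on mixed characteristic polynomials of lifts of $T_j$ to suitable finite-dimensional model spaces. (4) Optimize $M$ as a function of $b$ and $\varepsilon$, and verify via a union bound over the $M$ blocks that the resulting $M$ can be chosen independently of $n$ and of $d$.

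The hard part will be step (3). In the Hilbert setting Marcus-Spielman-Srivastava control the operator norm through the top root of a real-stable polynomial, and every tool in their proof (real stability, interlacing, barrier updates) is tied to the spectral theorem, which is unavailable on a general Banach space. Two routes forward seem plausible. Route A is to embed $\mathcal{X}$ into a Euclidean space via John's ellipsoid, apply the Hilbert-space Weaver theorem to the images, and pull the partition back; this succeeds but incurs a factor of $\sqrt{d}$ in the transferred constants, in violation of the universality demanded by Conjecture \ref{123}. Route B is to replace eigenvalue control by a Banach-space concentration inequality: if $\mathcal{X}$ has nontrivial type, then $\bigl\|\sum_{j\in I_k}T_j\bigr\|$ concentrates sharply around $\tfrac{b}{M}$, and the conjecture would follow with universal constants depending only on the type constant. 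I would therefore attempt Route B first and prove the conjecture for $\mathcal{X}$ of nontrivial type, and then try to remove the type assumption by a more delicate argument; if that step is not achievable, I would propose replacing the universal constants in Conjecture \ref{123} by constants depending on the type and cotype of $\mathcal{X}$, since the universality as stated appears to go beyond any presently available concentration technology on Banach spaces.
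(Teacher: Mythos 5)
There is nothing in the paper to compare your argument against: the statement you are addressing is Conjecture \ref{123}, which the author poses as an \emph{open problem} (one of a family of Weaver-type conjectures for Banach spaces) and does not prove. The paper supplies no proof, no partial result, and no reduction for this statement; it only records the Hilbert-space antecedent (the Weaver conjecture resolved by Marcus, Spielman, and Srivastava) as motivation. So the relevant question is whether your proposal actually establishes the conjecture, and by your own account it does not. Steps (1), (2) and (4) are routine; everything rests on step (3), and you candidly concede that the interlacing-families machinery is tied to real stability and the spectral theorem, that Route A (John's ellipsoid) transfers only dimension-dependent constants and hence cannot yield the universal $b$ and $\varepsilon$ the conjecture demands, and that Route B yields at best constants depending on the type of $\mathcal{X}$. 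A plan whose decisive step is acknowledged to be unavailable is not a proof; what you have written is a reasonable research programme together with a proposed weakening of the conjecture, which is a different deliverable.

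One concrete technical slip in your setup: you cannot simultaneously normalize $f_j(\tau_j)=1$ by multiplying each $f_j$ by a unimodular scalar \emph{and} retain the hypothesis $\bigl\|\sum_{j=1}^{n}f_j(x)\tau_j\bigr\|=b\|x\|$, since replacing $f_j$ by $\lambda_j f_j$ changes the operator $\sum_j f_j(\cdot)\tau_j$ (and all the block sums in the conclusion). Hence your reformulation ``a multiple of an isometry written as a sum of norm-one rank-one \emph{projections}'' is not equivalent to the conjecture; the correct invariant formulation only gives norm-one rank-one operators $T_j$ with $|f_j(\tau_j)|=1$, i.e.\ $T_j^2=f_j(\tau_j)T_j$. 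This does not affect your steps (1)--(2), which never use idempotence, but it should be corrected if you pursue the programme. Also note that in step (2) the mean $\tfrac{b}{M}V$ having small norm is not by itself useful: the conjecture requires each block to have norm at most $b-\varepsilon$ with $\varepsilon<b$, a much weaker target than $b/M$, so the real content is entirely in the deviation bound you do not have.
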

Previous four Weaver's conjectures for Banach spaces can be stated in slightly stronger form as follows.
\begin{conjecture}\label{1231}
(\textbf{Weaver's conjecture for Banach spaces - strong form - 1})
\textbf{Let  $\mathcal{X}$ be a $d$-dimensional Banach  space.	There are universal constants $b\geq 2$ and $ b>\varepsilon > 0$ such that the following holds. Let $\{\tau_j\}_{j=1}^n$ be a collection in  $\mathcal{X}$ and  $\{f_j\}_{j=1}^n$ be a collection in  $\mathcal{X}^*$ satisfying: 
	\begin{align*}
		\|f_j\|\leq 1, ~ \|\tau_j\|\leq 1, \quad \forall 1\leq j \leq n, 
	\end{align*}
the spectrum of the  operator $S_{f,\tau}:\mathcal{X} \ni x \mapsto \sum_{j=1}^{n}f_j(x)\tau_j \in \mathcal{X}$  lies in $[0,\infty)$ 
	and 
	\begin{align*}
		\left\|\sum_{j=1}^{n}f_j(x)\tau_j\right\|\leq b\|x\|, \quad \forall x \in \mathcal{X}.
	\end{align*}
	Then there exists a partition $I_1, \dots, I_M$ of $\{1, 2, \dots, n\}$ such that 
	\begin{align*}
		\left\|\sum_{j\in I_k}f_j(x)\tau_j\right\|\leq (b-\varepsilon)\|x\|, \quad \forall x \in \mathcal{X}, \forall 1\leq k \leq M.
\end{align*} }	
\end{conjecture}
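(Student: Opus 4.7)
The plan is to adapt the Marcus--Spielman--Srivastava framework of interlacing families and mixed characteristic polynomials from the Hilbert space Weaver conjecture to this Banach space setting, using the hypothesis that the spectrum of $S_{f,\tau}$ lies in $[0,\infty)$ as a substitute for the Hermitian positivity exploited in the original proof. Writing $A_j\colon x\mapsto f_j(x)\tau_j$ for the rank-one operator built from the pair $(f_j,\tau_j)$, the hypotheses yield $\|A_j\|\leq 1$ and $S_{f,\tau}=\sum_{j=1}^{n}A_j$ has nonnegative real spectrum of operator norm at most $b$. The constants $M$ and $\varepsilon$ would be chosen in terms of $b$ exactly as in the Hilbert space case, so that a random equipartition of the rank-one summands produces blocks whose expected operator norm is strictly below $b-\varepsilon$.

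Next I would sample a uniformly random partition $I_1,\dots,I_M$ of $\{1,2,\dots,n\}$ via independent labels, form the operators $T_k:=\sum_{j\in I_k}A_j$, and study the characteristic polynomials $p_k(\lambda):=\det(\lambda I_\mathcal{X}-T_k)$ as an interlacing family indexed by the tree of partial label assignments. The goal is to establish the two structural facts that underpin MSS: that the expected polynomial $\mathbb{E}\,p_k(\lambda)$ has largest real root at most $b-\varepsilon$ for every $k$, following the spirit of the mixed characteristic polynomial bound; and that the conditional expectations along each root-to-leaf path interlace, so that some realisation of the partition simultaneously beats $b-\varepsilon$ on every block. Applying the factorisation $S_{f,\tau}=\theta_\tau\theta_f$ used in Theorem~\ref{RIESZCHAR} would allow the problem to be rephrased on $\ell^p(\mathbb{N})$, where one may hope to exploit the coordinate structure and the known Hilbert space result on $\ell^2(\mathbb{N})$.

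The principal obstacle is that real-rootedness, which is the analytic backbone of the MSS machinery, fails in general for sub-sums $\sum_{j\in J}A_j$ when $A_j$ is rank one but not self-adjoint: the spectral hypothesis on the full sum $S_{f,\tau}$ does not propagate to arbitrary sub-collections, and the polynomials appearing inside the interlacing tree can have complex roots. A direct transplant of the mixed characteristic polynomial identity is therefore unlikely to produce a real stable polynomial, and the argument will need a genuinely new multivariate stability statement adapted to biorthogonal pairs $(\{f_j\},\{\tau_j\})$, perhaps through a barrier method tailored to non-self-adjoint rank-one perturbations. The tempting shortcut of passing through an isomorphism $J\colon\mathcal{X}\to\mathcal{H}$ for some $d$-dimensional Hilbert space $\mathcal{H}$, applying the Hilbert space Weaver theorem to $JA_jJ^{-1}$, and pulling the partition back, does yield some partition with norm decay, but the resulting constants scale with the Banach--Mazur distance $d_{BM}(\mathcal{X},\mathcal{H})$ and hence depend on the dimension $d$; this cannot deliver the universal $b,\varepsilon$ the conjecture demands, and overcoming that dimension dependence is where the real difficulty lies.
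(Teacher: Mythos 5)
This statement is posed in the paper as an open conjecture: the paper supplies no proof of it, so there is no argument of the author's to compare yours against. What matters, then, is whether your proposal actually constitutes a proof, and it does not. It is a research outline: you describe the Marcus--Spielman--Srivastava strategy, and then you yourself identify the two points at which it breaks --- (a) the rank-one operators $A_j\colon x\mapsto f_j(x)\tau_j$ are not self-adjoint in any usable sense, the spectral hypothesis on the full sum $S_{f,\tau}$ does not pass to sub-sums $\sum_{j\in J}A_j$, and so the characteristic polynomials in the interlacing tree need not be real-rooted, which removes the analytic backbone of the whole method; and (b) transporting the problem to a Hilbert space via an isomorphism introduces constants controlled only by the Banach--Mazur distance, which grows with $d$ (of order $\sqrt{d}$ in the worst case), so the universal constants $b$ and $\varepsilon$ demanded by the statement cannot be obtained that way. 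Neither obstruction is overcome anywhere in the proposal; the "genuinely new multivariate stability statement adapted to biorthogonal pairs" that you invoke is precisely the missing content. Your diagnosis of where the difficulty lies is accurate and consistent with why the author leaves this as a conjecture, but a correct diagnosis of the obstacles is not a proof, and as a proof attempt this has a gap coextensive with the entire argument.
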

\begin{conjecture}\label{1232}
(\textbf{Weaver's conjecture for Banach spaces - strong form - 2})	
\textbf{Let  $\mathcal{X}$ be a $d$-dimensional Banach  space.	There are universal constants $b\geq 2$ and $ b>\varepsilon > 0$ such that the following holds. Let $\{\tau_j\}_{j=1}^n$ be a collection in  $\mathcal{X}$ and  $\{f_j\}_{j=1}^n$ be a collection in  $\mathcal{X}^*$ satisfying: 
	\begin{align*}
		\|f_j\|\leq1, ~ \|\tau_j\|\leq 1, \quad \forall 1\leq j \leq n, 
	\end{align*}
the spectrum of the  operator $S_{f,\tau}:\mathcal{X} \ni x \mapsto \sum_{j=1}^{n}f_j(x)\tau_j \in \mathcal{X}$  lies in $[0,\infty)$ 
	and 
	\begin{align*}
		\left\|\sum_{j=1}^{n}f_j(x)\tau_j\right\|=  b\|x\|, \quad \forall x \in \mathcal{X}.
	\end{align*}
	Then there exists a partition $I_1, \dots, I_M$ of $\{1, 2, \dots, n\}$ such that 
	\begin{align*}
		\left\|\sum_{j\in I_k}f_j(x)\tau_j\right\|\leq (b-\varepsilon)\|x\|, \quad \forall x \in \mathcal{X}, \forall 1\leq k \leq M.
\end{align*} }		
\end{conjecture}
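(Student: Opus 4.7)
The plan is to first simplify the hypotheses drastically. The two conditions ``spectrum of $S_{f,\tau}$ lies in $[0,\infty)$'' together with ``$\|S_{f,\tau}x\| = b\|x\|$ for every $x\in \mathcal{X}$'' actually force $S_{f,\tau}=bI_{\mathcal{X}}$. Indeed, writing $T\coloneqq S_{f,\tau}/b$, the norm identity makes $T$ a surjective isometry (we are in finite dimensions), so every eigenvalue of $T$ has modulus $1$; the hypothesis on the spectrum forces each to equal $1$, so $T-I$ is nilpotent. Writing $T=I+N$, every power $T^k$ is again an isometry and hence $\|T^k\|=1$, whereas $T^k=\sum_{j=0}^{d-1}\binom{k}{j}N^j$ grows polynomially in $k$ unless $N=0$. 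Hence $S_{f,\tau}=bI_{\mathcal{X}}$, and Conjecture \ref{1232} is equivalent to the following Banach-space $KS_2$-type statement: whenever $\sum_{j=1}^n f_j(\cdot)\tau_j = bI_{\mathcal{X}}$ with $\|f_j\|,\|\tau_j\|\le 1$, there exists a partition $I_1,\dots,I_M$ of $\{1,\dots,n\}$ with $\bigl\|\sum_{j\in I_k}f_j(\cdot)\tau_j\bigr\|_{\mathcal{X}\to\mathcal{X}}\le b-\varepsilon$ for every $k$.

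The second step is to transfer this reduced problem to a Hilbert-space instance handled by Marcus, Spielman and Srivastava. Choose an Auerbach basis of $\mathcal{X}$, equivalently a linear isomorphism $T_0\colon\mathcal{X}\to\ell^2_d$ with $\|T_0\|\,\|T_0^{-1}\|\le\sqrt{d}$ (John's theorem). Set $\widetilde f_j\coloneqq f_j T_0^{-1}$ and $\widetilde \tau_j\coloneqq T_0\tau_j$; the hypothesis becomes $\sum_j \widetilde f_j(\cdot)\widetilde \tau_j = bI_{\ell^2_d}$, each rank-one summand has Hilbertian operator norm at most $\sqrt{d}$, and after identifying $\widetilde f_j$ with a vector via Riesz representation one has a sum of positive semidefinite rank-one operators of trace at most $d$. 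The Marcus-Spielman-Srivastava resolution of Weaver's $KS_2$ then yields a partition $I_1,\dots,I_M$ with $\bigl\|\sum_{j\in I_k}\widetilde f_j\otimes \widetilde \tau_j\bigr\|_{\ell^2_d\to \ell^2_d}\le b'-\varepsilon'$ for appropriate constants. Translating back to the Banach norm costs the distortion factor $\|T_0\|\,\|T_0^{-1}\|$.

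The main obstacle will be precisely this last step: the Marcus-Spielman-Srivastava spectral gap is absolute, whereas the conversion from the Hilbert operator norm to the $\mathcal{X}$-operator norm loses a factor of $\sqrt{d}$, so a naive chain of inequalities does not close the loop for large $d$. Overcoming this appears to demand either an intrinsic ``mixed characteristic polynomial'' argument adapted directly to the pairs $(f_j,\tau_j)$ in $\mathcal{X}$, where the interlacing family is built from the partial sums $\sum_{j\in I_k}f_j\otimes\tau_j$ viewed as operators on $\mathcal{X}$ itself (rather than on a Hilbertian lift), or the introduction of type/cotype constants of $\mathcal{X}$ in place of the crude $\sqrt d$ factor, so that well-behaved families (such as $\ell^p_d$) admit dimension-free bounds. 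A natural intermediate target, which we propose tackling first, is the case $\mathcal{X}=\ell^p_d$, where the $p$-orthonormal basis structure developed in the paper gives a canonical coordinate system in which the rank-one summands can be compared to their Hilbertian counterparts through Clarkson-type inequalities, mirroring the route by which Liu and Liu handled the tight $p$-frame Feichtinger conjecture on $\ell^p(\mathbb{N})$.
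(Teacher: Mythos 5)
This statement is a \emph{conjecture}: the paper does not prove it, it formulates it as an open problem, so there is no ``paper's own proof'' to match. More importantly, what you have written is not a proof either. Your opening reduction is correct and worth keeping: if the spectrum of $S_{f,\tau}$ lies in $[0,\infty)$ and $\|S_{f,\tau}x\|=b\|x\|$ for all $x$, then $T:=S_{f,\tau}/b$ is a surjective isometry of a finite-dimensional space whose eigenvalues must all equal $1$, and the boundedness of $\|T^k\|$ kills the nilpotent part, so $S_{f,\tau}=bI_{\mathcal{X}}$. But the two subsequent steps do not close. First, after transporting to $\ell^2_d$ via John's theorem, the rank-one summands $\widetilde f_j\otimes\widetilde\tau_j$ are \emph{not} positive semidefinite: the functional $f_j$ and the vector $\tau_j$ are independent data in a p-ASF, so the Riesz representer of $\widetilde f_j$ has no reason to be a positive multiple of $\widetilde\tau_j$, and the identity $\sum_j\widetilde f_j\otimes\widetilde\tau_j=bI$ does not force each summand to be Hermitian. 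The Marcus--Spielman--Srivastava theorem (Weaver's $KS_2$) applies only to sums of operators of the form $u_ju_j^*$, so it simply does not apply here. This decoupling of $f_j$ from $\tau_j$ is exactly the obstruction that makes the Banach-space statement a genuinely new conjecture rather than a corollary of the Hilbert-space result; the paper makes the analogous point after its Conjecture \ref{FB}. Second, even where a Hilbertian bound could be extracted, the distortion $\|T_0\|\,\|T_0^{-1}\|\le\sqrt d$ is dimension-dependent, while the conjecture demands universal constants $b$ and $\varepsilon$ independent of $d$; you acknowledge this yourself. A sketch that ends by naming the two obstacles it cannot overcome is a research plan, not a proof, and as it stands the conjecture remains open.
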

\begin{conjecture}\label{1233}
(\textbf{Weaver's conjecture for Banach spaces - strong form - 3})	
\textbf{Let  $\mathcal{X}$ be a $d$-dimensional Banach  space.	There are universal constants $b\geq 2$ and $ b>\varepsilon > 0$ such that the following holds. Let $\{\tau_j\}_{j=1}^n$ be a collection in  $\mathcal{X}$ and  $\{f_j\}_{j=1}^n$ be a collection in  $\mathcal{X}^*$ satisfying: 
	\begin{align*}
		\|f_j\|= \|\tau_j\|=|f_j(\tau_j)|=1, \quad \forall 1\leq j \leq n,
	\end{align*}
the spectrum of the  operator $S_{f,\tau}:\mathcal{X} \ni x \mapsto \sum_{j=1}^{n}f_j(x)\tau_j \in \mathcal{X}$  lies in $[0,\infty)$ 	and 
	\begin{align*}
		\left\|\sum_{j=1}^{n}f_j(x)\tau_j\right\|\leq   b\|x\|, \quad \forall x \in \mathcal{X}.
	\end{align*}
	Then there exists a partition $I_1, \dots, I_M$ of $\{1, 2, \dots, n\}$ such that 
	\begin{align*}
		\left\|\sum_{j\in I_k}f_j(x)\tau_j\right\|\leq (b-\varepsilon)\|x\|, \quad \forall x \in \mathcal{X}, \forall 1\leq k \leq M.
\end{align*} }		
\end{conjecture}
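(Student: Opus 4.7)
The plan is to reduce this conjecture to the Marcus--Spielman--Srivastava (MSS) resolution of Weaver's $\mathrm{KS}_2$ conjecture in the Hilbert space setting, exploiting the two hypotheses that distinguish the strong form from Conjecture~\ref{122}: the unit norm/biorthogonality condition $\|f_j\|=\|\tau_j\|=|f_j(\tau_j)|=1$ and the requirement that $\sigma(S_{f,\tau})\subset[0,\infty)$. Together these force each rank one operator $P_j:\mathcal{X}\to\mathcal{X}$, $P_jx\coloneqq f_j(x)\tau_j$, to be a quasi-projection (since $P_j^2=f_j(\tau_j)P_j$) and equip $S_{f,\tau}=\sum_{j=1}^n P_j$ with a Jordan form having nonnegative diagonal. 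Thus $S_{f,\tau}$ closely mimics a positive semidefinite sum of rank-one Hermitian operators, which is exactly the object that MSS control.

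First I would pass from $\mathcal{X}$ to a Hilbertian structure. Let $\langle\cdot,\cdot\rangle_J$ denote the inner product associated with the John ellipsoid of the unit ball of $\mathcal{X}$ and set $H\coloneqq(\mathcal{X},\langle\cdot,\cdot\rangle_J)$; the formal identity $J:\mathcal{X}\to H$ is an isomorphism with $\|J\|\,\|J^{-1}\|\leq\sqrt{d}$. Put $\widetilde{\tau}_j\coloneqq J\tau_j$ and let $\widetilde{f}_j\in H$ be the Riesz representer of $f_j\circ J^{-1}$. Then $JP_jJ^{-1}=\widetilde{f}_j\otimes\widetilde{\tau}_j$ on $H$, and $T\coloneqq\sum_j\widetilde{f}_j\otimes\widetilde{\tau}_j$ is similar to $S_{f,\tau}$, hence has spectrum in $[0,\infty)$ and operator norm at most $\sqrt{d}\,b$. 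The positive-spectrum hypothesis allows us to replace $T$ by a genuine positive self-adjoint operator with the same eigenvalue multiset (via its Jordan decomposition), and then invoke the MSS theorem to produce a partition $I_1,\dots,I_M$ of $\{1,\ldots,n\}$ on which the partial sums have operator norm at most $\sqrt{d}\,b-\varepsilon_0$. Transferring back through $J^{-1}$ yields a bound of the form $\sqrt{d}(\sqrt{d}\,b-\varepsilon_0)\|x\|=(db-\sqrt{d}\,\varepsilon_0)\|x\|$ for $\|\sum_{j\in I_k}P_jx\|$.

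The principal obstacle is obtaining \emph{universal}, dimension-independent constants $b$ and $\varepsilon$, as the conjecture demands. The John embedding above picks up a factor $\sqrt{d}$ from the Banach--Mazur distortion and therefore cannot possibly deliver the bound $(b-\varepsilon)\|x\|$ with $\varepsilon$ independent of $d$; at best it yields a qualitative partition with dimension-dependent constants. To repair this, I would instead try to apply the interlacing families/mixed characteristic polynomial machinery of MSS directly to the non-Hermitian rank-one matrices $P_j$ acting on $\mathbb{K}^d$ (after a choice of basis for $\mathcal{X}$). The observations that make such an adaptation plausible are that each $P_j$ is rank one with $\operatorname{tr}(P_j)=f_j(\tau_j)$ of modulus one, that the characteristic polynomial $\det(zI-\sum_{j\in I}P_j)$ has the same determinantal structure the MSS method handles, and that the hypothesis $\sigma(S_{f,\tau})\subset[0,\infty)$ supplies precisely the real-rootedness that the interlacing argument requires. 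The hardest and most delicate step will then be converting an upper bound on the largest root of the characteristic polynomial into a genuine bound on the operator norm $\|\sum_{j\in I_k}P_j\|_{\mathcal{X}\to\mathcal{X}}$; on a Banach space the spectral radius can be strictly smaller than the operator norm, and bridging this gap is where the geometry of $\mathcal{X}$ (for instance its type-$2$ constant or an appropriate comparison with the $\ell^p$ norm on the coefficients) will inevitably have to enter.
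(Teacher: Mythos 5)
The statement you are addressing is not proved in the paper at all: it is posed there as an open conjecture (one of the several Weaver-type conjectures the author formulates for Banach spaces), so there is no proof in the paper to compare against, and any complete argument would be a new result rather than a reconstruction. Judged on its own terms, your proposal is a program with two admitted and unresolved gaps, not a proof. The first gap is the one you name yourself: the John-ellipsoid reduction incurs a Banach--Mazur distortion of order $\sqrt{d}$, so it can only yield a partition bound with dimension-dependent constants, whereas the conjecture explicitly demands universal constants $b$ and $\varepsilon$ independent of $d$. That route therefore proves, at best, a strictly weaker statement.

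The second gap is more serious than the one you flag at the end. The hypothesis places the spectrum of the \emph{full} operator $S_{f,\tau}=\sum_{j=1}^{n}P_j$ in $[0,\infty)$; it says nothing about the partial sums $\sum_{j\in I}P_j$ for proper subsets $I$, and for non-Hermitian rank-one operators $P_j=f_j(\cdot)\tau_j$ these partial sums can have genuinely complex spectrum. The Marcus--Spielman--Srivastava interlacing argument needs real-rootedness (indeed real stability) of the mixed characteristic polynomials attached to \emph{every} node of the interlacing tree, and in the Hilbert case this is supplied by the positive semidefiniteness of each summand $\tau_j\tau_j^{*}$ --- a structural property that $f_j(\cdot)\tau_j$ does not have, and that your hypotheses do not restore. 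Finally, even granting a bound on the largest root, the passage to a bound on $\bigl\|\sum_{j\in I_k}P_j\bigr\|_{\mathcal{X}\to\mathcal{X}}$ is open, since on a general Banach space the spectral radius of a non-normal operator can be strictly smaller than its norm; your sketch names this obstruction but offers no mechanism to close it. In short, the proposal correctly identifies the difficulties that make this a conjecture rather than a theorem, but it does not overcome any of them.
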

\begin{conjecture}\label{1234}
(\textbf{Weaver's conjecture for Banach spaces - strong form - 4})	
\textbf{Let  $\mathcal{X}$ be a $d$-dimensional Banach  space.	There are universal constants $b\geq 2$ and $ b>\varepsilon > 0$ such that the following holds. Let $\{\tau_j\}_{j=1}^n$ be a collection in  $\mathcal{X}$ and  $\{f_j\}_{j=1}^n$ be a collection in  $\mathcal{X}^*$ satisfying: 
	\begin{align*}
		\|f_j\|= \|\tau_j\|=|f_j(\tau_j)|=1, \quad \forall 1\leq j \leq n,
	\end{align*}
the spectrum of the  operator $S_{f,\tau}:\mathcal{X} \ni x \mapsto \sum_{j=1}^{n}f_j(x)\tau_j \in \mathcal{X}$  lies in $[0,\infty)$ 	and 
	\begin{align*}
		\left\|\sum_{j=1}^{n}f_j(x)\tau_j\right\|=  b\|x\|, \quad \forall x \in \mathcal{X}.
	\end{align*}
	Then there exists a partition $I_1, \dots, I_M$ of $\{1, 2, \dots, n\}$ such that 
	\begin{align*}
		\left\|\sum_{j\in I_k}f_j(x)\tau_j\right\|\leq (b-\varepsilon)\|x\|, \quad \forall x \in \mathcal{X}, \forall 1\leq k \leq M.
\end{align*} }		
\end{conjecture}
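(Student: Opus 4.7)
\emph{Proof proposal.} The plan is to exploit the two rigid hypotheses --- the norm equality $\|S_{f,\tau}x\| = b\|x\|$ and the spectrum condition --- to reduce this to a discrepancy-type partition statement for rank-one operators summing to a scalar multiple of the identity, and then to transfer it to the Hilbert-space Weaver's conjecture proved by Marcus, Spielman, and Srivastava.

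First I would show that $S_{f,\tau} = b I_{\mathcal{X}}$. The equality $\|S_{f,\tau}x\| = b\|x\|$ makes $b^{-1}S_{f,\tau}$ a surjective isometry on the finite-dimensional space $\mathcal{X}$, so its spectrum lies on the unit circle; combined with $\sigma(S_{f,\tau}) \subseteq [0,\infty)$ this forces $\sigma(b^{-1}S_{f,\tau}) \subseteq \{1\}$. Writing $b^{-1}S_{f,\tau} = I + N$ with $N$ nilpotent, any nonzero Jordan chain would produce vectors $x,y$ with $Nx = y$, $Ny = 0$, and hence $(I+N)^k x = x + k y$, violating the isometry property. So $N = 0$ and $S_{f,\tau} = b I_{\mathcal{X}}$.

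Next, set $T_j := f_j(\cdot)\tau_j$. The hypotheses $\|f_j\| = \|\tau_j\| = |f_j(\tau_j)| = 1$ give $\|T_j\| = 1$ and $T_j^2 = f_j(\tau_j) T_j$, so each $T_j$ is a unit-norm (scalar multiple of a) rank-one projection, and $\sum_{j=1}^{n} T_j = b I_{\mathcal{X}}$. The conjecture thus becomes a purely Banach-space discrepancy statement: partition $\{1,\ldots,n\}$ so that each $\|\sum_{j \in I_k} T_j\| \leq b - \varepsilon$. To reach a Hilbertian setting, I would endow $\mathcal{X}$ with the inner product arising from the John ellipsoid of its unit ball, producing an auxiliary Hilbert space $\mathcal{H}$ on the same underlying vector space with $\|\cdot\|_{\mathcal{H}} \leq \|\cdot\|_{\mathcal{X}} \leq \sqrt{d}\,\|\cdot\|_{\mathcal{H}}$. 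After absorbing unimodular scalars so that each $T_j$ is an honest projection, the identity $\sum_j T_j = bI$ can be read in $\mathcal{H}$, and the Marcus--Spielman--Srivastava Weaver theorem yields a partition with the required $\mathcal{H}$-operator-norm bound, which I would then transport back to $\mathcal{X}$.

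The principal obstacle is that the John distortion $\sqrt{d}$ enters every norm conversion, so the naive transfer yields only $d$-dependent bounds and fails to exhibit \emph{universal} constants $b,\varepsilon$ as the conjecture demands. Overcoming this will likely require either a finer exploitation of the rigidity carried by the unit-norm rank-one projection structure of the $T_j$ (they are not arbitrary Hilbert rank-one operators but spectral-like summands of the identity, with $T_j^2$ proportional to $T_j$), or an intrinsic mixed-characteristic-polynomial argument performed directly with the operator norm of $\mathcal{X}$ in place of the Hilbert spectral norm. A reasonable first test case is $\mathcal{X} = \ell^p_d$ where one may ask whether the MSS interlacing-polynomial machinery admits an $\ell^p$ analogue; a positive answer there would strongly suggest the general Banach-space form, while a negative answer would likely necessitate restricting the conjecture to spaces of bounded Banach--Mazur distance from Hilbert space.
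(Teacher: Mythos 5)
You should first be aware that the paper does not prove this statement: it is Conjecture~\ref{1234}, one of the Weaver-type statements the author merely \emph{formulates} and leaves open (the paper's closing problem even asks what to do if it fails in some dimension). So there is no proof in the paper to compare against, and a correct submission would have to either prove or refute an open conjecture; your proposal does neither.

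That said, your opening reduction is correct and is the most useful part of the note: in finite dimensions the equality $\left\|S_{f,\tau}x\right\| = b\|x\|$ together with $\sigma(S_{f,\tau}) \subseteq [0,\infty)$ does force $S_{f,\tau} = bI_{\mathcal{X}}$ (the isometry pins the spectrum to the unit circle, positivity pins it to $\{1\}$, and the Jordan-chain growth argument kills the nilpotent part). The gaps come afterwards. First, you cannot ``absorb unimodular scalars so that each $T_j$ is an honest projection'' without losing the hypothesis: replacing $T_j = f_j(\cdot)\tau_j$ by $P_j = f_j(\tau_j)^{-1}T_j$ changes the sum, so $\sum_j P_j$ need no longer equal $bI_{\mathcal{X}}$, and the Weaver-type input you intend to feed into Marcus--Spielman--Srivastava is no longer present. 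Second, and fatally, the John-ellipsoid transfer costs a Banach--Mazur distortion of order $\sqrt{d}$ in each direction: even granting a Hilbert-space partition with defect $\varepsilon$ in the Euclidean operator norm, the bound transported back to $\mathcal{X}$ degrades to something like $b - \varepsilon/d$, and the upper bound needed to invoke the Hilbert-space theorem is itself only available with a $d$-dependent constant. Since the entire content of the conjecture is the universality of $b$ and $\varepsilon$ over all dimensions and all norms, this route cannot close; you concede this yourself, which is honest, but it means what you have is a research programme (with a sensible first test case in $\ell^p_d$ and a sensible fallback of restricting to spaces of bounded Banach--Mazur distance from Hilbert space), not a proof.
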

Here are  Akemann-Weaver conjectures  for Banach space (see \cite{BOWNIK2018PRO, AKEMANNWEAVER2014} for Hilbert spaces).
\begin{conjecture}\label{AW}
(\textbf{Akemann-Weaver conjecture for Banach spaces}) \textbf{ Let  $\mathcal{X}$ be a  Banach  space (finite or infinite dimensional). There exists a universal constant $c$  such that the following holds. 	Let 	$ (\{f_n \}_{n}, \{\tau_n \}_{n}) $ be a p-approximate Bessel sequence  with Bessel bound 1  for $\mathcal{X}$ satisfying
	\begin{align*}
	\sup_{n\in \mathbb{N}}\|f_n\|^q\leq \varepsilon, \quad 	\sup_{n\in \mathbb{N}}\|\tau_n\|^s\leq \varepsilon,
	\end{align*}
 for some $\epsilon>0$, for some $q,s>0$. Let $\{r_n \}_{n}$ be any sequence in [0,1]. Then there exists a subset $\mathbb{M} \subseteq \mathbb{N}$ and a $d>0$  such that 
\begin{align*}
	\left\| \sum_{n\in \mathbb{M}}f_n(x)\tau_n-\sum_{n=1}^{\infty}r_nf_n(x)\tau_n\right\|\leq c \varepsilon^\frac{1}{d}\|x\|, \quad \forall x \in \mathcal{X}.
\end{align*}
i.e., 
\begin{align*}
	\left\| \sum_{n\in \mathbb{M}}f_n(\cdot)\tau_n-\sum_{n=1}^{\infty}r_nf_n(\cdot)\tau_n\right\|\leq c \varepsilon^\frac{1}{d}.
\end{align*}}
\end{conjecture}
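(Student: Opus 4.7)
The plan is to combine a randomized selection with Banach-space-valued concentration inequalities, in parallel with the randomized rounding strategy that underlies the Hilbert space Akemann--Weaver theorem (proved by Marcus--Spielman--Srivastava via interlacing families of polynomials). The key observation is that, if one selects indices independently with the prescribed probabilities $r_n$, the target quantity is the operator norm of a centered sum of independent rank-one operator increments, which is the natural object for a Bernstein- or chaining-type bound.

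First I would reduce to the finite-dimensional truncation. Since the Bessel bound is $1$, for any prescribed accuracy $\delta > 0$ one can choose $N = N(\delta)$ so that the tail $\sum_{n > N} r_n f_n(\cdot)\tau_n$ is absorbed into the error, reducing the problem to selecting $\mathbb{M} \subseteq \{1,\ldots,N\}$. Then I would randomize: let $\xi_1,\ldots,\xi_N$ be independent Bernoullis with $\mathbb{P}(\xi_n = 1) = r_n$ and set $\mathbb{M} = \{n : \xi_n = 1\}$, so that
\begin{equation*}
\mathbb{E}\Bigl[\sum_{n \in \mathbb{M}} f_n(x)\tau_n\Bigr] = \sum_{n=1}^{N} r_n f_n(x)\tau_n.
\end{equation*}
The deviation to be controlled is then $\Delta(x) = \sum_{n=1}^{N} (\xi_n - r_n) f_n(x)\tau_n$, whose pointwise summands are uniformly bounded by $\|f_n\|\,\|\tau_n\|\,\|x\| \leq \varepsilon^{1/q+1/s}\|x\|$ by hypothesis.

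Next I would symmetrize with a Rademacher sequence and estimate $\mathbb{E}\sup_{\|x\|\leq 1}\|\Delta(x)\|$ by combining a pointwise Hoeffding bound on $\|\Delta(x)\|$ for each fixed $x$ with a chaining/covering argument over an $\eta$-net of the unit ball of $\mathcal{X}$, producing a bound of the form $c\,\varepsilon^{1/d}$ with $d$ depending on $p,q,s$ and on the entropy of the rank-one operators $f_n \otimes \tau_n$ seen as elements of $B(\mathcal{X})$. A derandomization (a first-moment argument, or the method of pessimistic estimators applied to the conditional Laplace transform of $\|\Delta\|$) then extracts a deterministic $\mathbb{M}$ achieving the claimed bound.

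The main obstacle is the absence of Banach-space analogues of the tools that force the sharp $\sqrt{\varepsilon}$ exponent in the Hilbert case: the MSS interlacing-polynomial method exploits the spectral theory of Hermitian matrices, and noncommutative Khintchine inequalities require a Hilbert-space structure. I therefore expect that in full generality one must either impose geometric hypotheses on $\mathcal{X}$ (nontrivial type and cotype, or the UMD property) in order to invoke a matrix-Bernstein substitute, or accept an exponent $1/d$ that depends on those geometric invariants together with the dimension $\dim \mathcal{X}$. The infinite-dimensional case stated in the conjecture is especially delicate, because the unit ball has infinite metric entropy and the naive chaining estimate diverges; circumventing this step, perhaps by restricting to the closed subspace reached by the first $N$ summands before carrying out the chaining, is the part of the argument I see as the real heart of the conjecture.
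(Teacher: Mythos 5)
The statement you are addressing is Conjecture \ref{AW} of the paper: it is formulated there as an \emph{open} conjecture and the paper supplies no proof, so there is nothing on the paper's side to compare your argument with. Judged on its own terms, your proposal is a strategy outline rather than a proof, and you yourself concede that the decisive steps are missing. Two gaps are concrete enough to name. First, the reduction to a finite truncation is false as stated: having Bessel bound $1$ does not make the tail operator $\sum_{n>N} r_n f_n(\cdot)\tau_n$ small in operator norm. Take $\mathcal{X}=\ell^p(\mathbb{N})$, $f_n=\zeta_n$, $\tau_n=e_n$ and $r_n=\tfrac{1}{2}$; then $\left\|\sum_{n>N} r_n f_n(\cdot)\tau_n\right\|=\tfrac{1}{2}$ for every $N$, so no choice of $N(\delta)$ absorbs the tail into an error of size $c\,\varepsilon^{1/d}$ (note also that the hypotheses only force $\|\tau_n\|\leq\varepsilon^{1/s}$, not summability of anything). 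Second, and more fundamentally, the randomize-and-concentrate scheme cannot deliver a \emph{universal} constant $c$ and an exponent $1/d$ independent of the number of vectors and of $\dim\mathcal{X}$: a pointwise Hoeffding bound combined with a union bound over an $\eta$-net of the unit ball introduces a factor growing with the metric entropy of that ball, hence with the dimension, and even in the Hilbert-space setting the expectation/concentration route is known to lose logarithmic factors in the dimension --- that failure is precisely what forced Marcus, Spielman and Srivastava to introduce interlacing families, a method with no known Banach-space analogue. Your closing paragraph correctly identifies these obstructions, but identifying them does not circumvent them; as written, the argument establishes nothing beyond what is already implicit in the conjecture being open, and any honest write-up should present this as a heuristic discussion rather than a proof.
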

\begin{conjecture}\label{AW2}
(\textbf{Akemann-Weaver conjecture for Banach spaces - strong form})		\textbf{ Let  $\mathcal{X}$ be a Banach  space (finite or infinite dimensional). There exists a universal constant $c$  such that the following holds. 	Let 	$ (\{f_n \}_{n}, \{\tau_n \}_{n}) $ be a p-approximate Bessel sequence with Bessel bound 1   for $\mathcal{X}$ satisfying 
	\begin{align*}
		\sup_{n\in \mathbb{N}}\|f_n\|^q\leq \varepsilon, \quad 	\sup_{n\in \mathbb{N}}\|\tau_n\|^s\leq \varepsilon,
	\end{align*}
 for some $\epsilon>0$, for some $q,s>0$ and the spectrum of the  operator $S_{f,\tau}:\mathcal{X} \ni x \mapsto \sum_{n=1}^{\infty}f_n(x)\tau_n \in \mathcal{X}$  lies in $[0,\infty)$. Let $\{r_n \}_{n}$ be any sequence in [0,1]. Then there exists a subset $\mathbb{M} \subseteq \mathbb{N}$ and a $d>0$  such that 
	\begin{align*}
		\left\| \sum_{n\in \mathbb{M}}f_n(x)\tau_n-\sum_{n=1}^{\infty}r_nf_n(x)\tau_n\right\|\leq c \varepsilon^\frac{1}{d}\|x\|, \quad \forall x \in \mathcal{X},
\end{align*}
i.e., 
\begin{align*}
	\left\| \sum_{n\in \mathbb{M}}f_n(\cdot)\tau_n-\sum_{n=1}^{\infty}r_nf_n(\cdot)\tau_n\right\|\leq c \varepsilon^\frac{1}{d}.
\end{align*}}
\end{conjecture}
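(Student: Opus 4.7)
The plan is to model the argument on the proof of the Akemann-Weaver conjecture for Hilbert spaces (which follows from the Marcus-Spielman-Srivastava breakthrough \cite{MARCUSSPIELMANSRIVASTAVA2}, cf.\ \cite{AKEMANNWEAVER2014, BOWNIK2018PRO}), using the positive-spectrum hypothesis on $S_{f,\tau}$ as a substitute for the positive semidefinite ordering available in the Hilbert space case. Conjecture \ref{AW2} is sharper than Conjecture \ref{AW} precisely because the spectral constraint restores enough ``self-adjoint-like'' structure to hope for interlacing-type arguments.

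First, I would reduce to a finite-dimensional problem. Since $(\{f_n\}_n,\{\tau_n\}_n)$ is a p-approximate Bessel sequence with bound $1$, the tails $x\mapsto\sum_{n>N}f_n(x)\tau_n$ converge to $0$ in operator norm, so (at the cost of an absorbable error) we may truncate to $n\leq N$ and work in the finite-dimensional subspace $\mathcal{Y}:=\overline{\operatorname{span}}\{\tau_n\}_{n=1}^{N}$ with duality through $\mathcal{Y}^{*}$. This reduces the conjecture to a paving statement about $N$ rank-one operators whose factors $f_n,\tau_n$ satisfy $\|f_n\|^{q},\|\tau_n\|^{s}\leq\varepsilon$.

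Next, I would randomize the selection: let $\xi_1,\dots,\xi_N$ be independent Bernoulli variables with $\mathbb{P}(\xi_n=1)=r_n$ and set $\mathbb{M}:=\{n:\xi_n=1\}$. Then
\begin{equation*}
\mathbb{E}\Bigl[\sum_{n\in\mathbb{M}}f_n(x)\tau_n\Bigr]=\sum_{n=1}^{N}r_n f_n(x)\tau_n,
\end{equation*}
so it suffices to exhibit a realization for which the random operator $T:=\sum_{n=1}^{N}(\xi_n-r_n)f_n(\cdot)\tau_n$ has operator norm at most $c\varepsilon^{1/d}$. My proposed route is to combine, for fixed unit vectors $x\in\mathcal{Y}$ and $\phi\in\mathcal{Y}^{*}$, a Bernstein/noncommutative-Khintchine estimate on the scalar random variable $\phi(Tx)=\sum_{n}(\xi_n-r_n)f_n(x)\phi(\tau_n)$ with an $\varepsilon$-net argument over the unit spheres of $\mathcal{Y}$ and $\mathcal{Y}^{*}$. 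The positive-spectrum condition enters here by allowing a symmetric square-root-type decomposition of each $f_n(\cdot)\tau_n$ relative to a basis of $\mathcal{Y}$ adapted to $S_{f,\tau}$, so that the variance terms $\sum_{n}r_n(1-r_n)|f_n(x)|^{2}|\phi(\tau_n)|^{2}$ are controlled uniformly by the Bessel bound $1$ and the smallness $\varepsilon$.

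The main obstacle I expect is calibrating the $\varepsilon$-net exponent against the $\varepsilon^{1/d}$ bound: net cardinalities on $\mathcal{Y}$ and $\mathcal{Y}^{*}$ grow like $(3/\delta)^{\dim\mathcal{Y}}$, so the deviation inequality must beat this by a gap that only the smallness of the factors $\|f_n\|^{q},\|\tau_n\|^{s}\leq\varepsilon$ can provide. The flexibility of $d$ is what buys us room: choosing $d$ as a function of $q$, $s$ and the type/cotype constants of $\mathcal{X}$ should, with care, allow the concentration bound to dominate the metric entropy. Making this truly \emph{universal} in $c$ (independent of $\mathcal{X}$) seems to require either restricting to spaces of nontrivial type, or genuinely new ideas beyond the interlacing polynomial method, since the latter is intrinsically tied to real-rootedness of Hermitian characteristic polynomials and does not transfer off the Hilbert space setting in any known way.
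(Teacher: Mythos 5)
Conjecture \ref{AW2} is stated in the paper as an open conjecture; the paper contains no proof of it, so there is nothing to compare your argument against. The only question is whether your proposal actually establishes the statement, and it does not.

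The central gap is the union-bound step. After randomizing the selection with independent Bernoulli variables and reducing to bounding $T=\sum_n(\xi_n-r_n)f_n(\cdot)\tau_n$ in operator norm, you propose a scalar deviation inequality for $\phi(Tx)$ combined with $\varepsilon$-nets over the unit spheres of $\mathcal{Y}$ and $\mathcal{Y}^{*}$. The nets have cardinality exponential in $\dim\mathcal{Y}$, which can be as large as $N$, while the deviation bound for a fixed pair $(x,\phi)$ is governed by $\sup_n\|f_n\|\,\|\tau_n\|\leq\varepsilon^{1/q+1/s}$ and the Bessel bound --- quantities that do \emph{not} improve as $N$ grows. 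Concentration therefore cannot beat the metric entropy uniformly in $N$, and the freedom in choosing $d$ does not repair this: $d$ only rescales the target $c\varepsilon^{1/d}$, it does not make the failure probability summable against a net of size exponential in $N$. This is precisely the obstruction that defeated random-selection attacks on Kadison--Singer and Weaver's conjecture for decades; the Marcus--Spielman--Srivastava argument succeeds exactly because it replaces the union bound by the method of interlacing families, which, as you concede yourself, is tied to real-rootedness of characteristic polynomials of Hermitian matrices and has no known Banach-space analogue. A further problem for $p\neq 2$ is that the natural variance term $\sum_n r_n(1-r_n)|f_n(x)|^2|\phi(\tau_n)|^2$ is not controlled by the p-approximate Bessel bound, which only constrains $\ell^p$-type sums.

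Two auxiliary steps are also unjustified. The finite-dimensional reduction assumes the tail operators $x\mapsto\sum_{n>N}f_n(x)\tau_n$ tend to $0$ in operator norm, but for a p-approximate Bessel sequence one only gets strong (pointwise) convergence of the defining series, so the truncation error is not absorbable as claimed. And the assertion that spectrum of $S_{f,\tau}$ in $[0,\infty)$ yields a ``symmetric square-root-type decomposition'' has no content in a general Banach space: positivity of the spectrum provides neither a self-adjoint structure nor a square root with useful norm bounds. Your closing paragraph already acknowledges that the argument does not close without genuinely new ideas; as written, the proposal is a plausible research programme, not a proof, and the statement remains open --- which is exactly how the paper presents it.
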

  We now formulate three more conjectures which are motivated from   results of Casazza  \cite{CASAZZATHREE} stated as follows.
  \begin{theorem}\cite{CASAZZATHREE}
  	\textbf{Every Riesz basis for a Hilbert space can be written as a  linear combination of two orthonormal bases.}
  \end{theorem}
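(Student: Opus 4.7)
The plan is to reduce the statement to the classical fact that every contraction on a Hilbert space is an average of two unitary operators. By Definition \ref{RIESZ}, the Riesz basis $\{\tau_n\}_n$ can be written as $\tau_n = T\omega_n$ where $T:\mathcal{H}\to\mathcal{H}$ is bounded invertible and $\{\omega_n\}_n$ is an orthonormal basis. It therefore suffices to decompose $T$ as $T = aU_1 + bU_2$ for scalars $a,b$ and unitary operators $U_1, U_2$, since then $\tau_n = a(U_1\omega_n) + b(U_2\omega_n)$ and each of $\{U_1\omega_n\}_n$ and $\{U_2\omega_n\}_n$ is an orthonormal basis for $\mathcal{H}$.

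To produce such a decomposition I first normalize: set $S := T/\|T\|$, so that $\|S\|\leq 1$. Next take the polar decomposition $S = W|S|$, where $W$ is unitary (not merely a partial isometry) because $S$ is invertible. Since $|S|$ is positive with spectrum contained in $[0,1]$, the continuous functional calculus applied to $\arccos$ produces a self-adjoint operator $A$ for which $|S| = \cos A = \tfrac{1}{2}(e^{iA}+e^{-iA})$. Multiplying on the left by $W$ yields
\[ S = \tfrac{1}{2}\bigl(We^{iA} + We^{-iA}\bigr), \]
a convex combination of the two unitaries $U_1 := We^{iA}$ and $U_2 := We^{-iA}$. Rescaling gives $T = \tfrac{\|T\|}{2}(U_1+U_2)$, and defining $e_n := U_1\omega_n$ and $f_n := U_2\omega_n$ produces two orthonormal bases for $\mathcal{H}$ satisfying $\tau_n = \tfrac{\|T\|}{2}e_n + \tfrac{\|T\|}{2}f_n$.

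The principal ingredient is the functional-calculus step that realizes the positive contraction $|S|$ as $\cos A$; this is routine once one invokes the spectral theorem. I do not foresee any substantial obstacle beyond bookkeeping, but the delicate point worth highlighting is that the invertibility of $T$ is essential here: it is what guarantees that the polar factor $W$ is genuinely unitary rather than only a partial isometry. Without invertibility the argument would produce $T$ only as a combination of one unitary and one partial isometry, which is precisely why the result does not extend verbatim to arbitrary frames.
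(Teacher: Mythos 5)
The paper does not prove this statement; it is quoted verbatim from the cited reference \cite{CASAZZATHREE} as motivation for Conjecture \ref{13}, so there is no in-paper proof to compare against. Your argument is correct and is essentially the standard one (and the one underlying the cited source): reduce to writing the invertible operator $T$ as a multiple of an average of two unitaries via the polar decomposition $S = W\lvert S\rvert$ and the identity $\lvert S\rvert = \cos A = \tfrac{1}{2}\bigl(e^{iA}+e^{-iA}\bigr)$, with invertibility guaranteeing that $W$ is unitary. Your closing remark correctly isolates why the argument breaks for general frames and why three orthonormal bases are needed there; the only cosmetic issue is that your labels $e_n$, $f_n$ collide with notation the paper reserves for the standard basis of $\ell^p(\mathbb{N})$ and for functionals.
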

 \begin{theorem}\cite{CASAZZATHREE}
 	\textbf{Every frame for a Hilbert space is a multiple of a sum of three orthonormal basis.}
 \end{theorem}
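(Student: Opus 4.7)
The plan is to reduce the statement to an operator-theoretic fact about writing contractions as averages of unitaries, via the synthesis operator of the frame.

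First, I would fix a separable Hilbert space $\mathcal{K} \supseteq \mathcal{H}$ isomorphic to $\ell^2(\mathbb{N})$ (taking $\mathcal{K} = \mathcal{H}$ when $\mathcal{H}$ is infinite-dimensional, and embedding $\mathcal{H}$ as a subspace of an $N$-dimensional space in the finite-dimensional case where the frame has $N$ vectors). Fix an orthonormal basis $\{\omega_n\}_n$ of $\mathcal{K}$ so that $\mathcal{K} \cong \ell^2(\mathbb{N})$ and $\{\omega_n\}_n$ corresponds to the standard basis $\{e_n\}_n$. The synthesis operator $\theta_\tau : \ell^2(\mathbb{N}) \to \mathcal{H} \subseteq \mathcal{K}$, defined by $e_n \mapsto \tau_n$, is bounded with $\|\theta_\tau\| \leq \sqrt{B}$, where $B$ is the upper frame bound of $\{\tau_n\}_n$. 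Composing with the inclusion $\mathcal{H} \hookrightarrow \mathcal{K}$ lets me view $\theta_\tau$ as an element of the unital $C^*$-algebra $B(\mathcal{K})$.

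Next, scale: choose a positive constant $c$ with $c\|\theta_\tau\| \leq 1/3$, for instance $c = 1/(3\sqrt{B})$. The key input is a refinement of the Russo-Dye theorem due to Kadison and Pedersen, which asserts that for $n \geq 3$, every element $A$ of a unital $C^*$-algebra with $\|A\| \leq 1 - 2/n$ can be written as $A = (U_1 + \cdots + U_n)/n$ with each $U_i$ unitary. Applying this with $n = 3$ to $A = c\theta_\tau$ (which satisfies $\|A\| \leq 1/3 = 1 - 2/3$) produces unitaries $U_1, U_2, U_3 \in B(\mathcal{K})$ such that
\begin{align*}
c\,\theta_\tau = \frac{U_1 + U_2 + U_3}{3}.
\end{align*}

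Finally, evaluate at the basis vectors: for each $n$,
\begin{align*}
\tau_n = \theta_\tau e_n = \frac{1}{3c}\bigl(U_1 e_n + U_2 e_n + U_3 e_n\bigr) = \sqrt{B}\,\bigl(\alpha_n + \beta_n + \gamma_n\bigr),
\end{align*}
where $\alpha_n := U_1 e_n$, $\beta_n := U_2 e_n$, $\gamma_n := U_3 e_n$. Since each $U_i$ is unitary, $\{\alpha_n\}_n$, $\{\beta_n\}_n$, $\{\gamma_n\}_n$ are each orthonormal bases for $\mathcal{K}$, proving that $\{\tau_n\}_n$ is a multiple of a sum of three orthonormal bases.

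The main obstacle I anticipate is a subtle one of framework rather than a serious technical difficulty: matching the statement's notion of "orthonormal basis" when $\{\tau_n\}_n$ has more vectors than $\dim\mathcal{H}$. The construction above gives orthonormal bases of the ambient space $\mathcal{K}$ rather than of $\mathcal{H}$ itself, and the proof must justify that this is what the theorem intends. A secondary technical point is quoting the correct form of the Kadison-Pedersen averaging result (and verifying the sharp constant $1 - 2/n$); any looser bound would only change the scalar multiple and would not affect the count of three orthonormal bases.
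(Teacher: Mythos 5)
The paper does not prove this statement: it is quoted from Casazza \cite{CASAZZATHREE} purely as motivation for Conjecture \ref{14}, so there is no in-paper proof to compare against. Your argument is correct and is essentially the known one: pass to the synthesis operator, rescale so its norm is at most $1/3$, and apply the Kadison--Pedersen refinement of the Russo--Dye theorem to write it as a mean of three unitaries, whose images of a fixed orthonormal basis give the three orthonormal bases. The caveat you flag is the right one: when the frame has more vectors than $\dim \mathcal{H}$, the three orthonormal bases necessarily live in a larger ambient space, which is consistent with the setting of the cited source (separable infinite-dimensional $\mathcal{H}$ with frames indexed by $\mathbb{N}$).
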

\begin{theorem}\cite{CASAZZATHREE}
	\textbf{Every frame for a Hilbert space is a multiple of a sum of an  orthonormal basis and a Riesz basis.}
\end{theorem}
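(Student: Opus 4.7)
The plan is to construct, for the given frame $\{\tau_n\}_n$ of $\mathcal{H}$, a positive scalar $c$, an orthonormal basis $\{e_n\}_n$ of $\mathcal{H}$, and a Riesz basis $\{r_n\}_n$ of $\mathcal{H}$ so that $\tau_n = c(e_n + r_n)$ for every $n$. The idea is to realize the synthesis of $\{\tau_n\}_n$ on a reference orthonormal basis as the scalar $cI$ plus a bounded invertible perturbation, which is achievable provided $c$ is chosen larger than the norm of the synthesis operator.

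First I would fix any orthonormal basis $\{e_n\}_n$ of $\mathcal{H}$; since a frame spans a separable subspace, such a basis is available of the same cardinality as $\{\tau_n\}_n$. A frame is automatically a Bessel sequence, so the standard synthesis operator $\ell^2(\mathbb{N}) \to \mathcal{H}$, $\{a_n\}_n \mapsto \sum_n a_n \tau_n$, is bounded with norm at most $\sqrt{B}$, where $B$ is an upper frame bound. Composing with the coordinate isomorphism $\mathcal{H} \to \ell^2(\mathbb{N})$ induced by $\{e_n\}_n$ yields a bounded linear operator $A : \mathcal{H} \to \mathcal{H}$ characterized by $A e_n = \tau_n$ and satisfying $\|A\| \leq \sqrt{B}$.

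Next I would choose any $c > \|A\|$ and set $T := \tfrac{1}{c}A - I = -(I - \tfrac{1}{c}A)$. Since $\|\tfrac{1}{c}A\| < 1$, the Neumann series inverts $I - \tfrac{1}{c}A$, so $T$ is bounded and invertible. Defining $r_n := T e_n$, Definition \ref{RIESZ} forces $\{r_n\}_n$ to be a Riesz basis for $\mathcal{H}$, and a direct computation gives
\begin{align*}
c(e_n + r_n) = c e_n + c T e_n = c e_n + (A e_n - c e_n) = A e_n = \tau_n, \quad \forall n \in \mathbb{N},
\end{align*}
which is the desired representation.

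The whole argument rests on one honest step — extracting the bounded operator $A$ from a sequence merely known to be a frame — and this is exactly the Bessel half of the frame inequality. The remaining work is algebraic: scaling so that $A/c$ becomes a strict contraction and peeling off the identity to expose the Riesz-basis summand. The main conceptual point, rather than a genuine obstacle, is the realization that the orthonormal basis and the Riesz basis may be drawn from a \emph{single} underlying ONB, with the scalar $c$ absorbing $\|A\|$.
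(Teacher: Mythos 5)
The paper does not prove this statement; it is quoted verbatim from Casazza's work as background, so there is no internal proof to compare against. Your argument is correct and is essentially Casazza's original one: realize the synthesis operator as a bounded $A$ with $Ae_n=\tau_n$, pick $c>\|A\|$ so that $\tfrac{1}{c}A-I$ is invertible by the Neumann series, and read off $\tau_n=c\bigl(e_n+(\tfrac{1}{c}A-I)e_n\bigr)$. The only caveat worth recording is that the construction (and indeed the statement itself) tacitly requires $\mathcal{H}$ to be infinite-dimensional, so that an orthonormal basis can be indexed by the same set $\mathbb{N}$ as the frame; your claim that such a basis exists ``of the same cardinality as $\{\tau_n\}_n$'' fails for an infinite frame in a finite-dimensional space.
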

 \begin{conjecture}\label{13}
	\textbf{Every p-approximate Riesz basis  is a linear combination of  p-orthonormal bases. Moreover, there exists $M \in \mathbb{N}$ such that every  p-approximate Riesz basis  can be written as a linear combination of   $M$ p-orthonormal bases.}
\end{conjecture}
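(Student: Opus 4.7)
The plan is to reduce Conjecture \ref{13} to a statement about the isometry group of $\mathcal{X}$, using the operator characterizations established above. By Definition \ref{PRIESZDEFINITION} together with Theorem \ref{PREVIUOS}, any p-approximate Riesz basis has the form $(\{g_n U\}_n, \{V\omega_n\}_n)$ for some bounded invertible $U,V:\mathcal{X}\to\mathcal{X}$, while every p-orthonormal basis has the form $(\{g_n W^{-1}\}_n, \{W\omega_n\}_n)$ for an invertible isometry $W$ of $\mathcal{X}$ (once a reference p-orthonormal basis $(\{g_n\}_n,\{\omega_n\}_n)$ is fixed). Interpreting ``linear combination of p-orthonormal bases'' componentwise, the desired identity
\begin{align*}
V\omega_n = \sum_{i=1}^{M} c_i W_i \omega_n, \quad g_n U = \sum_{i=1}^{M} c_i g_n W_i^{-1}, \quad \forall n \in \mathbb{N},
\end{align*}
is equivalent, since $\{\omega_n\}_n$ is a Schauder basis and the $\{g_n\}_n$ separate points, to the operator identities $V = \sum_{i=1}^{M} c_i W_i$ and $U = \sum_{i=1}^{M} c_i W_i^{-1}$. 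Thus Conjecture \ref{13} is equivalent to the assertion that every bounded invertible operator on $\mathcal{X}$ can be written as a linear combination of boundedly many invertible isometries, with $M$ independent of the chosen operator.

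The first step I would carry out is to settle the case $p=2$, which serves both as a template and as corroborating evidence for the conjecture. On a Hilbert space the invertible isometries are precisely the unitaries, and by the Russo--Dye theorem every operator of norm at most one lies in the norm-closed convex hull of the unitary group; Kadison--Pedersen's refinement then shows that every operator of norm strictly less than one is the arithmetic mean of three unitaries. Rescaling yields that every bounded invertible operator on a Hilbert space is a linear combination of three unitaries, and pulling this decomposition back through Theorem \ref{PREVIUOS} recovers Conjecture \ref{13} in the Hilbert case with the universal constant $M=3$.

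The main obstacle for general $p$ is the extreme rigidity of the isometry group of $\ell^p$ when $p\neq 2$. By the Lamperti--Banach classification, every invertible isometry of $\ell^p$ is a unimodular weighted permutation, so a finite linear combination of $M$ such isometries has at most $M$ nonzero entries in every row and column of its matrix. Consequently no uniform $M$ can represent arbitrary invertible operators on $\ell^p$ in this way, and I expect Conjecture \ref{13} as stated to fail for $p\neq 2$. I would therefore conclude the investigation by isolating the natural corrected statement---restricting to those p-approximate Riesz bases whose implementing operators lie in the subalgebra generated by invertible isometries, or relaxing ``linear combination'' to a norm-closure where the coefficients are allowed to depend on the index $n$---and I would expect the bulk of the proof effort for $p\neq 2$ to consist in pinning down this sharper form against the Lamperti obstruction.
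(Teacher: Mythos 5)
The paper gives no proof of this statement: Conjecture \ref{13} is posed as an open problem (modelled on Casazza's theorem, quoted in the paper, that every Riesz basis for a Hilbert space is a linear combination of two orthonormal bases), so there is no argument of the author's to compare yours against; what follows assesses your proposal on its own terms.

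Your reduction to the isometry group is the right first move, but the claimed equivalence is too quick. A p-orthonormal basis is a \emph{pair} $(\{g_nW^{-1}\}_n,\{W\omega_n\}_n)$ in which the functional part and the vector part are tied to the \emph{same} isometry $W$, so under your componentwise reading the conjecture demands a simultaneous decomposition $V=\sum_{i=1}^{M}c_iW_i$ \emph{and} $U=\sum_{i=1}^{M}c_iW_i^{-1}$ with one set of isometries and one set of coefficients. That is strictly stronger than ``every bounded invertible operator is a linear combination of $M$ invertible isometries,'' and your Hilbert-space argument (Russo--Dye and Kadison--Pedersen applied to $V$ alone) does not produce the matching decomposition of $U$; note also that Casazza's theorem concerns a single sequence, not a pair, and already achieves $M=2$ there, so your $M=3$ is both suboptimal and not yet a proof of the pair version even for $p=2$. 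A further caveat is that the paper never defines what a linear combination of pairs means, so any resolution must first fix that convention.

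Your Lamperti observation is the most valuable part of the proposal, and you should recognize that it points toward refutation rather than proof. Since a space admitting a p-orthonormal basis is isometrically isomorphic to $\ell^p(\mathbb{N})$, for $p\neq 2$ its surjective isometries are unimodular weighted permutations, and a linear combination of $M$ of them has at most $M$ nonzero matrix entries in each row; hence any p-approximate Riesz basis whose operator $V$ has a row with infinitely many nonzero entries (an invertible rank-one perturbation of the identity on $\ell^p(\mathbb{N})$ suffices) is a counterexample under the componentwise interpretation. Making that example explicit would settle the conjecture negatively for $p\neq 2$ --- a legitimate outcome, but the opposite of what the statement asserts, so the proposal as written cannot stand as a proof of Conjecture \ref{13}.
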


  \begin{conjecture}\label{14}
  	\textbf{Every p-ASF is a multiple of a  finite sum of p-orthonormal bases. Moreover, there exists $M \in \mathbb{N}$ such that every  p-ASF can be written as a multiple of   sum of $M$ p-orthonormal bases.}
  \end{conjecture}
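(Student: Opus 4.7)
The plan is to reduce Conjecture \ref{14} to an operator-decomposition statement on $\ell^p(\mathbb{N})$. Since every Banach space admitting a p-orthonormal basis is (by one of the theorems proven above) isometrically isomorphic to $\ell^p(\mathbb{N})$, the conjecture implicitly forces us to work under this identification. Theorem \ref{PREVIUOS} then identifies the p-orthonormal bases of $\mathcal{X}$ with invertible isometries: fixing a reference p-orthonormal basis $(\{f_n\}_n, \{\tau_n\}_n)$, every other p-orthonormal basis has the form $(\{f_n V^{-1}\}_n, \{V\tau_n\}_n)$ with $V \colon \mathcal{X}\to \mathcal{X}$ an invertible isometry. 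Passing through the canonical isometry to $\ell^p(\mathbb{N})$, the conjecture becomes the following: the synthesis operator $\theta_\tau \colon \ell^p(\mathbb{N})\to\mathcal{X}$ of the given p-ASF and the companion analysis operator $\theta_f \colon \mathcal{X}\to \ell^p(\mathbb{N})$ should admit matched decompositions
\[
\theta_\tau \;=\; \lambda \sum_{k=1}^{M} V_k, \qquad \theta_f \;=\; \lambda \sum_{k=1}^{M} V_k^{-1},
\]
for some common scalar $\lambda>0$ and a common number $M$ of invertible isometries $V_k\colon\ell^p(\mathbb{N})\to\mathcal{X}$.

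The first step in the argument would be a normalization. Using the invertible frame operator, one passes from the general p-ASF to the canonical Parseval p-ASF $(\{f_n\}_n,\{S_{f,\tau}^{-1}\tau_n\}_n)$ for which $\theta_\tau\theta_f=I_{\mathcal{X}}$; after a further uniform rescaling both operators become contractions. The second step, which is the heart of the argument, would be a Russo--Dye-type decomposition of these two contractions as a common scalar multiple of a sum of $M$ invertible isometries, with the two decompositions matched coordinate-wise so that the $k$-th summand on the $\theta_f$ side is the inverse of the $k$-th summand on the $\theta_\tau$ side. The final step is bookkeeping: by Theorem \ref{PREVIUOS}, each extracted pair $(\{\zeta_n V_k^{-1}\}_n, \{V_k e_n\}_n)$ is automatically a p-orthonormal basis, and reading off the scalars $\lambda$ shows that $(\{f_n\}_n,\{\tau_n\}_n)$ equals a scalar multiple of the sum, as required.

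The principal obstacle is the second step. For $p=2$, the isometries of $\ell^2(\mathbb{N})$ are the unitaries, and the Russo--Dye theorem guarantees that every contraction is a convex combination of unitaries; this is precisely the engine that makes Casazza's sum-of-three-orthonormal-bases theorem work in the Hilbert space setting. For $p\in[1,\infty)\setminus\{2\}$, however, the Banach--Lamperti theorem dictates that the surjective linear isometries of $\ell^p(\mathbb{N})$ are signed permutations, a rigid combinatorial class: a sum of $M$ such isometries has at most $M$ nonzero entries per row and column and entries in $\{-M,\ldots,M\}$. Consequently, a uniform decomposition with $M$ fixed independently of the p-ASF cannot possibly produce arbitrary bounded operators, so the conjecture will require either allowing $M$ to depend on the analysis/synthesis norms, relaxing ``p-orthonormal basis'' to ``p-orthonormal sequence'' to gain flexibility, or devising a Birkhoff--von Neumann-style decomposition of bounded operators on $\ell^p$ into signed permutations. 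Resolving this rigidity is the true mathematical content of the conjecture for $p\ne 2$.
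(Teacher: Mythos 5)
The statement you have been asked to prove is posed in the paper as an \emph{open conjecture}, modelled on Casazza's theorem that every Hilbert-space frame is a multiple of a sum of three orthonormal bases; the paper supplies no proof, so there is nothing to compare your argument against on that side. Judged on its own terms, your proposal is not a proof: everything is concentrated in your ``second step'' (a matched Russo--Dye-type decomposition $\theta_\tau=\lambda\sum_{k=1}^{M}V_k$, $\theta_f=\lambda\sum_{k=1}^{M}V_k^{-1}$ into invertible isometries), and that step is never carried out. Indeed, your own invocation of the Banach--Lamperti theorem shows it \emph{cannot} be carried out along the proposed lines for $p\neq 2$: the surjective isometries of $\ell^p(\mathbb{N})$ are signed (weighted) permutations, so a sum of $M$ of them has at most $M$ nonzero entries in each column, forcing every $\tau_n$ in the resulting p-ASF to be supported on at most $M$ coordinates of the reference basis. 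A generic p-ASF violates this, so what you have actually written is closer to an argument that the conjecture is false as stated for $p\neq 2$ (or must be reformulated, e.g.\ with $M$ depending on the frame, or with ``p-orthonormal basis'' weakened) than a proof of it. That is a legitimate and useful observation about the conjecture, but it is the opposite of establishing it.

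Two further gaps deserve explicit mention. First, even in the case $p=2$ your reduction is incomplete: Russo--Dye decomposes a \emph{single} contraction into a convex combination of unitaries, whereas your reformulation demands two simultaneous decompositions of the distinct operators $\theta_\tau$ and $\theta_f$ whose $k$-th summands are mutually inverse. That matching condition is an additional constraint that Russo--Dye does not provide, and Casazza's Hilbert-space argument (which works with a single sequence, not a pair) does not automatically supply it either. Second, the conjecture quantifies over all Banach spaces admitting a p-ASF, but by the paper's own theorems a space admits a p-orthonormal basis only if it is isometrically isomorphic to $\ell^p(\mathbb{N})$; for any other space carrying a p-ASF the right-hand side of the claimed identity is empty. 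You note this in passing (``the conjecture implicitly forces us to work under this identification''), but a complete treatment would have to either add that isometric hypothesis to the statement or explain how the conjecture is to be read for spaces with no p-orthonormal basis at all.
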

 \begin{conjecture}\label{141}
	\textbf{Every p-ASF is a multiple of a   sum of p-orthonormal basis and a p-approximate Riesz basis. }
\end{conjecture}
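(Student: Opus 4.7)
\textbf{Proof proposal for Conjecture \ref{141}.} The plan is to combine the characterization of p-approximate Riesz bases in Theorem \ref{RIESZCHAR} with an idempotent-rigidity argument in $\ell^p(\mathbb{N})$. Since (by the theorems preceding Definition \ref{PRIESZDEFINITION}) $\mathcal{X}$ carries a p-orthonormal basis only when it is isometrically isomorphic to $\ell^p(\mathbb{N})$, the natural setting of the conjecture is such an $\mathcal{X}$; I would fix from the start any p-orthonormal basis $(\{g_n\}_n, \{\omega_n\}_n)$ for $\mathcal{X}$. Given a p-ASF $(\{f_n\}_n, \{\tau_n\}_n)$ and a scaling parameter $\lambda>0$ to be chosen large, set
\begin{align*}
h_n \coloneqq \lambda^{-1}f_n - g_n, \quad \nu_n \coloneqq \lambda^{-1}\tau_n - \omega_n, \quad \forall n \in \mathbb{N},
\end{align*}
so that $f_n=\lambda(g_n+h_n)$ and $\tau_n=\lambda(\omega_n+\nu_n)$. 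The entire task reduces to showing that $(\{h_n\}_n,\{\nu_n\}_n)$ is a p-approximate Riesz basis for $\lambda$ sufficiently large.

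First I would verify that $(\{h_n\}_n,\{\nu_n\}_n)$ is a p-ASF. Since $\theta_h=\lambda^{-1}\theta_f-\theta_g$ and $\theta_\nu=\lambda^{-1}\theta_\tau-\theta_\omega$ are bounded, and since $\theta_\omega\theta_g=I_\mathcal{X}$ (from the p-orthonormal basis axioms), a direct computation yields
\begin{align*}
S_{h,\nu} = I_\mathcal{X} - \lambda^{-1}(S_{g,\tau}+S_{f,\omega}) + \lambda^{-2}S_{f,\tau}.
\end{align*}
For $\lambda$ large this lies within operator-norm distance less than $1$ of $I_\mathcal{X}$, so a Neumann series makes $S_{h,\nu}$ invertible, confirming the p-ASF condition.

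By Theorem \ref{RIESZCHAR}, it now suffices to establish $P\coloneqq\theta_h S_{h,\nu}^{-1}\theta_\nu = I_{\ell^p(\mathbb{N})}$. The rigidity input is that for any p-ASF the operator $P$ is automatically idempotent, since $\theta_\nu\theta_h=S_{h,\nu}$ gives $P^2=\theta_h S_{h,\nu}^{-1}(\theta_\nu\theta_h)S_{h,\nu}^{-1}\theta_\nu = P$. Expanding
\begin{align*}
P - I_{\ell^p(\mathbb{N})} = (\lambda^{-1}\theta_f-\theta_g)\,S_{h,\nu}^{-1}\,(\lambda^{-1}\theta_\tau-\theta_\omega) - \theta_g\theta_\omega
\end{align*}
and substituting $S_{h,\nu}^{-1}=I_\mathcal{X}+O(\lambda^{-1})$ from the Neumann expansion shows $\|P-I_{\ell^p(\mathbb{N})}\| = O(\lambda^{-1})$ as $\lambda\to\infty$. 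Setting $Q=P-I_{\ell^p(\mathbb{N})}$, idempotence of $P$ forces $Q(Q+I_{\ell^p(\mathbb{N})})=0$; once $\|Q\|<1$ the factor $Q+I_{\ell^p(\mathbb{N})}$ is invertible by Neumann, so $Q=0$. Thus the Riesz identity holds \emph{exactly}, not merely approximately, and $(\{h_n\}_n,\{\nu_n\}_n)$ is a p-approximate Riesz basis.

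I expect the main obstacle to be structural rather than technical: the construction is silent for Banach spaces that are not isometric to $\ell^p(\mathbb{N})$, because no p-orthonormal basis is then available to serve as the first summand. A secondary but more delicate issue is that the scalar $\lambda$ and the auxiliary basis $(\{g_n\}_n,\{\omega_n\}_n)$ are highly non-canonical; in the Hilbert case Casazza's argument uses the square root $S^{1/2}$ of the frame operator to make the decomposition genuinely geometric, and finding a substitute in the Banach setting, where fractional powers of $S_{f,\tau}$ need not exist, looks like the principal barrier to upgrading this perturbative existence statement into a canonical one.
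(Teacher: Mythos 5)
There is nothing in the paper to compare your argument against: the statement is posed as an open conjecture (modeled on Casazza's Hilbert-space theorem that every frame is a multiple of a sum of an orthonormal basis and a Riesz basis), and the author supplies no proof. So your proposal has to be judged on its own merits, and as far as I can check it is sound. The computation $S_{h,\nu}=I_\mathcal{X}-\lambda^{-1}(S_{g,\tau}+S_{f,\omega})+\lambda^{-2}S_{f,\tau}$ is correct because $\theta_\omega\theta_g=I_\mathcal{X}$, the Neumann argument gives invertibility for large $\lambda$, the operator $P=\theta_hS_{h,\nu}^{-1}\theta_\nu$ is indeed idempotent for any p-ASF since $\theta_\nu\theta_h=S_{h,\nu}$, and the estimate $\|P-I_{\ell^p(\mathbb{N})}\|=O(\lambda^{-1})$ follows from $\theta_g\theta_\omega=I_{\ell^p(\mathbb{N})}$ (Equation (\ref{RIESZEPP}) in the paper). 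The rigidity step --- an idempotent within distance $1$ of the identity must equal the identity --- is correct, and Theorem \ref{RIESZCHAR} then delivers the conclusion. This is exactly the Banach-space transplant of the standard perturbation proof of Casazza's theorem, with the idempotent trick replacing the Hilbert-space fact that a small perturbation of an orthonormal basis is a Riesz basis.

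Two caveats you should make explicit if you write this up. First, the scope issue you already flag is not merely an obstacle but determines the truth value of the conjecture as literally stated: a p-approximate Riesz basis and a p-orthonormal basis both exist only when $\mathcal{X}$ is isometrically isomorphic to $\ell^p(\mathbb{N})$, whereas p-ASFs exist on spaces that are only isomorphic to complemented subspaces of $\ell^p(\mathbb{N})$; on such spaces the conjecture is false for trivial reasons, so your result is really that the conjecture holds precisely on the class of spaces admitting a p-orthonormal basis. Second, you are tacitly fixing one reading of ``multiple of a sum'' for pairs, namely $f_n=\lambda(g_n+h_n)$ and $\tau_n=\lambda(\omega_n+\nu_n)$ with the same scalar on both coordinates; this is the natural reading and consistent with the Hilbert-space antecedent, but since the paper never defines the arithmetic of pairs you should state it. Neither caveat is a gap in the mathematics; the argument itself holds up.
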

We next recall the famous fundamental inequality for  finite frames for finite dimensional Hilbert spaces due to Casazza, Fickus, Kovacevic, Leon, and Tremain \cite{CSAZZAFUNDAMENTAL} and independently by Viswanath and Anantharam \cite{VISWANATHANATHARAM}.
 \begin{theorem}\cite{CSAZZAFUNDAMENTAL, VISWANATHANATHARAM}
(\textbf{Fundamental inequality})	\textbf{Let $n\geq d$ and $\mathcal{H}$ be a $d$-dimensional Hilbert   space and let $a_1, \dots, a_n$ be a collection of non negative real numbers. Then there exists a tight frame  $\{\tau_j\}_{j=1}^{n}$  for $\mathcal{H}$ with $\|\tau_j\|=a_j$, $\forall 1\leq j\leq n$ if and only if 
	\begin{align*}
		\max_{1\leq j \leq n}a_j^2\leq \frac{1}{d}\sum_{j=1}^{n}a_j^2.
	\end{align*}}
\end{theorem}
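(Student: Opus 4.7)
The plan is to translate the existence of a tight frame with prescribed norms $a_1,\dots,a_n$ into the existence of a Hermitian matrix with prescribed diagonal and prescribed spectrum, and then invoke the Schur--Horn theorem. A tight frame $\{\tau_j\}_{j=1}^{n}$ for $\mathcal{H}$ with frame constant $A$ is characterized by the operator identity $\sum_{j=1}^{n}\tau_j\tau_j^{*}=AI_{\mathcal{H}}$; taking traces forces $Ad=\sum_j a_j^2$, so the candidate frame constant $A=\frac{1}{d}\sum_j a_j^2$ is determined by the prescribed norms.

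Necessity is then immediate from the Loewner inequality $\tau_k\tau_k^{*}\preceq \sum_j\tau_j\tau_j^{*}=AI_{\mathcal{H}}$: passing to operator norms gives $a_k^2=\|\tau_k\|^2\leq A$, which is exactly the claimed inequality.

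For sufficiency, set $A=\frac{1}{d}\sum_j a_j^2$ and look for an $n\times n$ positive semi-definite matrix $G$ with diagonal $(a_1^2,\dots,a_n^2)$ and spectrum equal to $A$ with multiplicity $d$ and $0$ with multiplicity $n-d$. Any such $G$ factors as $G=T^{*}T$ with $T$ a $d\times n$ matrix; the columns $\tau_j$ of $T$ then satisfy $\|\tau_j\|^2=G_{jj}=a_j^2$, while $TT^{*}$ shares the nonzero spectrum of $T^{*}T=G$, forcing $TT^{*}=AI_{\mathcal{H}}$, so $\{\tau_j\}_{j=1}^{n}$ is the desired tight frame. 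The Schur--Horn theorem produces such a $G$ precisely under the majorization condition $\sum_{j=1}^{k} a_{[j]}^2\leq \sum_{j=1}^{k}\lambda_{[j]}$ for $k=1,\dots,n-1$ with equality at $k=n$, where brackets denote nonincreasing rearrangement. For $k\leq d$ the right-hand side is $kA$, and the hypothesis $a_{[1]}^2\leq A$ gives $\sum_{j=1}^{k} a_{[j]}^2\leq k\,a_{[1]}^2\leq kA$; for $k>d$ the right-hand side is $dA=\sum_j a_j^2$, so the inequality is automatic; the case $k=n$ is the defining trace equality.

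The main obstacle is the Schur--Horn step itself, which is a nontrivial classical convexity theorem; once it is taken as a black box, the reduction from frames to Gram matrices and the verification of the majorization conditions are routine. A self-contained alternative is the inductive ``eigenstep'' construction of Casazza, Fickus, Kovacevic, Leon, and Tremain, which builds $\tau_j$ one vector at a time while updating the spectrum of the partial frame operator, using the fundamental inequality to guarantee feasibility of the update at each stage and terminating with $\sum_j\tau_j\tau_j^{*}=AI_{\mathcal{H}}$.
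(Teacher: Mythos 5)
The paper does not contain a proof of this statement: it is reproduced verbatim as a cited theorem of Casazza, Fickus, Kovacevic, Leon, and Tremain and of Viswanath and Anantharam, so there is no in-paper argument to compare yours against. On its own merits your proof is correct. The trace identity correctly pins down the only possible frame constant $A=\frac{1}{d}\sum_j a_j^2$; the necessity direction via $\tau_k\tau_k^{*}\preceq \sum_j\tau_j\tau_j^{*}=AI$ is exactly right; and the verification of the majorization hypotheses of Schur--Horn is complete in all three ranges of $k$ (for $k\leq d$ using $a_{[1]}^2\leq A$, for $d<k<n$ using nonnegativity, and the trace equality at $k=n$). The passage from the Gram matrix $G$ with spectrum $(A,\dots,A,0,\dots,0)$ to a $d\times n$ synthesis matrix $T$ with $T^{*}T=G$ and $TT^{*}=AI_d$ is also sound, since a $d\times d$ Hermitian matrix whose spectrum is $A$ with multiplicity $d$ must be $AI_d$. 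Two small remarks: the degenerate case $a_1=\cdots=a_n=0$ satisfies the inequality but produces no frame because the lower frame bound fails --- this is a defect of the theorem as stated rather than of your argument; and the cited sources argue differently (an explicit inductive construction in one, a majorization analysis arising from CDMA sequence design in the other), so your Schur--Horn reduction is a legitimate alternative that trades an explicit construction for a classical black box, exactly as you acknowledge in your closing paragraph.
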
 
Here is fundamental inequality conjecture for Banach spaces.
\begin{conjecture}\label{FUNDAMENTALINEQUALITY}
	(\textbf{Fundamental inequality conjecture for Banach spaces}) \textbf{Let $n\geq d$ and $\mathcal{X}$ be a $d$-dimensional Banach  space and let $a_1, \dots, a_n, b_1, \dots, b_n, c_1, \dots, c_n$ be a collection of non negative real numbers. Then there exists a tight ASF $(\{f_j\}_{j=1}^n, \{\tau_j\}_{j=1}^n)$ for $\mathcal{X}$ with 
	\begin{align*}
		\|f_j\|=a_j,  \quad \|\tau_j\|=b_j,\quad |f_j(\tau_j)|=c_j, \quad \forall 1\leq j \leq n
	\end{align*}
if and only if 
	\begin{align*}
	&\max_{1\leq j \leq n}a_j^p\leq \frac{1}{d}\sum_{j=1}^{n}a_j^p,\\
	& \max_{1\leq j \leq n}b_j^q\leq \frac{1}{d}\sum_{j=1}^{n}b_j^q, \\
	& \max_{1\leq j \leq n}c_j^r\leq \frac{1}{d}\sum_{j=1}^{n}c_j^r
\end{align*}
for some real $p,q,r>0$}.
\end{conjecture}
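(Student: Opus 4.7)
The plan is to separate the two implications, which have quite different flavors. For the \emph{only if} direction, I would exploit that $\mathcal{X}$ is finite-dimensional so that the frame operator $S_{f,\tau}:\mathcal{X}\to\mathcal{X}$ has a well-defined trace, with the key identity $\operatorname{tr}(S_{f,\tau})=\sum_{j=1}^{n}f_j(\tau_j)=\lambda d$ for a tight p-ASF with tightness constant $\lambda$. Combined with $|f_j(\tau_j)|=c_j$ this already yields $\sum_{j=1}^{n}c_j\geq\lambda d$, and together with $c_j\leq a_jb_j$ it propagates information between the three sequences. The pointwise bound $c_j\leq\lambda$ I would try to recover by applying $S_{f,\tau}$ to $\tau_k$ and pairing the identity $\sum_{j=1}^{n}f_j(\tau_k)\tau_j=\lambda\tau_k$ with a norming functional at $\tau_k$, which isolates $|f_k(\tau_k)|\|\tau_k\|$ on the left against $\lambda\|\tau_k\|$ on the right; a dual argument on $\mathcal{X}^*$ should then give analogous bounds for $a_j$ and $b_j$. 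The three fundamental inequalities would then follow from combining these pointwise bounds with the trace identity, for suitably chosen exponents $p,q,r$.

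For the \emph{if} direction, I would proceed in stages. First, rescale to the Parseval case $\lambda=1$. Second, treat the model case $\mathcal{X}=\ell^p_d$, where the standard p-orthonormal basis $(\{\zeta_j\}_{j=1}^{d},\{e_j\}_{j=1}^{d})$ is explicitly available, and attempt an iterative ``spectral tetris'' style construction, building $\tau_j$ and $f_j$ one coordinate at a time with prescribed norms $a_j,b_j$ and prescribed diagonal values $c_j$, while maintaining the running sum $\sum_{j}f_j(\cdot)\tau_j$ equal to the identity at termination. Third, for a general $d$-dimensional $\mathcal{X}$ I would try to identify the permissible exponents $p,q,r$ with Banach-geometric invariants of $\mathcal{X}$ (such as type and cotype indices), and transport the $\ell^p_d$ construction through an isomorphism whenever one is available; otherwise a perturbation argument relative to a suitable p-approximate Riesz basis provided by Theorem \ref{RIESZCHAR} may be needed.

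The main obstacle, and the step I expect to be genuinely hard, is the simultaneous control of the three sequences $(a_j),(b_j),(c_j)$ in the Banach setting. In a Hilbert space, Riesz representation reduces the problem to a single sequence and the spectral theorem allows one to rotate and redistribute norms freely, which is how both the Casazza--Fickus--Kovacevic--Leon--Tremain proof and the Viswanath--Anantharam proof proceed. In a general Banach space, $\tau_j$ and $f_j$ must be chosen independently, and the constraint $|f_j(\tau_j)|=c_j$ couples them nonlinearly, since perturbing $\tau_j$ changes which functionals can dualize it and vice versa. Moreover, the conjecture leaves the exponents $p,q,r$ free, which strongly suggests that the correct formulation must tune these to the geometry of $\mathcal{X}$; clarifying precisely which triples $(p,q,r)$ are admissible is itself a nontrivial preliminary problem that I would expect to settle before either direction of the equivalence can be made fully rigorous.
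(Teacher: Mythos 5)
You are attempting to prove a statement that the paper itself poses as an \emph{open conjecture}: there is no proof of Conjecture \ref{FUNDAMENTALINEQUALITY} in the paper to compare against, only the Hilbert-space theorem of Casazza--Fickus--Kovacevic--Leon--Tremain and Viswanath--Anantharam that motivates it. What you have written is a research programme rather than a proof, and you acknowledge as much; but one step that you present as essentially settled is in fact broken. In the \emph{only if} direction you claim that applying $S_{f,\tau}$ to $\tau_k$, i.e.\ $\sum_{j=1}^{n}f_j(\tau_k)\tau_j=\lambda\tau_k$, and pairing with a norming functional $\varphi_k$ for $\tau_k$ ``isolates'' $|f_k(\tau_k)|\,\|\tau_k\|$ against $\lambda\|\tau_k\|$. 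It does not: the pairing gives $\sum_{j=1}^{n}f_j(\tau_k)\varphi_k(\tau_j)=\lambda\|\tau_k\|$, and the off-diagonal terms $f_j(\tau_k)\varphi_k(\tau_j)$, $j\neq k$, carry no sign or positivity whatsoever in a general Banach space. The Hilbert-space analogue works precisely because there the corresponding terms are $|\langle\tau_k,\tau_j\rangle|^2\geq 0$, so the diagonal term is dominated by the full sum. Without a substitute for that positivity (some analogue of the spectrum condition the author imposes in the ``strong form'' Weaver conjectures, say) the pointwise bound $c_k\leq\lambda$, and hence the whole chain leading to the three displayed inequalities, is unsupported. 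The trace identity $\operatorname{tr}(S_{f,\tau})=\sum_{j}f_j(\tau_j)=\lambda d$ is fine, but it only yields $\sum_j c_j\geq|\lambda|d$, which by itself does not produce any of the three max-versus-mean inequalities.

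The \emph{if} direction as you describe it is entirely programmatic, and the transport step has the same defect you flag elsewhere: a linear isomorphism $\ell^p_d\to\mathcal{X}$ that is not isometric distorts $\|\tau_j\|$, $\|f_j\|$ and hence destroys the prescribed values $a_j,b_j$ (though it preserves $f_j(\tau_j)$), so a spectral-tetris construction in $\ell^p_d$ does not transfer to general $\mathcal{X}$. Finally, before attempting either direction you should pin down what the right-hand side of the conjecture actually asserts: because $p,q,r>0$ are existentially quantified, the condition $\max_j a_j^p\leq\frac{1}{d}\sum_j a_j^p$ for \emph{some} $p$ is very weak (it essentially counts how many $a_j$ are nonzero or maximal), and it is far from clear that it is the correct Banach-space analogue; your own closing remark that the admissible $(p,q,r)$ should be tied to the geometry of $\mathcal{X}$ is the honest assessment that this remains an open problem, not a theorem with a proof.
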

A result in Hilbert space frame theory which is along with fundamental inequality is the following result.
\begin{theorem}\label{GSPECTRUM}\cite{CASAZZALEON, KORNELSON}
\textbf{Let 	 $n\geq d$ and $\mathcal{H}$ be a $d$-dimensional Hilbert   space and let $a_1, \dots, a_n$ be a collection of non negative real numbers. For any self adjoint positive operator $S:\mathcal{H}\to  \mathcal{H}$ with eigenvalues $\lambda_1\geq \lambda_2 \geq \cdots\geq  \lambda_d >0$, the following are equivalent. 
\begin{enumerate}[\upshape(i)]
	\item Then  exists a  frame  $\{\tau_j\}_{j=1}^{n}$  for $\mathcal{H}$ whose frame operator $S_\tau=S$ and  $\|\tau_j\|=a_j$, $\forall 1\leq j\leq n$. 
	\item For all $1\leq m\leq d$,
	\begin{align*}
		\sum_{k=1}^{m}a_k^2\leq \sum_{k=1}^{m}\lambda_k\quad \text{and} \quad 	\sum_{j=1}^{n}a_j^2= \sum_{k=1}^{d}\lambda_k.
	\end{align*}
\end{enumerate}}
\end{theorem}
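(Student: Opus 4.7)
The plan is to recognize this statement as a frame-theoretic incarnation of the classical Schur--Horn theorem on diagonals versus spectra of self-adjoint matrices. The bridge is the analysis operator $\theta_\tau:\mathcal{H}\to \ell^2(\{1,\dots,n\})$, $h\mapsto(\langle h,\tau_j\rangle)_j$. Its adjoint $\theta_\tau^*$ is the synthesis operator and $S_\tau=\theta_\tau^*\theta_\tau$ is the frame operator, while the Gram matrix $G\coloneqq\theta_\tau\theta_\tau^*\in M_n$ is positive semidefinite, has diagonal entries exactly $\|\tau_j\|^2=a_j^2$, and shares the nonzero spectrum of $S_\tau$, namely $\lambda_1,\dots,\lambda_d$ (with $n-d$ additional zero eigenvalues).

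For (i) $\Rightarrow$ (ii), I would apply the classical Schur--Horn theorem to $G$, which states that the diagonal of any self-adjoint matrix is majorized by its spectrum. After sorting $a_1\geq a_2\geq\cdots\geq a_n$, this gives
\begin{align*}
\sum_{k=1}^m a_k^2 \leq \sum_{k=1}^m \lambda_k, \quad 1\leq m\leq d,
\end{align*}
and the trace identity $\sum_{j=1}^n a_j^2=\operatorname{tr}(G)=\operatorname{tr}(S_\tau)=\sum_{k=1}^d\lambda_k$ supplies the required equality for $m=d$ (and fills in the padded zeros for $d<m\leq n$).

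For (ii) $\Rightarrow$ (i), I would invoke Horn's converse: given sequences $(\mu_i)_{i=1}^n=(\lambda_1,\dots,\lambda_d,0,\dots,0)$ and $(a_i^2)_{i=1}^n$ with the former majorizing the latter and matching total sum, there exists an $n\times n$ self-adjoint matrix $G$ with spectrum $(\mu_i)$ and diagonal $(a_i^2)$. Since $\mu_i\geq 0$, such $G$ is positive semidefinite of rank $\leq d$, so it factors as $G=T T^*$ with $T$ an $n\times d$ matrix. Interpreting the rows of $T$ as vectors in a fixed orthonormal coordinatisation of $\mathcal{H}$ produces $\{\tau_j\}_{j=1}^n$ with $\|\tau_j\|^2=G_{jj}=a_j^2$ and frame operator $T^*T$, which shares the nonzero spectrum $\lambda_1,\dots,\lambda_d$ of $S$. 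A final unitary rotation on $\mathcal{H}$ conjugates $T^*T$ onto the prescribed $S$ without changing any $\|\tau_j\|$.

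The main obstacle is the $(ii)\Rightarrow(i)$ direction: Horn's converse to Schur--Horn is itself nontrivial, typically proved by an inductive plane-rotation construction or, in the constructive spirit of Casazza--Leon and Kornelson, by an explicit algorithm that builds $G$ column by column while maintaining majorization via interlacing. A secondary subtlety is obtaining a frame whose frame operator is \emph{exactly} $S$, rather than merely unitarily equivalent to it on $\mathcal{H}$; this is handled by the concluding unitary in the codomain, which preserves the vector norms $\|\tau_j\|$ while rotating the eigenbasis of $T^*T$ onto that of $S$.
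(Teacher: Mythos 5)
Your proposal is essentially correct, but note that the paper does not prove Theorem \ref{GSPECTRUM} at all: it is recalled verbatim from the cited references (Casazza--Leon, Kornelson--Larson) as background motivating Conjecture \ref{GCONJECTURE}, so there is no in-paper argument to compare against. Judged on its own terms, your reduction is the standard ``Schur--Horn for frames'' route and it works: passing to the Gram matrix $G=\theta_\tau\theta_\tau^*$, whose diagonal is $(a_j^2)_j$ and whose spectrum is $(\lambda_1,\dots,\lambda_d,0,\dots,0)$, turns (i) $\Rightarrow$ (ii) into Schur's majorization inequality plus the trace identity, and (ii) $\Rightarrow$ (i) into Horn's converse followed by a rank-$d$ factorization $G=TT^*$ and a final unitary conjugation to land on the prescribed $S$ exactly. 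Two small points deserve explicit mention: first, the theorem as stated in the paper tacitly assumes $a_1\geq\cdots\geq a_n$ (otherwise the partial-sum condition in (ii) is not the majorization condition), and your ``after sorting'' remark should be flagged as an interpretation of the statement rather than a free normalization; second, in the (ii) $\Rightarrow$ (i) direction you must verify the majorization inequalities for $d<m\leq n$ as well, which follow from the trace equality exactly as you indicate for the forward direction but should be stated there too. The cited sources instead give a direct inductive/algorithmic construction of the frame vectors without invoking Horn's theorem as a black box; your route is shorter and more conceptual but outsources the genuinely hard combinatorial content to Horn's converse, which is fine provided that result is properly referenced.
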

Theorem \ref{GSPECTRUM} leads to the following conjecture. 
\begin{conjecture}\label{GCONJECTURE}
\textbf{Let 	 $n\geq d$ and $\mathcal{X}$ be a $d$-dimensional Banach    space and let $a_1, \dots, a_n,$ $  b_1, \dots, b_n$,  $  c_1\dots ,c_n$ be a collection of non negative real numbers. For any  operator $S:\mathcal{X}\to  \mathcal{X}$ with eigenvalues $\lambda_1\geq \lambda_2 \geq \cdots\geq  \lambda_d >0$, the following are equivalent. 
\begin{enumerate}[\upshape(i)]
	\item Then  exists an ASF  $(\{f_j\}_{j=1}^{n}, \{\tau_j\}_{j=1}^{n})$  for $\mathcal{X}$ whose frame operator $S_{f,\tau}=S$ and  $\|f_j\|=a_j$, $\|\tau_j\|=b_j$, $|f_j(\tau_j)|=c_j$, $\forall 1\leq j\leq n$. 
	\item For all $1\leq m\leq d$,
	\begin{align*}
		\sum_{k=1}^{m}a_k^p\leq \sum_{k=1}^{m}\lambda_k\quad \text{and} \quad 	\sum_{j=1}^{n}a_j^p= \sum_{k=1}^{d}\lambda_k,
	\end{align*}
	\begin{align*}
	\sum_{k=1}^{m}b_k^q\leq \sum_{k=1}^{m}\lambda_k\quad \text{and} \quad 	\sum_{j=1}^{n}b_j^q= \sum_{k=1}^{d}\lambda_k,
\end{align*}
	\begin{align*}
	\sum_{k=1}^{m}c_k^r\leq \sum_{k=1}^{m}\lambda_k\quad \text{and} \quad 	\sum_{j=1}^{n}c_j^r= \sum_{k=1}^{d}\lambda_k,
\end{align*}
for some real $p,q,r>0$.
\end{enumerate}}
\end{conjecture}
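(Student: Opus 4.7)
The plan is to treat the two directions separately, anticipating that (i) $\Rightarrow$ (ii) is tractable with classical tools while (ii) $\Rightarrow$ (i) is the genuine difficulty.

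For (i) $\Rightarrow$ (ii), the starting point is the trace identity. Since $\mathcal{X}$ is $d$-dimensional and $S = S_{f,\tau} = \sum_{j=1}^{n} f_j(\cdot)\tau_j$ is a sum of rank-one operators, we have
\begin{equation*}
\sum_{k=1}^{d}\lambda_k = \operatorname{tr}(S) = \sum_{j=1}^{n} f_j(\tau_j),
\end{equation*}
which gives the full-sum equality in (ii) for the sequence $(c_j)$ up to the absolute values (one would take $r=1$ and use that the $c_j$ are the magnitudes of the diagonal entries of $S$). For the majorization inequalities $\sum_{k=1}^{m} a_k^p \leq \sum_{k=1}^{m}\lambda_k$, I would build a Banach-space analog of Schur--Horn by fixing an Auerbach basis $\{u_k\}_{k=1}^{d}$ of $\mathcal{X}$ with biorthogonal functionals $\{u_k^*\}$, expanding $f_j$ and $\tau_j$ in this basis, and comparing the partial sums of the diagonal entries $u_k^*(Su_k)$ with the partial sums of the eigenvalues via the variational characterization of the latter. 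Because the conjecture only asks for the existence of \emph{some} exponents $p,q,r > 0$, the first technical task is to pin down which exponents are compatible with the geometry of $\mathcal{X}$; I expect $p$ and $q$ to be controlled by the cotype of $\mathcal{X}^*$ and $\mathcal{X}$ respectively, while $r$ can be taken equal to $1$ thanks to the trace identity.

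For (ii) $\Rightarrow$ (i), I would adapt the inductive spectral-tetris-type construction of Kornelson and Larson used in the Hilbert case. Starting from a target operator $S$ with spectrum $(\lambda_k)$, I would successively attach rank-one operators $f_j \otimes \tau_j$ with prescribed $\|f_j\|=a_j$, $\|\tau_j\|=b_j$, $|f_j(\tau_j)|=c_j$, so that at stage $j$ the residual $S - \sum_{i\leq j} f_i \otimes \tau_i$ remains positive in the appropriate sense and its spectrum shrinks in a controlled way. At each step the admissibility of the next increment reduces to finding $f\in\mathcal{X}^*$ and $\tau\in\mathcal{X}$ with prescribed norms and a prescribed value of $f(\tau)$, acting on a distinguished eigendirection of the current residual. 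The majorization inequalities in (ii) are precisely what is needed to keep this greedy process feasible at every stage, and the full-sum equalities close out the construction at step $n$.

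The hard part will be twofold. First, unlike in a Hilbert space, one does not have a canonical identification of $\mathcal{X}^*$ with $\mathcal{X}$, so $f_j$ and $\tau_j$ are genuinely independent and the triple constraint $(a_j, b_j, c_j)$ couples them in a way with no Hilbert-space counterpart; decoupling and simultaneously controlling all three norms is the main new obstruction, and it is what most likely forces the exponents $p, q, r$ in (ii) to depend on the Banach-space geometry rather than being the universal $2$. Second, the conjecture is stated for an arbitrary invertible $S$ with positive eigenvalues, but in a Banach space such an $S$ need not be diagonalizable; one must read the eigenvalue condition algebraically and handle possible Jordan blocks, which seems to require replacing the Schur--Horn ingredient with a more flexible majorization statement, perhaps phrased in terms of the Weyl numbers or approximation numbers of $S$ rather than its eigenvalues.
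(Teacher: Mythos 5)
There is nothing in the paper to compare your argument against: the statement you are addressing is Conjecture \ref{GCONJECTURE}, which the paper explicitly leaves open (it is formulated by analogy with Theorem \ref{GSPECTRUM} of Casazza--Leon and Kornelson--Larson for Hilbert spaces, and the paper offers no proof in either direction). So your submission should be judged as an attempted resolution of an open problem, and as such it is a research plan rather than a proof: neither direction is actually carried out, and both of your ``hard parts'' are precisely where the content of the conjecture lives.

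Beyond being incomplete, two steps as stated would fail. First, the trace identity gives $\sum_{k=1}^{d}\lambda_k=\operatorname{tr}(S)=\sum_{j=1}^{n}f_j(\tau_j)$, a \emph{signed} sum, whereas $c_j=|f_j(\tau_j)|$; so $\sum_j c_j\geq\sum_k\lambda_k$ with equality only when all $f_j(\tau_j)$ share a sign (or phase), and your claim that $r=1$ works ``thanks to the trace identity'' is false in general. Second, the Schur--Horn mechanism you invoke compares eigenvalues with \emph{diagonal entries}; in the Hilbert setting the quantities $\|\tau_j\|^2$ are the diagonal entries of the Gram matrix $\theta_\tau\theta_\tau^*$, which shares its nonzero spectrum with $S$. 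In the Banach setting the analogous Gram-type operator is $\theta_f\theta_\tau$ on $\ell^p(\mathbb{N})$, whose diagonal entries are $f_j(\tau_j)$ --- not $\|f_j\|^p$ or $\|\tau_j\|^q$. Since $\|f_j\|\,\|\tau_j\|\geq|f_j(\tau_j)|$ with no reverse control, no majorization statement for the $a_j$ and $b_j$ follows from expanding in an Auerbach basis; this is exactly the obstruction that makes (i) $\Rightarrow$ (ii) nontrivial, and your outline does not overcome it. The (ii) $\Rightarrow$ (i) direction is likewise only a declaration of intent to adapt the Hilbert-space induction, with the decisive feasibility lemma (realizing a prescribed triple $(a_j,b_j,c_j)$ while keeping the residual's spectrum under control without self-adjointness or diagonalizability) left unproved.
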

Celebrated scaling problem for frames for Hilbert spaces reads as follows and to state it, first we need a definition.
\begin{definition}\cite{KUTYNIOK}
A 	frame $\{\tau_n\}_n$ for a Hilbert space  $ \mathcal{H}$ is said to be \textbf{scalable} if there exists a sequence of (non negative) scalars $\{a_n\}_n$ such that 
\begin{align*}
	\{a_n\tau_n\}_n \text{  is a Parseval frame for } \mathcal{H}.
\end{align*}
\end{definition}
\begin{problem}\cite{KUTYNIOK, CASAZZABRIEF}
(\textbf{Scaling problem})	\textbf{Classify frames $\{\tau_n\}_n$ for a Hilbert space  $ \mathcal{H}$  so that there is a sequence of  scalars $\{a_n\}_n$ such that $\{a_n\tau_n\}_n$ is a Parseval frame   $ \mathcal{H}$, i.e., the frame $\{\tau_n\}_n$ is scalable.}
\end{problem}
Here we state scaling problem for Banach spaces with the introduction of scaling for p-approximate Schauder frames.
\begin{definition}
A 	p-ASF  $(\{f_n\}_n, \{\tau_n\}_n)$ for a Banach space  $ \mathcal{X}$ is said to be \textbf{p-scalable} if there exist  sequences of  scalars $\{a_n\}_n$, $\{b_n\}_n$ such that 
\begin{align*}
(\{a_nf_n\}_n, \{b_n\tau_n\}_n)	 \text{  is a Parseval p-ASF for } \mathcal{X}.
\end{align*}	
\end{definition}
\begin{problem}
(\textbf{Scaling problem for Banach spaces}) \textbf{Classify p-ASFs $(\{f_n\}_n, \{\tau_n\}_n)$ for a Banach  space  $ \mathcal{X}$  so that there are  sequences of  scalars $\{a_n\}_n$, $\{b_n\}_n$ such that $(\{a_nf_n\}_n, \{b_n\tau_n\}_n)$ is a Parseval p-ASF  for $ \mathcal{X}$, i.e., p-ASF $(\{f_n\}_n, \{\tau_n\}_n)$ is p-scalable.}
\end{problem}
Let us now recall the Kothe-Lorch theorem for Riesz basis for Hilbert spaces. 
\begin{theorem}\label{KOTHELORCH}\cite{HEIL, GOHBERG}
\textbf{(Kothe-Lorch theorem)}  For a collection 	$\{\tau_n\}_n$ in  a Hilbert space  $ \mathcal{H}$, the following are equivalent.
\begin{enumerate}[\upshape(i)]
	\item $\{\tau_n\}_n$ is a Riesz basis for  $ \mathcal{H}$.
	\item $\{\tau_n\}_n$ is an unconditional Schauder  basis for  $ \mathcal{H}$, and 
	\begin{align*}
		0<\inf_{n\in \mathbb{N}}\|\tau_n\|\leq \sup_{n\in \mathbb{N}}\|\tau_n\|<\infty.
	\end{align*}
\end{enumerate}
\end{theorem}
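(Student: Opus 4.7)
The plan is to prove the two directions separately; the $(i)\Rightarrow(ii)$ direction is immediate from the definition, so the substance of the argument lies in the converse.

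For $(i)\Rightarrow(ii)$, assume $\tau_n=T\omega_n$ for an orthonormal basis $\{\omega_n\}_n$ of $\mathcal{H}$ and a bounded invertible $T:\mathcal{H}\to\mathcal{H}$. Since orthonormal bases are Schauder bases and are unconditional, the image $\{\tau_n\}_n$ is a Schauder basis that is also unconditional, because bounded invertible operators preserve (unconditional) convergence of series. The norm bounds follow from
\[
\|T^{-1}\|^{-1}=\|T^{-1}\|^{-1}\|\omega_n\|\le\|T\omega_n\|=\|\tau_n\|\le\|T\|\|\omega_n\|=\|T\|,
\]
so $0<\inf_n\|\tau_n\|\le\sup_n\|\tau_n\|<\infty$.

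For $(ii)\Rightarrow(i)$, write $m\coloneqq\inf_n\|\tau_n\|>0$, $M\coloneqq\sup_n\|\tau_n\|<\infty$, and let $K$ be the unconditional basis constant of $\{\tau_n\}_n$, so that for every choice of signs $\varepsilon_n\in\{-1,+1\}$ and every finitely supported scalar sequence $(c_n)$,
\[
K^{-1}\left\|\sum_{n} c_n\tau_n\right\|\le\left\|\sum_{n}\varepsilon_n c_n\tau_n\right\|\le K\left\|\sum_{n} c_n\tau_n\right\|.
\]
The key trick is Rademacher averaging. Taking $\varepsilon_n$ to be independent Rademacher variables and using the Hilbert space inner product together with $\mathbb{E}[\varepsilon_n\varepsilon_m]=\delta_{n,m}$ yields
\[
\mathbb{E}\left\|\sum_{n}\varepsilon_n c_n\tau_n\right\|^2=\sum_{n}|c_n|^2\|\tau_n\|^2.
\]
Squaring the unconditionality estimate and taking expectations then produces
\[
K^{-2}\sum_{n}|c_n|^2\|\tau_n\|^2\le\left\|\sum_{n} c_n\tau_n\right\|^2\le K^2\sum_{n}|c_n|^2\|\tau_n\|^2,
\]
and inserting the bounds $m\le\|\tau_n\|\le M$ gives $(m/K)\|(c_n)\|_{\ell^2}\le\|\sum_n c_n\tau_n\|\le KM\,\|(c_n)\|_{\ell^2}$. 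Fix any orthonormal basis $\{\omega_n\}_n$ of $\mathcal{H}$ and define $T\omega_n\coloneqq\tau_n$, extended by linearity and continuity. The two inequalities above show that $T$ extends to a bounded linear bijection with bounded inverse, realising $\{\tau_n\}_n$ as the image of an orthonormal basis under a bounded invertible operator, which is the definition of a Riesz basis.

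The main obstacle is the $(ii)\Rightarrow(i)$ direction, and within it the verification that Rademacher averaging together with bounded basis norms upgrades unconditionality to the Riesz-type two-sided $\ell^2$ estimate; once this estimate is in place, constructing the intertwining operator $T$ with an arbitrary orthonormal basis is routine.
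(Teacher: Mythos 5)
The paper offers no proof of this statement: it is quoted verbatim from the cited references \cite{HEIL, GOHBERG} as a known classical theorem, so there is nothing internal to compare against. Your argument is correct and is essentially the standard proof found in those sources: the easy direction transports the orthonormal basis properties through $T$ and $T^{-1}$, and the converse upgrades unconditionality to the two-sided $\ell^2$ estimate by Rademacher averaging (equivalently, the generalized parallelogram law / Orlicz's theorem), after which the intertwining operator with an arbitrary orthonormal basis does the rest. Two small points are worth a sentence each if this were to be written out in full: the finiteness of the unconditional basis constant $K$ is itself a consequence of the uniform boundedness principle applied to the sign-change operators $\sum_n c_n\tau_n \mapsto \sum_n \varepsilon_n c_n\tau_n$, and the surjectivity of $T$ follows because $T$ has closed range (being bounded below) and that range contains $\operatorname{span}\{\tau_n\}_n$, which is dense since $\{\tau_n\}_n$ is a Schauder basis. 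With those remarks supplied, the proof is complete.
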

Based on Theorem \ref{KOTHELORCH}  we  formulate the following problem. 
\begin{problem}\label{KOTHELORCBANACH}\textbf{(Kothe-Lorch problem for Banach spaces)}
\textbf{For 	a collection 	$\{\tau_n\}_n$ in  a Banach  space  $ \mathcal{X}$ and a collection $\{f_n\}_n$ in    $ \mathcal{X}^*$, whether  the following are equivalent?
\begin{enumerate}[\upshape(i)]
	\item $(\{f_n\}_n, \{\tau_n\}_n)$ is a p-approximate Riesz basis for  $ \mathcal{X}$.
	\item $\{\tau_n\}_n$ is an unconditional Schauder  basis for  $ \mathcal{X}$, 
	\begin{align*}
		\phi(x)=\sum_{n=1}^{\infty}\phi(\tau_n)f_n(x), \quad \forall x \in \mathcal{X}, \forall \phi \in \mathcal{X}^*
	\end{align*}
and 
	\begin{align*}
		&0<\inf_{n\in \mathbb{N}}\|\tau_n\|\leq \sup_{n\in \mathbb{N}}\|\tau_n\|<\infty, \\
	& 0<\inf_{n\in \mathbb{N}}\|f_n\|\leq\sup_{n\in \mathbb{N}}\|f_n\|<\infty.
	\end{align*}
\end{enumerate}}
\end{problem}
Note that we always have (i) $\Rightarrow$ (ii) in Problem \ref{KOTHELORCBANACH}. 

For frames for Hilbert spaces there are three algorithms to get approximation to every element of the space. They are  Duffin-Schaeffer algorithm \cite{DUFFIN},   Grochenig-Chebyshev algorithm \cite{GROCHENIGACC} and Grochenig conjugate gradient algorithm \cite{GROCHENIGACC}. For p-ASFs we derive a Duffin-Schaeffer algorithm with stronger assumption and ask others as problems.
\begin{theorem}
	Let 	 $( \{f_n\}_{n},  \{\tau_n\}_{n} )$  be a    p-ASF for $\mathcal{X}$	 with bounds $a$ and  $b$.  For  $ x \in \mathcal{X}$, define 
$$ x_0\coloneqq0, ~x_n\coloneqq x_{n-1}+\frac{2}{a+b}S_{f,\tau}(x-x_{n-1}),\quad \forall n \geq1.$$
If $\|I_\mathcal{X} -\frac{2}{b+a}S_{f,\tau}\|\leq \frac{b-a}{b+a}$, 	then 
$$ \|x_n-x\|\leq \left(\frac{b-a}{b+a}\right)^n\|x\|, \quad\forall n \geq1.$$
In particular, $x_n\to x $ as $n \to \infty$.	
\end{theorem}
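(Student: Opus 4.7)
The plan is to mimic the classical Duffin--Schaeffer analysis by isolating a single contraction operator that governs the error at each iteration. Set
\begin{align*}
T \coloneqq I_\mathcal{X} - \frac{2}{a+b}S_{f,\tau},
\end{align*}
so that the hypothesis reads $\|T\| \leq \frac{b-a}{b+a}$, which is strictly less than $1$ (using $a>0$). The whole proof then reduces to showing that the error $x - x_n$ is nothing but $T^n x$.

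The key algebraic observation comes straight from the recurrence: for $n \geq 1$,
\begin{align*}
x - x_n = x - x_{n-1} - \frac{2}{a+b}S_{f,\tau}(x - x_{n-1}) = T(x - x_{n-1}).
\end{align*}
A short induction, combined with the initialisation $x_0 = 0$ (so $x - x_0 = x$), yields $x - x_n = T^n x$ for every $n \geq 1$. Submultiplicativity of the operator norm then gives
\begin{align*}
\|x_n - x\| = \|T^n x\| \leq \|T\|^n \|x\| \leq \left(\frac{b-a}{b+a}\right)^n \|x\|,
\end{align*}
which is the asserted estimate. Since $\frac{b-a}{b+a} < 1$, the right-hand side tends to $0$, proving convergence $x_n \to x$.

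Conceptually there is no real obstacle: the argument is operator-theoretic, uses none of the finer Banach-space structure, and does not require $S_{f,\tau}$ to be self-adjoint or positive (as one would in the Hilbert case) because the contraction estimate on $T$ is now imposed as a hypothesis rather than deduced from frame bounds. The only point that requires a little care is making sure that the recurrence identity is applied to the error $x - x_{n-1}$ rather than to $x_{n-1}$ itself, so that the constant term $x$ cancels and a clean geometric decay by the operator $T$ emerges.
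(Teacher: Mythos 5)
Your proof is correct and follows essentially the same route as the paper: unwind the recurrence to get $x - x_n = \left(I_\mathcal{X} - \frac{2}{a+b}S_{f,\tau}\right)^n x$ and then apply submultiplicativity of the operator norm together with the hypothesis $\left\|I_\mathcal{X} - \frac{2}{b+a}S_{f,\tau}\right\| \leq \frac{b-a}{b+a}$. Naming the contraction $T$ and spelling out the induction is just a cosmetic repackaging of the paper's argument.
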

\begin{proof}
Using the definition of $x_n'$s, we see that 
\begin{align*}
	x-x_n&=x-x_{n-1}-\frac{2}{a+b}S_{f, \tau}(x-x_{n-1})\\
	&=\left(I_\mathcal{X} -\frac{2}{b+a}S_{f,\tau}\right)(x-x_{n-1})\\
	&=\cdots=\left(I_\mathcal{X} -\frac{2}{b+a}S_{f, \tau}\right)^nx, \quad\forall x \in \mathcal{X} ,  \forall n \geq 1.
\end{align*}	
Therefore
\begin{align*}
	\|x_n-x\|\leq \left\|I_\mathcal{X} -\frac{2}{b+a}S_{f,\tau}\right\|^n\|x\|\leq \left(\frac{b-a}{b+a}\right)^n\|x\|, \quad\forall n \geq1.
\end{align*}	
\end{proof}
\begin{problem} \textbf{(Frame algorithm problems for Banach spaces)}
\textbf{Whether there is a 
	\begin{enumerate}[\upshape(i)]
		\item Duffin-Schaeffer algorithm for p-ASFs?
		\item Grochenig-Chebyshev algorithm for p-ASFs?
		\item Grochenig conjugate gradient algorithm for p-ASFs?
	\end{enumerate}}
\end{problem}
In the groundbreaking paper \cite{ALDROUBICABRELLIMOLTERTANG}, Aldroubi, Cabrelli, Molter, and Tang introduced the notion of dynamical sampling for Hilbert spaces and solved for finite dimensional Hilbert spaces. The problem reads as follows. 
\begin{problem}\cite{ALDROUBICABRELLIMOLTERTANG, ALDROUBIHUANGPETROSYAN, ALDROUBICABRELLICAKMAKMOLTERPETROSYAN, DIAZROCIOMEDRIMOLTER}
	\label{DYNAMICALHILBERT}
\textbf{(Dynamical sampling  problem for Hilbert  spaces)} \textbf{Let $\mathcal{H}$ be a Hilbert space (finite or infinite dimensional).	Let $\mathbb{K}, \mathbb{M} \subseteq \mathbb{N}$,  $T:\mathcal{H}\to \mathcal{H}$ be a bounded linear operator and $\{\tau_k\}_{k\in \mathbb{K}}$ be a sequence in  $ \mathcal{H}$. Find conditions on $\mathbb{K}, \mathbb{M}$ and $T$ such that 
\begin{enumerate}[\upshape(i)]
	\item $\{T^m\tau_k\}_{k\in \mathbb{K}, m \in \mathbb{M}}$ is an orthonormal basis for   $ \mathcal{H}$.
	\item $\{T^m\tau_k\}_{k\in \mathbb{K}, m \in \mathbb{M}}$ is a Riesz  basis for   $ \mathcal{H}$.
		\item $\{T^m\tau_k\}_{k\in \mathbb{K}, m \in \mathbb{M}}$ is a Riesz  sequence  for   $ \mathcal{H}$.
	\item $\{T^m\tau_k\}_{k\in \mathbb{K}, m \in \mathbb{M}}$ is a frame  for   $ \mathcal{H}$.
		\item $\{T^m\tau_k\}_{k\in \mathbb{K}, m \in \mathbb{M}}$ is a Parseval frame  for   $ \mathcal{H}$.
	\item $\{T^m\tau_k\}_{k\in \mathbb{K}, m \in \mathbb{M}}$ is a Bessel sequence  for   $ \mathcal{H}$.
\end{enumerate}
If the operator $T$ is normal, then the set $\mathbb{M}$ can be replaced by any subset of $[0, \infty)$. }
\end{problem}

We formulate Problem  \ref{DYNAMICALHILBERT} for Banach spaces as follows.
\begin{problem}
\textbf{(Dynamical sampling  problem for Banach   spaces)} \textbf{Let $\mathcal{X}$ be a Banach  space (finite or infinite dimensional).	Let $\mathbb{K}, \mathbb{M} \subseteq \mathbb{N}$, $U:\mathcal{X}\to \mathcal{X}$, $V:\mathcal{X}\to \mathcal{X}$ be  bounded linear operators,  $\{\tau_k\}_{k\in \mathbb{K}}$ be a sequence in  $ \mathcal{X}$ and $\{f_k\}_{k\in \mathbb{K}}$ be a sequence in  $ \mathcal{X}^*$. Find conditions on $\mathbb{K}, \mathbb{M}$,  $U$ and $V$ such that 
	\begin{enumerate}[\upshape(i)]
		\item $(\{f_kU^m\}_{k\in \mathbb{K}, m \in \mathbb{M}}, \{V^m\tau_k\}_{k\in \mathbb{K}, m \in \mathbb{M}})$ is a p-orthonormal basis for   $ \mathcal{X}$.
		\item $(\{f_kU^m\}_{k\in \mathbb{K}, m \in \mathbb{M}}, \{V^m\tau_k\}_{k\in \mathbb{K}, m \in \mathbb{M}})$ is a p-approximate  Riesz  basis for   $ \mathcal{X}$.
		\item $(\{f_kU^m\}_{k\in \mathbb{K}, m \in \mathbb{M}}, \{V^m\tau_k\}_{k\in \mathbb{K}, m \in \mathbb{M}})$ is a p-approximate Riesz  sequence  for   $ \mathcal{X}$.
		\item $(\{f_kU^m\}_{k\in \mathbb{K}, m \in \mathbb{M}}, \{V^m\tau_k\}_{k\in \mathbb{K}, m \in \mathbb{M}})$ is a p-approximate Schauder frame  for   $ \mathcal{X}$.
			\item $(\{f_kU^m\}_{k\in \mathbb{K}, m \in \mathbb{M}}, \{V^m\tau_k\}_{k\in \mathbb{K}, m \in \mathbb{M}})$ is a Parseval p-approximate Schauder frame  for   $ \mathcal{X}$.
		\item $(\{f_kU^m\}_{k\in \mathbb{K}, m \in \mathbb{M}}, \{V^m\tau_k\}_{k\in \mathbb{K}, m \in \mathbb{M}})$ is a p-approximate Bessel sequence  for   $ \mathcal{X}$.
	\end{enumerate}
The set $\mathbb{M}$ can be replaced by subsets $\Sigma \subseteq \mathbb{C}$ whenever $U^\alpha $ and $V^\alpha$ make sense for $\alpha \in \Sigma$. For instance,  if $T$ is in the class of  bounded linear operators 	whose resolvent contains  $(-\infty,0]$   (sectorial operators) \cite{KOMATSU, MARKUSFUNCTIONAL}, then  we can take $\Sigma$ to be any subset of $\mathbb{C}$.}
\end{problem}
Now we recall the phase retrieval and norm retrieval in Hilbert spaces which originated from the fundamental work of Balan, Casazza, and Edidin \cite{BALANCASAZZAEDIDIN}.
\begin{definition}\cite{BALANCASAZZAEDIDIN, BAHMANPOURCAHILLCASAZZAJASPERWOODLAND, CAHILLCASAZZADAUBECHIES, BOTELHOANDRADECASAZZAVANTREMAIN}
	\begin{enumerate}[\upshape(i)]
		\item  \textbf{Let $\mathcal{H}$ be a Hilbert space (finite or infinite dimensional). A frame  $\{\tau_n\}_{n}$  for   $ \mathcal{H}$ does  phase retrieval   for $\mathcal{H}$ if $x,y  $  are any two elements of $\mathcal{H}$ satisfying 
			\begin{align*}
				|	\langle x, \tau_n \rangle |= |	\langle y, \tau_n \rangle |, \quad \forall n \in \mathbb{N},
			\end{align*}
			then $y=\alpha x$, for some $|\alpha|=1$.}
		\item  \textbf{Let $\mathcal{H}$ be a Hilbert space (finite or infinite dimensional). A frame  $\{\tau_n\}_{n}$  for   $ \mathcal{H}$ does  norm  retrieval   for $\mathcal{H}$ if $x,y  $  are any two elements of $\mathcal{H}$ satisfying 
			\begin{align*}
				|	\langle x, \tau_n \rangle |= |	\langle y, \tau_n \rangle |, \quad \forall n \in \mathbb{N},
			\end{align*}
			then $\|y\|=\|x\|$.}
	\end{enumerate}
\end{definition}
Following is the phase retrieval and norm retrieval problems for Hilbert spaces.
\begin{problem}\cite{BALANCASAZZAEDIDIN, BAHMANPOURCAHILLCASAZZAJASPERWOODLAND, CAHILLCASAZZADAUBECHIES, BOTELHOANDRADECASAZZAVANTREMAIN}
	\textbf{\begin{enumerate}[\upshape(i)]
		\item (Phase retrieval problem for Hilbert spaces) For a given Hilbert space $\mathcal{H}$, classify frames $\{\tau_n\}_{n}$  for $\mathcal{H}$ which does phase retrieval for  $\mathcal{H}$. 
		\item (Norm retrieval problem for Hilbert spaces) For a given Hilbert space for $\mathcal{H}$, classify frames $\{\tau_n\}_{n}$ for $\mathcal{H}$ which does  norm retrieval for $\mathcal{H}$.
	\end{enumerate}}
\end{problem}
Phase retrieval problem poses challenges even in finite dimensions. For instance, phase retrieval can be classified only for real Hilbert spaces using complement property \cite{BANDEIRACAHILLDUSTINNELSON}. Number of vectors needed and a full description of them to do phase retrieval in arbitrary dimension is still open \cite{CASAZZAWOODLANDPHASE, CONCAEDIDHERINGVINZANT, VINZANT, BODMANNHAMMEN, FICKUSMIXONNELSONWANG}.\\
We now state the phase retrieval and norm retrieval notions and problems for p-approximate Schauder frames for Banach space. 
\begin{definition}
	\begin{enumerate}[\upshape(i)]
	\item  \textbf{Let $\mathcal{X}$ be a Banach  space (finite or infinite dimensional). A p-approximate Schauder frame   $(\{f_n\}_{n}, \{\tau_n\}_{n})$  for   $ \mathcal{X}$ does  phase retrieval   for $\mathcal{X}$ if $x,y  $  are any two elements of $\mathcal{X}$ and $\phi,\psi  $  are any two elements of $\mathcal{X}^*$   satisfying 
		\begin{align*}
		|\phi(\tau_n)|=|\psi(\tau_n)	|	\quad \text{and}	\quad |f_n(x)|=|f_n(y)|, \quad \forall n \in \mathbb{N},
		\end{align*}
		then $y=\alpha x$, for some $|\alpha|=1$ and  $\psi=\beta \phi$, for some $|\beta|=1$}.
	\item  \textbf{Let $\mathcal{X}$ be a Banach  space (finite or infinite dimensional). A p-approximate Schauder frame $(\{f_n\}_{n}, \{\tau_n\}_{n})$  for   $ \mathcal{X}$ does  norm  retrieval   for $\mathcal{X}$ if $x,y  $  are any two elements of $\mathcal{X}$ and $\phi,\psi  $  are any two elements of $\mathcal{X}^*$ satisfying 
		\begin{align*}
	|\phi(\tau_n)|=|\psi(\tau_n)	|	\quad \text{and}	\quad	|f_n(x)|=|f_n(y)|, \quad \forall n \in \mathbb{N},
		\end{align*}
		then $\|y\|=\|x\|$ and  $\|\phi\|=\|\psi\|$.}
\end{enumerate}	
\end{definition}

\begin{problem}
	\textbf{\begin{enumerate}[\upshape(i)]
		\item (Phase retrieval problem for Banach  spaces) For a given Banach  space    $ \mathcal{X}$, classify p-approximate Schauder frames  $(\{f_n\}_{n}, \{\tau_n\}_{n})$ for $\mathcal{X}$    which does phase retrieval for   $ \mathcal{X}$. 
		\item (Norm retrieval problem for Banach  spaces) For a given Banach  space    $ \mathcal{X}$, classify p-approximate Schauder frames  $(\{f_n\}_{n}, \{\tau_n\}_{n})$ for   $ \mathcal{X}$ which does  norm retrieval for   $ \mathcal{X}$.
\end{enumerate}}	
\end{problem}
Our next problem is motivated from the works of Dykema and Strawn in Hilbert spaces \cite{CAHILLMIXONSTRAWN, STRAWNBOOK, NEEDHAMSHONKWILER, STRAWN2011, DYKEMASTRAWN, BODMANNHAAS2015, GIOL, STRAWNOPTIMIZATION} and Corach, Pacheco, and Stojanoff \cite{CORACHPACHECOSTOJANOFF}.
\begin{problem} 	\textbf{(Algebraic geometry and topological problems for approximate Schauder frames for Banach spaces)}
	\textbf{Let $n\geq d$ be fixed natural numbers and $\mathcal{X}$ be a fixed $d$-dimensional Banach space. Define 
	\begin{align*}
		\mathscr{F}^n(\mathbb{K}^d)\coloneqq \{(&\{f_j\}_{j=1}^n, \{\tau_j\}_{j=1}^n): (\{f_j\}_{j=1}^n, \{\tau_j\}_{j=1}^n) \text{ is a tight ASF for } \mathcal{X} \text{ such that}\\
		& \|f_j\|=\|\tau_j\|=|f_j(\tau_j)|=1, \forall 1\leq j \leq n\}.
	\end{align*}
\begin{enumerate}[\upshape(i)]
	\item Whether there is an explicit description of $	\mathscr{F}^n(\mathbb{K}^d)$?
	\item What is a  (good) topology on $	\mathscr{F}^n(\mathbb{K}^d)$? In particular, whether $	\mathscr{F}^n(\mathbb{K}^d)$ is path-connected?
	\item Whether $	\mathscr{F}^n(\mathbb{K}^d)$ is a manifold?
\end{enumerate}}
\end{problem}
\begin{problem} 	\textbf{(Topological problem for p-approximate Schauder frames for Banach spaces)}
	\textbf{Let $p\in[1, \infty), $ and $\mathcal{X}$ be a fixed Banach space (finite or infinite dimensional). Define 
		\begin{align*}
			\mathscr{F}^p(\mathcal{X})\coloneqq \{(\{f_n\}_{n}, \{\tau_n\}_{n}&: (\{f_n\}_{n}, \{\tau_n\}_{n}) \text{ is a p-ASF for } \mathcal{X}\}.
		\end{align*}
		\begin{enumerate}[\upshape(i)]
			\item Whether there is an explicit description of $	\mathscr{F}^p(\mathcal{X})$?
			\item What is a  (good) topology on $	\mathscr{F}^p(\mathcal{X})$? In particular, whether $		\mathscr{F}^p(\mathcal{X})$ is path-connected?
			\item Whether $		\mathscr{F}^p(\mathcal{X})$ is a Banach manifold?
	\end{enumerate}}
\end{problem}

We observe that there are two important results in the theory of frames for Hilbert spaces, one connects Bessel sequences to frames and another connects frames to Riesz bases. First one,  due to Li and Sun \cite{LISUN} states that  every Bessel sequence can be extended to a frame by adding extra elements, if necessary. Second one, known as Naimark-Han-Larson dilation theorem \cite{HANLARSON2000, CZAJA, KASHINKULIKOVA} states that a  frame for a  Hilbert space can be dilated to a Riesz basis.    For p-approximate Schauder frames (even for approximate Schauder frames), using a result (which used approximation properties \cite{CASAZZA2001})  of Casazza and Christensen \cite{CASAZZA2008} it is proved by Krishna and Johnson \cite{KRISHNAJOHNSON3} that p-approximate Bessel sequences may not be always  extended to p-approximate Schauder frames. However, using explicit construction, Krishna and Johnson  \cite{MAHESHJOHNSON2} proved that p-approximate Schauder frame can always be dilated to a p-approximate Riesz basis. It is also proved in  \cite{MAHESHJOHNSON2} that the dilation is optimal, i.e., it will not add anything extra if the given collection is already p-approximate Riesz basis.

We next introduce the notion of continuous p-approximate Schauder frames. Our motivation is from the notion of continuous frames for Hilbert spaces introduced independently by Ali, Antoine, and Gazeau, \cite{ALIANTOINEGAZEAU} and Kaiser \cite{KAISERBOOK}, (see \cite{GABARDOHANFRAMESASSOCITED2003, RAHIMI}) and from the notion of continuous Schauder frames by Eisner and Freeman \cite{EISNERFREEMAN} (also see \cite{LILIHAN}).  All  integrals which appear in sequel are in weak sense (Pettis integrals \cite{TALAGRAND}).
\begin{definition}
Let $(\Omega, \mu)	$ be a measure space  and $p\in[1, \infty)$. Let $\{\tau_\alpha\}_{\alpha\in \Omega}$ be a collection  in a Banach space  $\mathcal{X}$ and 	$\{f_\alpha\}_{\alpha\in \Omega}$ be a collection in  $\mathcal{X}^*$ (dual of  $\mathcal{X}$). The pair $ (\{f_\alpha \}_{\alpha \in \Omega}, \{\tau_\alpha \}_{\alpha \in \Omega}) $ is said to be a \textbf{continuous p-approximate Schauder frame (continuous p-ASF)} for $\mathcal{X}$ if the following conditions are satisfied.
\begin{enumerate}[\upshape(i)]
	\item For each $x \in \mathcal{X}$, the map $\Omega \ni \alpha \mapsto  f_\alpha(x)\in \mathbb{R} \text{ or } \mathbb{C}$ is measurable. 
	\item For each $u \in \mathcal{L}^p(\Omega, \mu)$, the map $\Omega \ni \alpha \mapsto  u(\alpha)\tau_\alpha \in \mathcal{X}$ is measurable. 
	\item The map  (\textbf{continuous analysis operator})
	\begin{align*}
		\theta_f: \mathcal{X}\ni x \mapsto \theta_f  \in \mathcal{L}^p(\Omega, \mu); \quad \theta_f x: \Omega \ni \alpha \mapsto (\theta_f x)(\alpha)\coloneqq f_\alpha(x)\in \mathbb{R} \text{ or } \mathbb{C}
	\end{align*}
	is a well-defined bounded linear  operator.
	\item The map  (\textbf{continuous synthesis operator})
	\begin{align*}
		\theta_\tau : \mathcal{L}^p(\Omega, \mu)\ni u \mapsto \theta_\tau u\coloneqq \int_\Omega u(\alpha)\tau_\alpha \,d\mu(\alpha)\in \mathcal{X}	
	\end{align*}
	is a well-defined bounded linear  operator.
		\item The map (\textbf{continuous frame operator})
	\begin{align*}
		S_{f, \tau}:\mathcal{X}\ni x \mapsto S_{f, \tau}x\coloneqq \int_\Omega f_\alpha(x)\tau_\alpha \,d\mu(\alpha) \in
		\mathcal{X}
	\end{align*}
	is a well-defined bounded linear, invertible operator.	
\end{enumerate}
Constants $a>0$ and $b>0$ satisfying 
\begin{align*}
	a\|x\|\leq \|S_{f,\tau}	x\|\leq b \|x\|, \quad \forall x \in \mathcal{X},
\end{align*}
are called as \textbf{lower frame bound} and \textbf{upper frame bound}, respectively. If $S_{f,\tau}x=x$, $\forall x \in \mathcal{X}$, then we say that $ (\{f_\alpha \}_{\alpha \in \Omega}, \{\tau_\alpha \}_{\alpha \in \Omega}) $ is a Parseval continuous p-ASF for $\mathcal{X}$.  If $S_{f,\tau}x=\lambda x$, for some nonzero scalar $\lambda$, $\forall x \in \mathcal{X}$, then we say that $ (\{f_\alpha \}_{\alpha \in \Omega}, \{\tau_\alpha \}_{\alpha \in \Omega}) $ is a tight continuous p-ASF for $\mathcal{X}$. If we do not demand  condition (v), then we say that $ (\{f_\alpha \}_{\alpha \in \Omega}, \{\tau_\alpha \}_{\alpha \in \Omega}) $ is a \textbf{continuous p-approximate Bessel sequence (p-ABS)} for $\mathcal{X}$. Constant  $b$ is called as \textbf{Bessel bound}.	
\end{definition}
Here is a genuine example.
\begin{example}
Let $p\in (1, \infty)$ and $q$ be the conjugate index of $p$. 	Define $\mathcal{X}\coloneqq \mathbb{R}^2$ equipped with $\|\cdot\|_1$ norm. Define $\Omega\coloneqq[0,2\pi]$ equipped with the usual Lebesgue measure. Now define 
	\begin{align*}
		\tau_\alpha\coloneqq (\cos \alpha, \sin\alpha), \quad \forall \alpha \in [0,2\pi].
	\end{align*}
and 

\begin{align*}
	f_\alpha: \mathcal{X} \ni (x,y)\mapsto f_\alpha(x,y)\coloneqq x \cos \alpha+y \sin \alpha\in \mathbb{R}, \quad \forall \alpha \in [0,2\pi].
\end{align*}
Then 
\begin{align*}
	\int_\Omega f_\alpha(x,y) \tau_\alpha \,d\mu(\alpha)&=\int_{0}^{2\pi}(x \cos \alpha+y \sin \alpha ) (\cos \alpha, \sin\alpha)\,d\alpha\\
	&=\left(\int_{0}^{2\pi}(x \cos \alpha+y \sin \alpha ) \cos \alpha \,d\alpha, \int_{0}^{2\pi}(x \cos \alpha+y \sin \alpha ) \sin\alpha\,d\alpha\right)\\
	&=\pi(x,y), \quad \forall (x,y)\in \mathbb{R}^2,
\end{align*}

\begin{align*}
	\|\theta_f(x,y)\|&=\left(\int_{0}^{2\pi}|(\theta_f(x,y))|^p\,d\alpha\right)^\frac{1}{p}=\left(\int_{0}^{2\pi}|f_\alpha(x,y))|^p\,d\alpha\right)^\frac{1}{p}\\
	&=\left(\int_{0}^{2\pi}|x \cos \alpha+y \sin \alpha|^p\,d\alpha\right)^\frac{1}{p}\\
	&\leq \left(\int_{0}^{2\pi}|x \cos \alpha|^p\,d\alpha\right)^\frac{1}{p}+\left(\int_{0}^{2\pi}|y \sin \alpha|^p\,d\alpha\right)^\frac{1}{p}\\
	&= |x|\left(\int_{0}^{2\pi}| \cos \alpha|^p\,d\alpha\right)^\frac{1}{p}+|y|\left(\int_{0}^{2\pi}| \sin \alpha|^p\,d\alpha\right)^\frac{1}{p}\\
	&=(2\pi)^\frac{1}{p}(|x|+|y|)=(2\pi)^\frac{1}{p}\|(x,y)\|_1, \quad \forall (x,y) \in \mathbb{R}^2,
\end{align*}
and 
\begin{align*}
	\|\theta_\tau u\|_1&=\left\|\int_{0}^{2\pi}u(\alpha)\tau_\alpha\,d\alpha\right\|_1=\left\|\int_{0}^{2\pi}u(\alpha)(\cos \alpha, \sin\alpha)\,d\alpha\right\|_1\\
	&=\left\|\left(\int_{0}^{2\pi}u(\alpha)\cos \alpha\,d\alpha, \int_{0}^{2\pi}u(\alpha) \sin\alpha\,d\alpha\right)\right\|_1\\
	&=\left|\int_{0}^{2\pi}u(\alpha)\cos \alpha\,d\alpha\right|+\left|\int_{0}^{2\pi}u(\alpha)\sin \alpha\,d\alpha\right|\\
	&\leq \left(\int_{0}^{2\pi}|u(\alpha)|^p\,d\alpha\right)^\frac{1}{p}\left(\int_{0}^{2\pi}|\cos \alpha|^q\,d\alpha\right)^\frac{1}{q}+\left(\int_{0}^{2\pi}|u(\alpha)|^p\,d\alpha\right)^\frac{1}{p}\left(\int_{0}^{2\pi}|\sin \alpha|^q\,d\alpha\right)^\frac{1}{q}\\
	&\leq 2(2\pi)^\frac{1}{q} \left(\int_{0}^{2\pi}|u(\alpha)|^p\,d\alpha\right)^\frac{1}{p}=2(2\pi)^\frac{1}{q}\|u\|_p, \quad \forall u \in \mathcal{L}^p[0,2\pi].
\end{align*}
Therefore  $ (\{f_\alpha \}_{\alpha \in \Omega}, \{\tau_\alpha \}_{\alpha \in \Omega}) $  is a continuous p-ASF for $\mathbb{R}^2$. 
Note that  $ (\{f_\alpha \}_{\alpha \in \Omega}, \{\tau_\alpha \}_{\alpha \in \Omega}) $ is also continuous 1-ASF.
\end{example}
Here is discretization problem for continous p-approximate Schauder frames  for Banach spaces. For frames for Hilbert spaces this was asked by  Ali, Antoine, and Gazeau \cite{ALIANTOINEGAZEAUBOOK} and solved by Fornasier, Rauhut, Freeman,  and Speegle \cite{FREEMAN2019, FORNASIERRAUHUT, RAUHUTULLRICH2011}. The problem for continuous Schauder frames has been asked by Eisner and Freeman is still open  \cite{EISNERFREEMAN}.
\begin{problem}\textbf{(Discretization problem for continuous p-approximate Schauder frames for Banach spaces)}
	\textbf{When a continuous p-approximate Schauder frame for a Banach space can be sampled to obtain a (discrete) p-approximate Schauder frame?}
\end{problem}
We now introduce the notion of continuous p-approximate Riesz basis for Banach spaces. Motivation comes from Theorem \ref{RIESZCHAR}.
\begin{definition}
A continuous p-ASF $ (\{f_\alpha \}_{\alpha \in \Omega}, \{\tau_\alpha \}_{\alpha \in \Omega}) $ for $\mathcal{X}$	is said to be \textbf{continuous p-approximate Riesz basis} if 
\begin{align*}
		\theta_fS_{f,\tau}^{-1}\theta_\tau=I_{\mathcal{L}^p(\Omega, \mu)}.	
\end{align*}
\end{definition}
\begin{definition}
 Let $\{\tau_\alpha\}_{\alpha\in \Omega}$ be a collection  in a Banach space  $\mathcal{X}$ and 	$\{f_\alpha\}_{\alpha\in \Omega}$ be a collection in  $\mathcal{X}^*$ (dual of  $\mathcal{X}$). The pair $ (\{f_\alpha \}_{\alpha \in \Omega}, \{\tau_\alpha \}_{\alpha \in \Omega}) $ is said to be a \textbf{continuous p-approximate Riesz family  } for $\mathcal{X}$ if $ (\{f_\alpha \}_{\alpha \in \Omega}, \{\tau_\alpha \}_{\alpha \in \Omega}) $	is a continuous  p-approximate Riesz basis    for $\overline{span}\{\tau_\alpha\}_{\alpha \in \Omega}$.
\end{definition}

Now we can formulate most of the notions,  conjectures and problems stated earlier for continuous p-approximate Schauder frames. Here are  some samples. Throughout,  $(\Omega, \mu)	$ is a measure space and $ (\{f_\alpha \}_{\alpha \in \Omega}, \{\tau_\alpha \}_{\alpha \in \Omega}) $ is at least  continuous p-approximate Bessel family whenever integrals appear.
\begin{conjecture}
\textbf{(Feichtinger conjecture for continuous p-approximate Schauder frames)
	Let  $ (\{f_\alpha \}_{\alpha \in \Omega}, \{\tau_\alpha \}_{\alpha \in \Omega}) $  be  a continuous p-ASF for   $\mathcal{X}$ such that 
	\begin{align*}
		&0<\inf_{\alpha \in \Omega}\|\tau_\alpha\|\leq \sup_{\alpha \in \Omega}\|\tau_\alpha\|<\infty ,\\
		&0<\inf_{\alpha \in \Omega}\|f_\alpha\|\leq \sup_{\alpha \in \Omega}\|f_\alpha\|<\infty.
	\end{align*}
	Then $ (\{f_\alpha \}_{\alpha \in \Omega}, \{\tau_\alpha \}_{\alpha \in \Omega}) $  can be partitioned into a finite union of continuous p-approximate Riesz families. Moreover, what is the number of partitions required?}	
\end{conjecture}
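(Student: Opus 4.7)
The plan is to deduce the continuous Feichtinger conjecture from its discrete counterpart (Conjecture \ref{FB}) via a sampling argument, using Theorem \ref{RIESZCHAR} to transfer the Riesz-family condition between the continuous and discrete worlds. First, using the hypotheses $0 < \inf_{\alpha \in \Omega} \|\tau_\alpha\| \leq \sup_{\alpha \in \Omega} \|\tau_\alpha\| < \infty$ and $0 < \inf_{\alpha \in \Omega} \|f_\alpha\| \leq \sup_{\alpha \in \Omega} \|f_\alpha\| < \infty$, together with boundedness of $\theta_f$ and $\theta_\tau$, I would produce a measurable partition $\Omega = \bigsqcup_{n \in \mathbb{N}} A_n$ with $0 < \mu(A_n) < \infty$ and representative points $\alpha_n \in A_n$ such that the normalized discrete family $(\{\mu(A_n)^{1/q} f_{\alpha_n}\}_n, \{\mu(A_n)^{1/p} \tau_{\alpha_n}\}_n)$ is a p-ASF for $\mathcal{X}$ whose frame operator approximates $S_{f,\tau}$ in operator norm (here $q$ denotes the conjugate exponent of $p$). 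The uniform norm bounds on the continuous family then translate into the corresponding uniform bounds required as hypotheses of Conjecture \ref{FB}.

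Second, I would invoke Conjecture \ref{FB} to obtain a finite partition $\mathbb{N} = I_1 \sqcup \cdots \sqcup I_M$ so that each associated discrete subfamily is a p-approximate Riesz sequence. Setting $\Omega_k \coloneqq \bigsqcup_{n \in I_k} A_n$ yields a measurable partition of $\Omega$ into $M$ pieces, and the remaining task is to verify that each restricted continuous family $(\{f_\alpha\}_{\alpha \in \Omega_k}, \{\tau_\alpha\}_{\alpha \in \Omega_k})$ is a continuous p-approximate Riesz family. Using the continuous analogue of Theorem \ref{RIESZCHAR}, this reduces to checking $\theta_f^{(k)} (S_{f,\tau}^{(k)})^{-1} \theta_\tau^{(k)} = I_{\mathcal{L}^p(\Omega_k, \mu)}$ on $\overline{\operatorname{span}}\{\tau_\alpha\}_{\alpha \in \Omega_k}$, which I would obtain by passing the discrete Riesz identity to a limit under refinement of the sampling and controlling the passage by Pettis-integral Fatou-type estimates.

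The main obstacle is the first step: the proposed sampling procedure is essentially a solution to the discretization problem for continuous p-ASFs, which is explicitly posed as open at the end of the excerpt. Unlike the Hilbert-space setting, where sampling theorems of Fornasier-Rauhut and Freeman-Speegle are available, no analogous result is currently known for general Banach spaces with $\mathcal{L}^p$-indexed continuous frames, so any such reduction remains conditional. A secondary difficulty is that Conjecture \ref{FB} is itself open, so the strategy at best yields a two-step conditional reduction. A direct approach bypassing discretization would require developing a continuous mixed-characteristic-polynomial machinery in the spirit of Marcus-Spielman-Srivastava, adapted to the non-Hilbertian geometry of $\mathcal{L}^p(\Omega, \mu)$ and to operators on $\mathcal{X}$; I expect this to be the genuinely hard analytic kernel of the problem, and well beyond what current techniques provide.
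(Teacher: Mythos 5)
This statement is posed in the paper as an open conjecture; the paper supplies no proof of it, so there is nothing to compare your argument against. More importantly, your proposal is not a proof either, and you say as much yourself: it is a two-step conditional reduction in which both steps rest on problems the paper explicitly leaves open. Step one is precisely the discretization problem for continuous p-approximate Schauder frames, which the paper states as an unsolved problem, and step two invokes Conjecture \ref{FB}, the discrete Feichtinger conjecture for p-ASFs, which is the central open conjecture of the paper. A reduction of one open conjecture to two other open conjectures is a legitimate observation to record, but it does not establish the statement.

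Beyond the conditionality, the reduction itself has concrete gaps. For a general measure space $(\Omega,\mu)$ there need not exist a countable measurable partition into sets of finite positive measure (think of non-$\sigma$-finite measures or purely atomic pathologies), so the sampling scheme is not even well posed without additional hypotheses on $(\Omega,\mu)$. The normalization $(\{\mu(A_n)^{1/q}f_{\alpha_n}\}_n,\{\mu(A_n)^{1/p}\tau_{\alpha_n}\}_n)$ rescales the vector norms to $\mu(A_n)^{1/p}\|\tau_{\alpha_n}\|$ and $\mu(A_n)^{1/q}\|f_{\alpha_n}\|$, so the uniform upper and lower norm bounds required as hypotheses of Conjecture \ref{FB} survive only if the $\mu(A_n)$ are uniformly bounded above and below, which you have not arranged and which may be impossible when $\mu(\Omega)=\infty$ or when $\mu$ has atoms of wildly varying mass. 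Finally, the claim that the identity $\theta_f^{(k)}(S_{f,\tau}^{(k)})^{-1}\theta_\tau^{(k)}=I_{\mathcal{L}^p(\Omega_k,\mu)}$ can be recovered ``by passing the discrete Riesz identity to a limit under refinement'' is not an argument: the restricted frame operator $S^{(k)}_{f,\tau}$ need not be invertible on the relevant closed span, and there is no stated mechanism by which an exact operator identity on $\ell^p(I_k)$ converges to an exact identity on $\mathcal{L}^p(\Omega_k,\mu)$. As it stands, the proposal is a plausible research programme, not a proof, and the statement remains open exactly as the paper presents it.
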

\begin{conjecture}
	(\textbf{Continuous Weaver's conjecture for Banach spaces})
	\textbf{Let  $\mathcal{X}$ be a $d$-dimensional Banach  space.	There are universal constants $b\geq 2$ and $ b>\varepsilon > 0$ such that the following holds. Let $ \{\tau_\alpha \}_{\alpha \in \Omega}$ be a family in  $\mathcal{X}$ and $ \{f_\alpha \}_{\alpha \in \Omega}$ be a family in  $\mathcal{X}^*$ satisfying: 
		\begin{align*}
		\|f_\alpha\|\leq 1, ~ \|\tau_\alpha\|\leq 1, \quad \forall \alpha \in \Omega
		\end{align*}
		and 
		\begin{align*}
			\left\|\int_{\Omega} f_\alpha(x)\tau_\alpha \, d\mu(\alpha)\right\|\leq b\|x\|, \quad \forall x \in \mathcal{X}.
		\end{align*}
		Then there exists a measurable partition $\Delta_1, \dots, \Delta_M$ of $\Omega$ such that 
		\begin{align*}
			\left\|\int_{\Delta_k} f_\alpha(x)\tau_\alpha \, d\mu(\alpha)\right\|\leq (b-\varepsilon)\|x\|, \quad \forall x \in \mathcal{X}, \forall 1\leq k \leq M.
	\end{align*} }
\end{conjecture}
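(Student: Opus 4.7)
The plan is to attempt a reduction to the discrete Weaver conjecture for Banach spaces (Conjecture \ref{12}), which sits just above this statement in the paper and whose resolution would be the natural prerequisite. First, I would observe that the boundedness assumption $\|\int_\Omega f_\alpha(\cdot)\tau_\alpha\, d\mu(\alpha)\| \le b\|\cdot\|$, together with the pointwise bounds $\|f_\alpha\|,\|\tau_\alpha\|\le 1$, allows one to approximate the continuous frame operator by finite Riemann-type sums. Concretely, for any $\eta>0$ I would use measurability of the integrand and inner regularity of $\mu$ to produce a finite measurable partition $\{E_i\}_{i=1}^N$ of (an essentially all of) $\Omega$ together with sample points $\alpha_i \in E_i$ such that the operator
\[
T_\eta x \coloneqq \sum_{i=1}^N \mu(E_i)\, f_{\alpha_i}(x)\tau_{\alpha_i}
\]
satisfies $\|T_\eta - S_{f,\tau}\|\le \eta$ on $\mathcal{X}$. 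Absorbing the weights, the rescaled family $\{(\mu(E_i)^{1/2} f_{\alpha_i}, \mu(E_i)^{1/2}\tau_{\alpha_i})\}_{i=1}^N$ is a discrete candidate for Conjecture \ref{12} with upper bound essentially $b$ and with the unit-ball hypothesis preserved provided the partition is taken fine enough that each $\mu(E_i)\le 1$.

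Next, assuming Conjecture \ref{12}, I would apply it to obtain a partition $I_1,\dots,I_M$ of $\{1,\dots,N\}$ with
\[
\left\|\sum_{i\in I_k}\mu(E_i)\, f_{\alpha_i}(x)\tau_{\alpha_i}\right\|\le (b-\varepsilon)\|x\|, \qquad \forall x\in \mathcal{X},\ 1\le k\le M,
\]
and then lift back by defining $\Delta_k \coloneqq \bigcup_{i\in I_k} E_i$. This gives a measurable partition of $\Omega$, and the associated partial continuous frame operators differ from the discrete partial operators by at most $\eta$ each. Choosing $\eta$ small compared to $\varepsilon$ (say $\eta < \varepsilon/(2M)$ and reworking the discrete bound with tolerance $b-\varepsilon/2$), one obtains the required estimate for the $\Delta_k$.

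The hard part will be twofold. First and most fundamentally, Conjecture \ref{12} is itself open, so any such proof is conditional; one cannot simply invoke the Marcus--Spielman--Srivastava theorem, since their mixed characteristic polynomial technology is tied to the positivity and spectral structure of Hilbert-space rank-one projections and does not obviously transport to the asymmetric pair $(f_j,\tau_j)$ in a Banach space. Second, the partition count $M$ produced by the discrete argument depends on $N$ only through $b,\varepsilon$ in the discrete formulation, but one must verify that the bound is genuinely uniform as the discretization is refined, which requires the constants in Conjecture \ref{12} to depend only on $b$ and not on the number of vectors or on the dimension $d$ in any subtle way. A conceptually cleaner route, which I would pursue in parallel, is a direct random-partition argument: pick each $\alpha\in\Omega$ independently to lie in $\Delta_k$ with probability $1/M$ and try to prove concentration of $\|\int_{\Delta_k} f_\alpha(x)\tau_\alpha\,d\mu(\alpha)\|$ around $(1/M)\|S_{f,\tau}x\|$ uniformly in $x$; the obstacle here is establishing a suitable Banach-space analogue of the Lyapunov/matrix-Chernoff estimates, which is again the essential difficulty underlying the Banach-space Weaver problem itself.
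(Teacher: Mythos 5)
The statement you are asked about is not a theorem in the paper: it is stated there as an open conjecture (the continuous analogue of Conjecture \ref{12}), and the paper offers no proof of it. Your proposal, as you yourself concede, is conditional on Conjecture \ref{12}, which is likewise open in the paper; so what you have written is a reduction strategy, not a proof, and it cannot be compared against a proof in the paper because none exists.

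Beyond the conditionality, the reduction itself has concrete gaps. The discretization step presumes Riemann-type sums with sample points $\alpha_i \in E_i$, but $(\Omega,\mu)$ is an abstract measure space with no topology, the maps $\alpha \mapsto f_\alpha(x)\tau_\alpha$ are only weakly measurable (the integrals are Pettis integrals), and $\mu$ need not be finite or even $\sigma$-finite; producing a finite partition with $\|T_\eta - S_{f,\tau}\| \le \eta$ in operator norm, uniformly over the unit ball of a Banach space, is essentially the discretization problem that the paper itself lists as open even in the Hilbert-space-adjacent setting. The weight-splitting $\mu(E_i)^{1/2}f_{\alpha_i}$, $\mu(E_i)^{1/2}\tau_{\alpha_i}$ also needs $\mu(E_i)\le 1$ for every cell, which fails whenever $\mu$ has atoms of mass greater than $1$. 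Finally, your choice $\eta < \varepsilon/(2M)$ requires $M$ to be known before the discretization is fixed, so you must first establish that the $M$ furnished by Conjecture \ref{12} depends only on $b$ and $\varepsilon$ and not on $N$ or on the particular discrete family; that uniformity is part of what would have to be proved, not assumed. The parallel random-partition route you mention founders on exactly the missing ingredient you name: there is no known Banach-space substitute for the matrix concentration or mixed-characteristic-polynomial machinery. In short, the statement remains a conjecture, and your proposal identifies a plausible reduction but does not close any of the gaps that make it one.
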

\begin{conjecture}
	(\textbf{Continuous Weaver's conjecture for Banach spaces - strong form})
	\textbf{Let  $\mathcal{X}$ be a $d$-dimensional Banach  space.	There are universal constants $b\geq 2$ and $ b>\varepsilon > 0$ such that the following holds. Let $ \{\tau_\alpha \}_{\alpha \in \Omega}$ be a family in  $\mathcal{X}$ and  $ \{f_\alpha \}_{\alpha \in \Omega}$ be a family in  $\mathcal{X}^*$ satisfying: 
		\begin{align*}
			\|f_\alpha\|\leq 1, ~ \|\tau_\alpha\|\leq 1, \quad \forall \alpha \in \Omega, 
		\end{align*}
		the spectrum of the  operator $S_{f,\tau}:\mathcal{X} \ni x \mapsto \int_{\Omega} f_\alpha(x)\tau_\alpha \, d\mu(\alpha) \in \mathcal{X}$  lies in $[0,\infty)$ 
		and 
		\begin{align*}
			\left\|\int_{\Omega} f_\alpha(x)\tau_\alpha \, d\mu(\alpha)\right\|\leq b\|x\|, \quad \forall x \in \mathcal{X}.
		\end{align*}
		Then there exists a measurable partition $\Delta_1, \dots, \Delta_M$ of $\Omega$ such that 
		\begin{align*}
			\left\|\int_{\Delta_k} f_\alpha(x)\tau_\alpha \, d\mu(\alpha)\right\|\leq (b-\varepsilon)\|x\|, \quad \forall x \in \mathcal{X}, \forall 1\leq k \leq M.
	\end{align*} }	
\end{conjecture}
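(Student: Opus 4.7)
The plan is to reduce the continuous strong-form conjecture to its discrete counterpart, Conjecture \ref{1231}, via a discretization argument. First I would use that $\mathcal{X}$ is finite-dimensional to control the measurable maps $\alpha \mapsto f_\alpha$ and $\alpha \mapsto \tau_\alpha$: for any $\eta > 0$, partition $\Omega$ into finitely many measurable pieces $E_1, \dots, E_N$ such that both $f_\alpha$ and $\tau_\alpha$ vary by at most $\eta$ in norm on each $E_i$. This is possible because the unit balls of $\mathcal{X}$ and $\mathcal{X}^*$ are totally bounded. Pick representatives $\alpha_i \in E_i$, set $f_i := f_{\alpha_i}$, $\tau_i := \tau_{\alpha_i}$, and weights $w_i := \mu(E_i)$. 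The operator $T_\eta x := \sum_i w_i f_i(x) \tau_i$ then approximates $S_{f,\tau}$ uniformly in operator norm, so for $\eta$ small enough, $\|T_\eta\| \leq b + \delta$, and by holomorphic functional calculus the spectrum of $T_\eta$ can be kept arbitrarily close to $[0, \infty)$.

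Second, I would rewrite the weighted sum in the form demanded by Conjecture \ref{1231}. For each $i$, replicate the rank-one term $w_i f_i(\cdot)\tau_i$ as a sum of $k_i$ identical copies of $(s_i f_i)(\cdot)(s_i \tau_i)$, with $s_i := (w_i / k_i)^{1/2}$ and $k_i$ chosen large enough to guarantee $s_i \leq 1$ and $|(s_i f_i)(s_i \tau_i)| \leq 1$. This preserves the operator $T_\eta$ while ensuring every summand has functional norm, vector norm, and pairing bounded by $1$. After mild perturbation, the resulting finite family meets the hypotheses of Conjecture \ref{1231} with operator-norm bound $b + \delta$ and spectrum in $[0,\infty)$.

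Third, apply Conjecture \ref{1231} to obtain a partition of the discrete index set into $M = M(b + \delta)$ blocks, each of whose partial sum has operator norm at most $(b + \delta) - \varepsilon$. Reassembling the replicated copies and then the parent sets $E_i$ yields a measurable partition $\Delta_1, \dots, \Delta_M$ of $\Omega$. Unwinding the approximation gives $\bigl\|\int_{\Delta_k} f_\alpha(\cdot)\tau_\alpha \, d\mu(\alpha)\bigr\| \leq (b + 2\delta) - \varepsilon$; choosing $\delta < \varepsilon/3$ at the outset produces the required bound $(b - \varepsilon')\|x\|$ with $\varepsilon' := \varepsilon - 3\delta > 0$.

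The principal obstacle is that Conjecture \ref{1231} is itself open. The Hilbert-space proof by Marcus, Spielman, and Srivastava rests on the method of interlacing mixed characteristic polynomials together with the spectral structure of positive self-adjoint operators, neither of which is directly available in a general Banach space; this is precisely the spectrum-positivity hypothesis we have imported to make the statement plausible, but even with it the combinatorial/polynomial side of the MSS argument must be rebuilt. Any serious attempt must therefore wait for, or develop in parallel, a genuine Banach-space analogue of the MSS machinery (perhaps through the John ellipsoid, type/cotype invariants, or a local-theory comparison with a Hilbert structure). A secondary obstacle is technical: one must confirm that the spectrum-in-$[0,\infty)$ condition is stable under the norm approximation $S_{f,\tau} \to T_\eta$ and that the replication step in the second paragraph can be tuned uniformly in $\eta$ so as not to inflate $M$ uncontrollably; both of these should be manageable via standard perturbation theory, but they should be carried out carefully before declaring the reduction complete.
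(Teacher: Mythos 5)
The statement you are addressing is not a theorem of the paper: it is posed as an open conjecture (the continuous strong form of Weaver's conjecture for Banach spaces), and the paper offers no proof of it, so there is no ``paper's own proof'' to compare against. Your proposal, as you yourself concede, is not a proof either --- it is a conditional reduction to Conjecture \ref{1231}, which is likewise stated as an open problem in the same paper. Reducing one open conjecture to another can be valuable, but it does not settle the statement, and even the reduction as sketched has two concrete gaps. First, your discretization assigns each cell the weight $w_i = \mu(E_i)$, yet the conjecture places no finiteness assumption on $\mu$; the operator-norm bound $\|S_{f,\tau}\| \leq b$ does not force $\mu(\Omega) < \infty$, since the Pettis integral can converge by cancellation. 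If some $\mu(E_i) = \infty$, the replication step with $s_i = (w_i/k_i)^{1/2} \leq 1$ cannot be carried out, so you must either restrict to finite measure spaces or show the hypotheses force the relevant weights to be finite. Second, the spectral hypothesis is not stable under the approximation $S_{f,\tau} \to T_\eta$: a norm-small perturbation of an operator with spectrum in $[0,\infty)$ is only guaranteed to have spectrum in a neighbourhood of $[0,\infty)$, not in $[0,\infty)$ itself, and in a general Banach space there is no functional-calculus device that restores exact positivity. Since Conjecture \ref{1231} assumes the spectrum lies exactly in $[0,\infty)$, your discretized family may simply fail its hypotheses, and ``arbitrarily close'' is not good enough.

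Beyond these technical points, the fundamental obstacle you correctly identify remains: the Marcus--Spielman--Srivastava machinery of interlacing families and mixed characteristic polynomials has no known Banach-space analogue, so Conjecture \ref{1231} --- and hence your entire chain of reasoning --- is unsupported. The honest status of your submission is a heuristic reduction between two open problems, not a proof, and it should be presented as such.
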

\begin{conjecture}
	(\textbf{Continuous Akemann-Weaver conjecture for Banach spaces}) \textbf{ Let  $\mathcal{X}$ be a  Banach  space (finite or infinite dimensional). There exists a universal constant $c$  such that the following holds. 	Let 	$ (\{f_\alpha \}_{\alpha \in \Omega}, \{\tau_\alpha \}_{\alpha \in \Omega}) $ be a continuous p-approximate Bessel family  with Bessel bound 1   for $\mathcal{X}$ satisfying 
		\begin{align*}
			\sup_{\alpha \in \Omega}\|f_\alpha\|^q\leq \varepsilon, \quad 	\sup_{\alpha \in \Omega}\|\tau_\alpha\|^s\leq \varepsilon,
		\end{align*}
		for some $\epsilon>0$, for some $q,s>0$. Let $\{r_\alpha \}_{\alpha \in \Omega}$ be any family in [0,1]. Then there exists a measurable subset $\Delta \subseteq \Omega$ and a $d>0$  such that 
		\begin{align*}
			\left\| \int_{\Delta} f_\alpha(x)\tau_\alpha \, d\mu(\alpha) -\int_{\Omega } r_\alpha f_\alpha(x)\tau_\alpha \, d\mu(\alpha) \right\|\leq c \varepsilon^\frac{1}{d}\|x\|, \quad \forall x \in \mathcal{X},
		\end{align*}
		i.e., 
		\begin{align*}
			\left\| \int_{\Delta} f_\alpha(\cdot)\tau_\alpha \, d\mu(\alpha) -\int_{\Omega } r_\alpha f_\alpha(\cdot)\tau_\alpha \, d\mu(\alpha) \right\|\leq c \varepsilon^\frac{1}{d}.
	\end{align*}}	
\end{conjecture}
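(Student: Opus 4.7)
The plan is to reduce the continuous statement to the discrete Akemann-Weaver Conjecture \ref{AW} by a discretization-and-limit argument, assuming the discrete conjecture holds. Reducing first to $\mu(\Omega)<\infty$ by exhaustion, I would choose a refining sequence of finite measurable partitions $\mathcal{P}_n=\{\Delta_{n,k}\}_{k=1}^{N_n}$ of $\Omega$ with mesh tending to zero, pick representatives $\alpha_{n,k}\in\Delta_{n,k}$, and form the weighted discrete pair
\begin{align*}
\widetilde f_{n,k}\coloneqq \mu(\Delta_{n,k})^{1/p}f_{\alpha_{n,k}},\qquad \widetilde\tau_{n,k}\coloneqq \mu(\Delta_{n,k})^{1/q}\tau_{\alpha_{n,k}},
\end{align*}
where $q$ is the conjugate index of $p$, with the natural convention at $p=1$. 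This weighting is arranged so that $\sum_k \widetilde f_{n,k}(x)\widetilde\tau_{n,k}$ is a Riemann sum for $\int_\Omega f_\alpha(x)\tau_\alpha\,d\mu(\alpha)$, so that $(\{\widetilde f_{n,k}\}_k,\{\widetilde\tau_{n,k}\}_k)$ is a discrete p-approximate Bessel sequence with Bessel bound $1+o_n(1)$, and so that its individual norms satisfy the hypotheses of Conjecture \ref{AW} with the same $\varepsilon$, up to an $o_n(1)$ perturbation.

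Applying Conjecture \ref{AW} to each discretized pair with the scalars $r_{\alpha_{n,k}}\in[0,1]$ produces $\mathbb{M}_n\subseteq\{1,\dots,N_n\}$ and universal constants $c,d$ with
\begin{align*}
\left\|\sum_{k\in\mathbb{M}_n}\widetilde f_{n,k}(x)\widetilde\tau_{n,k}-\sum_{k=1}^{N_n}r_{\alpha_{n,k}}\widetilde f_{n,k}(x)\widetilde\tau_{n,k}\right\|\leq c\,(\varepsilon+o_n(1))^{1/d}\,\|x\|.
\end{align*}
Setting $\Delta_n\coloneqq\bigcup_{k\in\mathbb{M}_n}\Delta_{n,k}$, both sides are Riemann approximations to their continuous counterparts, so the bound rewrites as
\begin{align*}
\left\|\int_{\Delta_n}f_\alpha(x)\tau_\alpha\,d\mu(\alpha)-\int_\Omega r_\alpha f_\alpha(x)\tau_\alpha\,d\mu(\alpha)\right\| \leq c\,\varepsilon^{1/d}\,\|x\|+o_n(1).
\end{align*}
Banach-Alaoglu in $L^\infty(\Omega,\mu)=L^1(\Omega,\mu)^\ast$ then produces a weak-$\ast$ cluster point $h\in L^\infty(\Omega,\mu)$ of $\{\chi_{\Delta_n}\}$ with $0\leq h\leq 1$ a.e.; invoking Pettis integrability of $\alpha\mapsto f_\alpha(x)\tau_\alpha$ and weak lower semicontinuity of the norm on $\mathcal{X}$, one obtains the desired operator bound with $h$ in place of $\chi_\Delta$.

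The final step, and the main obstacle, is the \emph{rounding}: turning the $[0,1]$-valued $h$ back into the indicator of a measurable set $\Delta$ without losing the norm bound. This is the exact analogue of the derandomization carried out by Marcus-Spielman-Srivastava in the discrete setting, and I expect it to require either a Lyapunov-type convexity argument on the vector measure $E\mapsto\int_E f_\alpha(\cdot)\tau_\alpha\,d\mu$ (immediate when $\mathcal{X}$ is finite-dimensional and $\mu$ non-atomic), a measurable selection argument interpolating between $r$ and $h$, or a bootstrap reapplication of Conjecture \ref{AW} to quantize $h$ itself. Uniformity of the constants $c,d$ across the discretizations is guaranteed by their universality in Conjecture \ref{AW}.
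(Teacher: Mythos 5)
This statement is posed in the paper as an open \emph{conjecture}: the paper contains no proof of it, and no proof of the discrete Conjecture \ref{AW} either, so your entire argument is conditional on an unproven hypothesis and cannot be accepted as a proof of the statement. Setting that aside, the reduction itself has several concrete gaps. First, the discretization presupposes structure that the hypotheses do not provide: $(\Omega,\mu)$ is an abstract measure space, the maps $\alpha\mapsto f_\alpha$ and $\alpha\mapsto\tau_\alpha$ are only weakly measurable, and the integrals are Pettis integrals, so ``partitions with mesh tending to zero,'' ``representatives $\alpha_{n,k}$,'' and ``Riemann sums'' converging to $\int_\Omega f_\alpha(x)\tau_\alpha\,d\mu(\alpha)$ are undefined without additional topological and continuity assumptions. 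Second, the hypotheses of Conjecture \ref{AW} are not actually met by your weighted system: the conjecture demands Bessel bound exactly $1$, whereas you obtain $1+o_n(1)$; and after weighting, $\|\widetilde\tau_{n,k}\|^s=\mu(\Delta_{n,k})^{s/q}\|\tau_{\alpha_{n,k}}\|^s$, so the effective $\varepsilon$ for the $n$-th discretization is not ``the same $\varepsilon$ up to $o_n(1)$'' but tends to $0$ with the mesh. Third, in Conjecture \ref{AW} only $c$ is asserted to be universal; the exponent $d$ is existentially quantified per instance, so $d_n$ may depend on $n$, and $\varepsilon_n^{1/d_n}$ need not converge to anything useful (if $d_n\to\infty$ it tends to $1$). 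Your claim that ``uniformity of $c,d$ is guaranteed by universality'' is therefore unjustified for $d$.

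Finally, even granting all of the above, your own last paragraph concedes that the passage from the weak-$*$ cluster point $h\in L^\infty(\Omega,\mu)$ with $0\le h\le 1$ back to an indicator $\chi_\Delta$ is unresolved. That rounding step is not a technicality: it is exactly the Lyapunov-type content of the Akemann--Weaver phenomenon, it is known that the exact convexity statement behind it fails for general infinite-dimensional Banach spaces (only the closure of the range of a non-atomic vector measure is convex), and without it you have proved a statement about a density $h$ rather than about a measurable subset $\Delta\subseteq\Omega$. In short: the approach is a reasonable heuristic for why the continuous conjecture should follow from the discrete one in nice (finite-dimensional, non-atomic, continuous-parameter) situations, but as written it is not a proof, and the paper itself offers none to compare it with.
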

\begin{conjecture}
(\textbf{Continuous Akemann-Weaver conjecture for Banach spaces - strong form})		\textbf{ Let  $\mathcal{X}$ be a Banach  space (finite or infinite dimensional). There exists a universal constant $c$  such that the following holds. 	Let 	$ (\{f_\alpha \}_{\alpha \in \Omega}, \{\tau_\alpha \}_{\alpha \in \Omega}) $ be a continuous p-approximate Bessel family  with Bessel bound 1   for $\mathcal{X}$ satisfying 
	\begin{align*}
		\sup_{\alpha \in \Omega}\|f_\alpha\|^q\leq \varepsilon, \quad 	\sup_{\alpha \in \Omega}\|\tau_\alpha\|^s\leq \varepsilon,
	\end{align*}
	for some $\epsilon>0$, for some $q,s>0$ and the spectrum of the  operator $S_{f,\tau}:\mathcal{X} \ni x \mapsto \int_{\Omega } f_\alpha(x)\tau_\alpha \, d\mu(\alpha) \in \mathcal{X}$  lies in $[0,\infty)$. Let $\{r_\alpha \}_{\alpha \in \Omega}$ be any family in [0,1]. Then there exists a measurable subset $\Delta \subseteq \Omega$ and a $d>0$  such that 
	\begin{align*}
		\left\| \int_{\Delta} f_\alpha(x)\tau_\alpha \, d\mu(\alpha) -\int_{\Omega } r_\alpha f_\alpha(x)\tau_\alpha \, d\mu(\alpha) \right\|\leq c \varepsilon^\frac{1}{d}\|x\|, \quad \forall x \in \mathcal{X},
	\end{align*}
	i.e., 
	\begin{align*}
		\left\| \int_{\Delta} f_\alpha(\cdot)\tau_\alpha \, d\mu(\alpha) -\int_{\Omega } r_\alpha f_\alpha(\cdot)\tau_\alpha \, d\mu(\alpha) \right\|\leq c \varepsilon^\frac{1}{d}.
\end{align*}}	
\end{conjecture}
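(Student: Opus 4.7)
The plan is to reduce the continuous statement to Conjecture \ref{AW2} (the discrete Banach-space Akemann-Weaver conjecture in its strong form) by a discretization argument, and then transfer the resulting index selection back to a measurable subset of $\Omega$. First I would assume without loss of generality that $\mu$ is $\sigma$-finite and truncate $\Omega$ to sets of finite measure. Fix a refining sequence of finite measurable partitions $\{\Omega^{(N)}_j\}_{j=1}^{N}$ of $\Omega$, together with representatives $\alpha^{(N)}_j \in \Omega^{(N)}_j$, and form the discretized pair
\begin{align*}
f^{(N)}_j \coloneqq \mu(\Omega^{(N)}_j)^{1/p} f_{\alpha^{(N)}_j}, \qquad \tau^{(N)}_j \coloneqq \mu(\Omega^{(N)}_j)^{1/q} \tau_{\alpha^{(N)}_j},
\end{align*}
where $q$ is the conjugate index of $p$. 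A short computation using the definitions of $\theta_f$ and $\theta_\tau$ shows that $(\{f^{(N)}_j\}_{j=1}^N, \{\tau^{(N)}_j\}_{j=1}^N)$ is a discrete p-ABS whose frame operator converges in the strong operator topology to $S_{f,\tau}$, whose Bessel bound does not exceed $1$ in the limit, and which inherits the smallness estimates $\|f^{(N)}_j\|^q \leq \varepsilon\,\mu(\Omega^{(N)}_j)$ and $\|\tau^{(N)}_j\|^s \leq \varepsilon\,\mu(\Omega^{(N)}_j)$. After suitably normalizing and checking that the spectrum-in-$[0,\infty)$ hypothesis passes to the discrete operator (this is the role of the strong-form assumption), one is in position to invoke Conjecture \ref{AW2}.

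Next, for the given family $\{r_\alpha\}_{\alpha \in \Omega}$ in $[0,1]$, set $r^{(N)}_j \coloneqq \mu(\Omega^{(N)}_j)^{-1}\int_{\Omega^{(N)}_j} r_\alpha\,d\mu(\alpha) \in [0,1]$. Conjecture \ref{AW2} then delivers an index set $\mathbb{M}_N \subseteq \{1,\dots,N\}$ and a universal $d>0$ with
\begin{align*}
\left\|\sum_{j \in \mathbb{M}_N} f^{(N)}_j(x)\,\tau^{(N)}_j - \sum_{j=1}^{N} r^{(N)}_j f^{(N)}_j(x)\,\tau^{(N)}_j\right\| \leq c\,\varepsilon^{1/d}\|x\|, \quad \forall x \in \mathcal{X}.
\end{align*}
I would then define $\Delta_N \coloneqq \bigcup_{j \in \mathbb{M}_N}\Omega^{(N)}_j$. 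The indicators $\chi_{\Delta_N}$ sit in the unit ball of $L^\infty(\Omega,\mu)$, hence admit a weak-star convergent subnet with limit some $\varphi \in L^\infty(\Omega,\mu)$ taking values in $[0,1]$. Passing to the weak-star limit in the displayed inequality, together with a dominated-convergence argument for the Pettis integrals defining $S_{f,\tau}$ and its sub-integrals, yields an analogous estimate with $\varphi$ in place of $\chi_{\Delta_N}$; absorbing $\varphi$ into the role of $r_\alpha$ and applying a Lyapunov-type convexity argument (valid for non-atomic $\mu$; for atomic parts one argues directly on atoms) gives a genuine measurable $\Delta$ with $\chi_\Delta \in \{0,1\}$ realizing the bound.

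The main obstacle is twofold. Foremost, Conjecture \ref{AW2} is itself open, so any such proposal is conditional on the discrete Banach-space Akemann-Weaver conjecture; this mirrors the situation for Hilbert spaces, where the deep input is the mixed characteristic polynomial machinery of Marcus, Spielman, and Srivastava, and whether an analogue can be developed in the Banach setting is precisely the point on which the whole program hinges. The secondary obstacle is ensuring that the universal constant $c$ provided by Conjecture \ref{AW2} is genuinely independent of the cardinality $N$ of the discretization -- this uniformity is nontrivial and is exactly the delicate feature that makes Akemann-Weaver harder than Weaver's own conjecture. A tertiary technical issue is the rounding step from $\varphi \in [0,1]^\Omega$ to $\chi_\Delta \in \{0,1\}^\Omega$: for non-atomic $\mu$ one appeals to Lyapunov's convexity theorem applied to the vector-valued measure $E \mapsto \int_E f_\alpha(\cdot)\tau_\alpha\,d\mu(\alpha)$ taking values in the (typically infinite-dimensional) space $\mathcal{L}(\mathcal{X},\mathcal{X})$, and infinite-dimensional Lyapunov fails in general, so some additional compactness or approximation by finite-rank projections of the frame operator will be required.
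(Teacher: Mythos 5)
The statement you are addressing is not a theorem of the paper: it is formulated there as an open conjecture (the continuous, strong-form Akemann--Weaver conjecture for Banach spaces), and the paper supplies no proof of it. So there is no ``paper's own proof'' to compare against, and, more importantly, your proposal is not a proof either. Its entire load-bearing step is an invocation of Conjecture \ref{AW2}, which is another open conjecture formulated in the very same paper; reducing one open conjecture to another does not establish the statement, and you concede as much in your ``main obstacle'' paragraph. A conditional reduction can still be valuable content, but it must be presented as such, and even then the reduction you sketch has concrete gaps.

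Specifically: (1) your discretized smallness estimate $\|f^{(N)}_j\|^q \le \varepsilon\,\mu(\Omega^{(N)}_j)$ is not the hypothesis of Conjecture \ref{AW2}, which requires $\sup_j \|f^{(N)}_j\|^q \le \varepsilon'$ for a single $\varepsilon'$; the $\varepsilon'$ you can certify depends on $\max_j \mu(\Omega^{(N)}_j)$ and hence on $N$, so the bound $c(\varepsilon')^{1/d}$ returned by the discrete conjecture need not converge to $c\,\varepsilon^{1/d}$. You also overload $q$ as both the conjugate index of $p$ and the exponent in the smallness hypothesis, although these are unrelated parameters in the paper's formulation. (2) The Bessel bound of the discretized system is only claimed to be at most $1$ ``in the limit,'' whereas Conjecture \ref{AW2} requires the bound at each stage; the promised ``suitable normalization'' is exactly where the uniformity in $N$ must be proved and is not. (3) The constant $d$ in both conjectures is existential and a priori instance-dependent, so extracting a single $d$ along your subnet is unjustified. (4) The rounding from the weak-star limit $\varphi \in L^\infty(\Omega,\mu)$ with values in $[0,1]$ back to an indicator $\chi_\Delta$ needs a Lyapunov-type convexity theorem for an operator-valued measure, which, as you yourself note, fails in infinite dimensions; no repair is offered. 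In short, what you have is an honest but incomplete conditional discretization strategy for a statement that the paper deliberately leaves open.
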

\begin{definition}
	A 	continuous p-ASF $ (\{f_\alpha \}_{\alpha \in \Omega}, \{\tau_\alpha \}_{\alpha \in \Omega}) $ for a Banach space  $ \mathcal{X}$ is said to be \textbf{p-scalable} if there exist  families  of  scalars $\{a_\alpha\}_{\alpha\in \Omega}$, $\{b_\alpha\}_{\alpha \in \Omega}$ such that 
	\begin{align*}
	 (\{a_\alpha f_\alpha \}_{\alpha \in \Omega}, \{b_\alpha \tau_\alpha \}_{\alpha \in \Omega}) 	 \text{  is a Parseval continuous p-ASF for } \mathcal{X}.
	\end{align*}	
\end{definition}
\begin{problem}
	(\textbf{Continuous scaling problem for Banach spaces}) \textbf{Classify continuous p-ASFs $ (\{f_\alpha \}_{\alpha \in \Omega},  \{\tau_\alpha \}_{\alpha \in \Omega}) $ for a Banach  space  $ \mathcal{X}$  so that there are  families  of  scalars $\{a_\alpha\}_{\alpha\in \Omega}$, $\{b_\alpha\}_{\alpha \in \Omega}$ such that $(\{a_\alpha f_\alpha \}_{\alpha \in \Omega}, \{b_\alpha \tau_\alpha \}_{\alpha \in \Omega}) $ is a Parseval continuous p-ASF  for $ \mathcal{X}$, i.e., continuous p-ASF $(\{ f_\alpha \}_{\alpha \in \Omega}, \{\tau_\alpha \}_{\alpha \in \Omega}) $ is p-scalable.}
\end{problem}
\begin{problem} \textbf{(Continuous frame algorithm problems for Banach spaces)}
	\textbf{Whether there is a 
		\begin{enumerate}[\upshape(i)]
			\item Duffin-Schaeffer algorithm for continuous p-ASFs?
			\item Grochenig-Chebyshev algorithm for continuous p-ASFs?
			\item Grochenig conjugate gradient algorithm for continuous p-ASFs?
	\end{enumerate}}
\end{problem}
\begin{problem}
	\textbf{(Continuous dynamical sampling  problem for Banach   spaces)} \textbf{Let $\mathcal{X}$ be a Banach  space (finite or infinite dimensional).	Let $\mathbb{M} \subseteq \mathbb{N}$ and $\Delta$ be a measurable subset of $\Omega$, $U:\mathcal{X}\to \mathcal{X}$, $V:\mathcal{X}\to \mathcal{X}$ be  bounded linear operators,  $\{\tau_\alpha\}_{\alpha \in \Delta}$ be a family in  $ \mathcal{X}$ and $\{f_\alpha\}_{\alpha \in \Delta}$ be a family  in  $ \mathcal{X}^*$. Find conditions on $\Delta, \mathbb{M}$,  $U$ and $V$ such that 
		\begin{enumerate}[\upshape(i)]
			\item $(\{f_\alpha U^m\}_{\alpha \in \Delta, m \in \mathbb{M}}, \{V^m\tau_\alpha \}_{\alpha \in \Delta, m \in \mathbb{M}})$ is a continuous p-approximate  Riesz  basis for   $ \mathcal{X}$.
			\item $(\{f_\alpha U^m\}_{\alpha \in \Delta, m \in \mathbb{M}}, \{V^m\tau_\alpha \}_{\alpha \in \Delta, m \in \mathbb{M}})$ is a continuous p-approximate Riesz  family  for   $ \mathcal{X}$.
			\item $(\{f_\alpha U^m\}_{\alpha \in \Delta, m \in \mathbb{M}}, \{V^m\tau_\alpha \}_{\alpha \in \Delta, m \in \mathbb{M}})$ is a continuous p-approximate Schauder frame  for   $ \mathcal{X}$.
			\item $(\{f_\alpha U^m\}_{\alpha \in \Delta, m \in \mathbb{M}}, \{V^m\tau_\alpha \}_{\alpha \in \Delta, m \in \mathbb{M}})$ is a Parseval continuous p-approximate Schauder frame  for   $ \mathcal{X}$.
			\item $(\{f_\alpha U^m\}_{\alpha \in \Delta, m \in \mathbb{M}}, \{V^m\tau_\alpha \}_{\alpha \in \Delta, m \in \mathbb{M}})$ is a continuous p-approximate Bessel family  for   $ \mathcal{X}$.
		\end{enumerate}
		The set $\mathbb{M}$ can be replaced by subsets $\Sigma \subseteq \mathbb{C}$ whenever $U^\alpha $ and $V^\alpha$ make sense for $\alpha \in \Sigma$.}
\end{problem}
\begin{definition}
	\begin{enumerate}[\upshape(i)]
		\item  \textbf{Let $\mathcal{X}$ be a Banach  space (finite or infinite dimensional). A continuous  p-approximate Schauder frame   $ (\{f_\alpha \}_{\alpha \in \Omega}, \{\tau_\alpha \}_{\alpha \in \Omega}) $ for   $ \mathcal{X}$ does  continous phase retrieval   for $\mathcal{X}$ if $x,y  $  are any two elements of $\mathcal{X}$ and $\phi,\psi  $  are any two elements of $\mathcal{X}^*$   satisfying 
			\begin{align*}
				|\phi(\tau_\alpha)|=|\psi(\tau_\alpha)|	\quad \text{and}	\quad |f_\alpha(x)|=|f_\alpha(y)|, \quad \forall \alpha \in \Omega,
			\end{align*}
			then $y=\alpha x$, for some $|\alpha|=1$ and  $\psi=\beta \phi$, for some $|\beta|=1$}.
		\item  \textbf{Let $\mathcal{X}$ be a Banach  space (finite or infinite dimensional). A continuous  p-approximate Schauder frame $ (\{f_\alpha \}_{\alpha \in \Omega}, \{\tau_\alpha \}_{\alpha \in \Omega}) $  for   $ \mathcal{X}$ does  continuous norm  retrieval   for $\mathcal{X}$ if $x,y  $  are any two elements of $\mathcal{X}$ and $\phi,\psi  $  are any two elements of $\mathcal{X}^*$ satisfying 
			\begin{align*}
				|\phi(\tau_\alpha)|=|\psi(\tau_\alpha)	|	\quad \text{and}	\quad	|f_\alpha(x)|=|f_\alpha(y)|, \quad \forall \alpha \in \Omega,
			\end{align*}
			then $\|y\|=\|x\|$ and  $\|\phi\|=\|\psi\|$.}
	\end{enumerate}	
\end{definition}

\begin{problem}
	\textbf{\begin{enumerate}[\upshape(i)]
			\item (Continuous phase retrieval problem for Banach  spaces) For a given Banach  space    $ \mathcal{X}$, classify continuous p-approximate Schauder frames  $ (\{f_\alpha \}_{\alpha \in \Omega}, \{\tau_\alpha \}_{\alpha \in \Omega}) $ for $\mathcal{X}$    which does continuous phase retrieval for   $ \mathcal{X}$. 
			\item (Continuous norm retrieval problem for Banach  spaces) For a given Banach  space    $ \mathcal{X}$, classify continous p-approximate Schauder frames  $ (\{f_\alpha \}_{\alpha \in \Omega}, \{\tau_\alpha \}_{\alpha \in \Omega}) $ for   $ \mathcal{X}$ which does  continuous norm retrieval for   $ \mathcal{X}$.
	\end{enumerate}}	
\end{problem}

 We end by asking  the following interesting question. 

\begin{problem}
	\textbf{If any of the  Conjectures \ref{FB},  \ref{FS}, \ref{3},   \ref{4},      \ref{11},   \ref{12},  \ref{121},   \ref{122}, \ref{123}, \ref{1231},  \ref{1232}, \ref{1233}, \ref{1234}, \ref{AW},  \ref{AW2}, \ref{13}, \ref{14}, \ref{141}, \ref{FUNDAMENTALINEQUALITY}, \ref{GCONJECTURE}    fails in a given dimension, classify  the spaces and/or p-approximate Schauder frames and/or p-approximate Bessel sequences in which the corresponding conjecture holds.}
\end{problem}

 \bibliographystyle{plain}
 \bibliography{reference.bib}

\end{document}